\title{One-dimensional long-range diffusion-limited aggregation I}
\author{Gideon Amir\thanks{University of Toronto}
  \and Omer Angel\thanks{University of British Columbia}
  \and Itai Benjamini\thanks{Weizmann Institute of Science}
  \and Gady Kozma\footnotemark[3]}
\date{October 2009}
\newtheorem{mainthm}{Theorem}
\newtheorem{thm}{Theorem}[section]
\newtheorem{lemma}[thm]{Lemma}
\newtheorem{coro}[thm]{Corollary}
\newtheorem{prop}[thm]{Proposition}
\newtheorem{claim}[thm]{Claim}
\newtheorem*{conj*}{Conjecture}
\theoremstyle{definition}
\newtheorem{defn}[thm]{Definition}
\newtheorem*{defn*}{Definition}
\theoremstyle{remark}
\newtheorem*{rem*}{Remark}
\newcommand{\thmref}[1]{Theorem~\ref{T:#1}}
\newcommand{\lemref}[1]{Lemma~\ref{L:#1}}
\newcommand{\corref}[1]{Corollary~\ref{C:#1}}
\newcommand{\clmref}[1]{Claim~\ref{C:#1}}
\newcommand{\ep}{\varepsilon}
\newcommand{\eps}{\varepsilon}
\renewcommand{\P}{\mathbb{P}}
\newcommand{\E}{\mathbb{E}}
\newcommand{\Z}{\mathbb{Z}}
\newcommand{\slt}{\preceq}
\newcommand{\F}{\mathcal{F}}
\DeclareMathOperator{\capa}{Cap}
\DeclareMathOperator{\diam}{diam}
\newcommand{\dist}{d}
\gdef\SetFigFontNFSS#1#2#3#4#5{} 
\begin{document}
\maketitle

\begin{abstract}
  We examine diffusion-limited aggregation generated by a random walk on
  $\Z$ with long jumps. We derive upper and lower bounds on the growth rate
  of the aggregate as a function of the number of moments a single step of
  the walk has. Under various regularity conditions on the tail of the step
  distribution, we prove that the diameter grows as $n^{\beta+o(1)}$, with
  an explicitly given $\beta$. The growth rate of the aggregate is shown to
  have three phase transitions, when the walk steps have finite third
  moment, finite variance, and, conjecturally, finite half moment.
\end{abstract}

\section{Introduction}

Start with a single seed particle fixed in space. Bring a second particle
from infinity, doing a random walk. Once it hits the first particle, freeze
it at the last place it visited before hitting the first particle. Bring a
third particle and freeze it when it hits the existing particles. Repeat,
and watch the aggregate grow. This process, known as diffusion-limited
aggregation, DLA for short, was suggested by physicists Witten and Sander
\cite{WS81} when the space is $\Z^2$. They ran simulations with several
thousand particles and discovered that a random fractal ensues. The
elegance of the model immediately caught the eyes of both physicists and
mathematicians.

However, very little has been proven about this model rigorously.
Eberz-Wagner \cite{EW} has some results about local statistics of the
aggregate. Kesten \cite{K87,K90,K91} proved non-trivial upper bounds for
the growth rate, but these do not demonstrate the fractal nature of the
model. Various simplified models have been suggested, but the fractal
nature of the aggregate is at best partially replicated. DLA on a cylinder
was shown to have a fingering phenomenon, when the base of the
cylinder mixes sufficiently rapidly \cite{BY08} (see also \cite{BH08}). In
the superficially similar \emph{internal} DLA, a process where the
particles start from $0$, walk on the aggregate and are glued at their point
of departure, the limiting shape is a ball \cite{LBG92,L95,LP,Sh}. A
similar phenomenon happens for the Richardson model, where the position of
the glued particle is picked from the uniform measure on the boundary of
the aggregate. Here the limit shape is some (unknown) convex shape which is a
far cry from being a fractal \cite{R73,NP95}. DLA on trees requires one to
adjust the parameters in order to get a ``fingering'' phenomenon
\cite{BPP97}. See also \cite{B93,K91,AAK01,NT08} and the fascinating
deterministic analog, the Hele-Shaw flow \cite{CM01,HM}.

In this paper we study one-dimensional \emph{long-range} DLA. The random
walk of the particles has unbounded long jumps. When such a long jump lands on
a site already in the aggregate the jump is not performed and the particle
is glued in its current position. Thus we deviate from the view of DLA as a
connected aggregate, but that is of course necessary to have an interesting
aggregate in one dimension. (As a particle system there are interesting
problems even in the connected one-dimensional case, see \cite{KS08}.)
One-dimensional long-range models have been studied for various questions
e.g.\ for percolation, the Ising model and others
\cite{S83,NS86,AN86,IN88,B04,BBY08}. Such models frequently exhibit
interesting phenomenology, reminiscent of the behaviour in $\Z^d$ but
different from it. In particular there is no canonical correspondence
between the dimension $d$ and the strength of the long-range interactions.

It is time to state our results (precise definitions will be given in
Chapter~\ref{sec:defs}). We say that a random variable $\xi$ has $\alpha$
moments if
\[
\alpha:=\sup\{a\ge 0 : \E |\xi|^a < \infty\}.
\]
A random walk $\{R_n\}$ has $\alpha$ moments if its step distribution does.
In particular if we have $\P(|R_1-R_0|=k)=k^{-1-\alpha+o(1)}$ then $R$ has
$\alpha$ moments. Our results focus on the effect of $\alpha$ on the growth
rate of the DLA generated by the random walk. A minimal form of our main
result is as follows.

\begin{mainthm}\label{T:all}
  Let $R$ be a symmetric random walk on $\Z$ with step distribution
  satisfying $\P(|R_1-R_0|=k) \sim c k^{-1-\alpha}$. Let $D_n$ be the
  diameter of the $n$ particle aggregate. Then almost surely:
  \begin{itemize}
  \item If $\alpha>3$, then $n-1 \le D_n \le Cn+o(n)$, where $C$ is a
    constant depending only on the random walk.
  \item If $2<\alpha\le 3$, then $D_n=n^{\beta+o(1)}$, where $\beta =
    \frac2{\alpha-1}$.
  \item If $1<\alpha<2$ then $D_n=n^{2+o(1)}$.
  \item If $\frac{1}{3}<\alpha<1$ then
    \[
    n^{\beta+o(1)} \le D_n \le n^{\beta'+o(1)}
    \]
    where $\beta=\max(2,\alpha^{-1})$ and $\beta' =
    \frac{2}{\alpha(2-\alpha)}$.
  \item If $0<\alpha<\frac{1}{3}$ then $D_n=n^{\beta+o(1)}$, where
    $\beta=\alpha^{-1}$.
  \end{itemize}
\end{mainthm}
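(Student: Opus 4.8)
The plan is to split the main theorem into one-sided growth bounds, each established under a separate regularity hypothesis on the tail $\P(|R_1-R_0|>k)$; the hypothesis $\P(|R_1-R_0|=k)\sim ck^{-1-\alpha}$ satisfies all of them, so the theorem follows by assembling the pieces. Before anything else the process has to be defined: for $\alpha\ge1$ the walk is recurrent, every particle is absorbed almost surely, and one checks that the law of the absorption site has a limit as the release point is sent to $\infty$; for $\alpha<1$ the walk is transient and ``from infinity'' must mean the walk conditioned to be absorbed, i.e.\ the $h$-transform by $x\mapsto\P_x(\text{hit }A_n)$, and one must verify this conditioning stabilises. Write $A_n$ for the aggregate after $n$ particles, $r_n=\max A_n$, $\ell_n=\min A_n$, $D_n=r_n-\ell_n$; by the left--right symmetry it suffices to track $r_n$ together with the overshoot $\Delta r_n=(\text{absorption site of particle }n{+}1)-r_n$ (which is $\le0$ if that particle lands within or to the left of $A_n$).

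The core of the argument is an estimate for the conditional law of $\Delta r_n$ given $A_n$ --- the harmonic measure from infinity of the half-line to the right of $r_n$. I would obtain it from a scale-by-scale analysis of the (conditioned) walk. When the particle sits at distance $\asymp w$ to the right of $r_n$, a single step lands it in $A_n$ with probability $p(w)\asymp\sum_{a\in A_n}(w+r_n-a)^{-1-\alpha}$, while the number of steps it spends at scale $w$ before the walk reaches scale $2w$ or $w/2$ is governed by the walk's own exponent: $\asymp w^{2}$ when $\alpha>2$ (finite variance) and $\asymp w^{\alpha}$ when $\alpha<2$ (stable scaling). Multiplying gives a per-visit absorption probability $\asymp w^{2\wedge\alpha}p(w)$, and the telescoping product of the complementary probabilities over dyadic scales descending from $+\infty$ yields $\P(\Delta r_n\ge w\mid A_n)$. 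The three transitions in the theorem are exactly the three regimes of this product: $\alpha=3$ is where $\E[\Delta r_n\mid A_n]$ ceases to be bounded, $\alpha=2$ is where the walk's scaling exponent changes, and (conjecturally) $\alpha=\tfrac12$ is where the conditioned transient walk's recurrence-type behaviour flips. For a set of $n$ roughly-equally-spaced points in an interval of length $D$ this computation produces, away from the transient regime, the clean consequence that a given particle enlarges the diameter by a constant factor with probability $\approx n\,D^{(2\wedge\alpha)-\alpha-1}$ up to $n^{o(1)}$.

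Both bounds then follow by a renewal argument along a dyadic sequence of diameters. For the lower bound: while $D_n\asymp D$ the number of particles added is at most $n^{o(1)}$ times the reciprocal of the above ``doubling'' probability, so the diameter doubles within that many steps with high probability; summing the resulting geometric series and imposing self-consistency $D_n=n^{\beta+o(1)}$ pins down $\beta$ (e.g.\ in $2<\alpha\le3$ one needs $\approx D^{\alpha-1-1/\beta}$ particles to double, forcing $\alpha-1-1/\beta=1/\beta$, i.e.\ $\beta=\tfrac2{\alpha-1}$; in $1<\alpha<2$ the same bookkeeping gives $\beta=2$). In the transient regime this is supplemented by a one-big-jump estimate showing a single particle overshoots by $\ge w$ with probability polynomial in $w$, which is what contributes the $\alpha^{-1}$ in $\beta=\max(2,\alpha^{-1})$. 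For the upper bound one feeds the same tail estimate into a supermartingale comparison: $\E[((\Delta r_n)^+\wedge M)\mid\F_n]$, with $M$ a suitable truncation, is bounded by an explicit function of $D_n$ and $n$, so $r_n$ (hence $D_n$) is dominated by a Markov chain whose growth exponent one reads off, and a moment bound on $\Delta r_n$ upgrades this to an almost-sure statement. In the linear phase $\alpha>3$ the overshoot has bounded mean uniformly in $A_n$, so $D_n\le Cn+o(n)$ by the strong law, and $D_n\ge n-1$ is trivial.

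The main obstacle is running the harmonic-measure estimate for a genuine DLA aggregate rather than for a caricature interval. These aggregates are sparse and hierarchical, with gaps on every scale, and a single long jump can straddle the whole object, so the naive ``$n$ uniform points'' picture must be replaced by a structural statement carried along the growth by induction --- one must preclude the process from producing configurations (for the upper bound, a lone outlying point with the bulk clustered far away; for the lower bound, something too evenly filled) that would shift the scale-by-scale absorption rates by more than $n^{o(1)}$. The second difficulty, confined to $\tfrac13<\alpha<1$, is that conditioning the transient walk on absorption distorts its Green's function in a way I expect to be able to control only from one side; this is precisely why the displayed bounds $\beta=\max(2,\alpha^{-1})$ and $\beta'=\tfrac{2}{\alpha(2-\alpha)}$ fail to meet there, and sharpening the conditioned-walk estimate --- which should collapse $\beta'$ down to $\beta$ and thereby establish the conjectural transition at the half moment --- is left for later work.
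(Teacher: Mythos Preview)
Your overall strategy---split by regime, estimate the conditional tail of the overshoot $\Delta r_n$, then feed into a strong-law or Borel--Cantelli argument---matches the paper's, and your heuristic for the exponent $\beta$ via the doubling probability $\approx nD^{(2\wedge\alpha)-\alpha-1}$ is correct. The lower bounds and the case $\alpha>3$ go through essentially as you describe. However, there is a genuine gap in your upper bound for $2<\alpha<3$.

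You propose to bound $\E[(\Delta r_n)^+\wedge M\mid\F_n]$ by ``an explicit function of $D_n$ and $n$'' and dominate $r_n$ by a Markov chain. This is precisely the uniform-estimate approach the paper tries and finds insufficient: the best bound depending only on $(n,D_n)$ is $\P(\Delta D_n>m\mid\F_n)\lesssim\min(nm^{1-\alpha},m^{2-\alpha})$, and feeding this into the truncation argument yields only $D_n\le n^{4-\alpha+o(1)}$, which is not sharp anywhere in $(2,3)$. The obstruction is that the worst configuration for this bound---all $n$ points clustered near $\max A_n$---cannot be excluded by $(D_n,n)$ alone. Your diagnosis of the dangerous configuration is also inverted: a lone outlier with the bulk far to the left is \emph{harmless} for the upper bound, since distant points contribute only $(m+W(a))^{1-\alpha}$ with $W(a)=\max A_n-a$ large; it is concentration near the right edge that inflates the jump probability.

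The paper's fix is not an inductive structural hypothesis on $A_n$ but a direct history argument (\lemref{few_jumps}). One records the times $t_1<t_2<\dots$ at which jumps of size $\ge m$ to the right occur and observes that any point $a$ added before $t_i$ automatically has $W_n(a)\ge(j-i)m$ for $n\in(t_j,t_{j+1}]$. Summing $(m+W_n(a))^{1-\alpha}$ over $a\in A_n$ and over a window of length $L$ after each jump, then optimising $L=m^{(\alpha-1)/2}$, yields $\sum_{n\le N}\P(\Delta r_n\ge m)\lesssim Nm^{(1-\alpha)/2}$, which is exactly what the truncated-expectation step needs. The missing idea is this self-limiting mechanism: large jumps themselves spread the aggregate and thereby suppress further large jumps. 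Note that this subtlety is absent for $1<\alpha<2$, where the uniform bound $\P(\Delta D_n>m\mid\F_n)\le nm^{-1+o(1)}$ \emph{is} enough, because the target exponent is already $\beta=2$.

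A smaller remark: the paper does not run your scale-by-scale absorption analysis. It derives an explicit gluing formula $\mu(x,a)=p_{x,a}\,\P_\infty(T_x<T_A)/\P_x(T_A<T_x)$ and estimates the two hitting probabilities via the potential kernel (linear for $\alpha>2$, $\asymp n^{\alpha-1}$ for $1<\alpha<2$). Your route may be workable but is less direct, and the factor $\P_\infty(T_x<T_A)$---which the paper bounds below by a constant using ladder-variable or Green-function estimates---does not fall out of a pure occupation-time count.
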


\begin{figure}
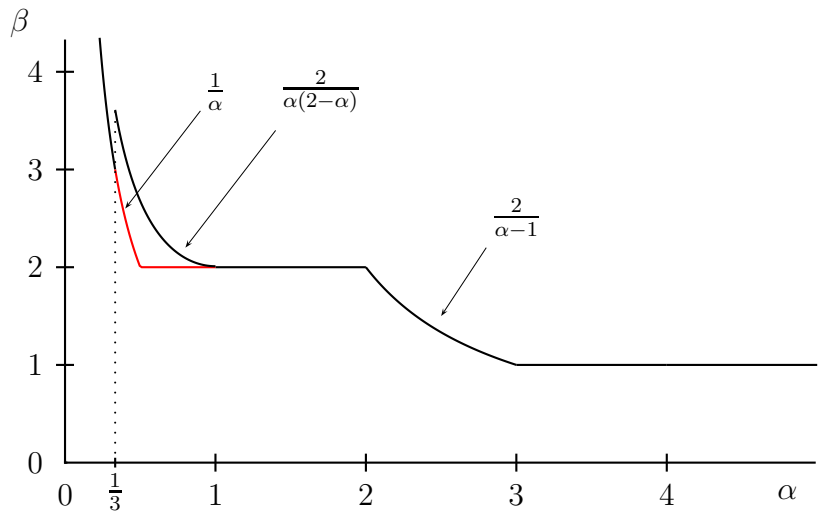

  \begin{center}
    \psset{xunit=20mm,yunit=13mm}
    \pspicture(-.25,-.4)(5.25,4.5)
    \psaxes(0,0)(4.99,4.33)
    \rput(-.3,4.5){$\beta$} \rput(4.8,-.3){$\alpha$}
    \psline(4,1)(5,1)
    \psline(3,1)(4,1)
    \psplot{2}{3}{2 x 1 sub div}
    \psline(1,2)(2,2)
    \psplot{.23}{.3333}{1 x div}
    \psplot[linecolor=red]{.3333}{1}{1 x div 2 max}
    \psplot{.3333}{1}{2 x div 2 x sub div .01 add}
    \psline[linestyle=dotted](.3333, 3.6)(.3333,.1)
    \psline(.3333,-.05)(.3333,.05)
    \rput(.3333,-.3){$\tfrac{1}{3}$}
    \psset{linewidth=0.01}
    \rput(1.7,3.8){$\frac2{\alpha(2-\alpha)}$} \psline{->}(1.4,3.4)(.8,2.2)
    \rput(1,3.8){$\frac1{\alpha}$} \psline{->}(0.9,3.6)(.4,2.6)
    \rput(3,2.5){$\frac2{\alpha-1}$} \psline{->}(2.8,2.2)(2.5,1.5)
    \endpspicture
    \caption{If the random walk $R$ has $\alpha$ finite moments, then the
      diameter of the resulting $n$-particle aggregate grows as $n^\beta$.
      For $\frac{1}{3}<\alpha<1$ our lower and upper bounds for $\beta$
      differ, and we believe the lower bound (in red) is correct.}
    \label{fig:graph}
  \end{center}
\end{figure}

Figure~\ref{fig:graph} depicts the various regimes described in
\thmref{all}. Not all of \thmref{all} is proved in this paper --- the cases
$\alpha<1$ are delegated to part II which is now being written. Let us
remark that this formulation is significantly weaker than our results below
for each regime. The theorems dealing with the various ranges of $\alpha$
apply to much more general random walks, and give more precise estimates on
the diameter $D_n$. The exact requirements and resulting estimates vary,
and the above formulation lies in their intersection. See the statement of
Theorems~\ref{T:third moment}, \ref{T:23_lower}, \ref{T:23_upper} and
\ref{T:12_both} throughout the text; and the results in part II. While our
results as stated do not cover the ``critical'' cases $\alpha=1,2$, the
reasons are mainly simplicity of presentation. The proofs, generally
speaking, can be extended to the boundary case with additional effort (and
sometimes with additional regularity conditions).

The most interesting feature of \thmref{all}, is of course the multiple
phase transitions --- as seen from Figure~\ref{fig:graph} --- at $3$, $2$
and at an (as yet) unknown place in $\left[\frac13,1\right]$. We feel
compelled to discuss them on a heuristic level. Before we consider the
transitions at 2 and 3 there is a point about the regime $\alpha>2$
that should be made.

When $\alpha>2$, $R$ has a finite second moment and the large scale
behaviour of $R$ is similar to that of the simple random walk on $\Z$. In
particular, the random walk has Brownian motion as its scaling limit. This
suggests that all walks with finite step variation will give rise to
similar DLA aggregates. As already stated, this is not the case. While the
Green function and the potential kernel of any such walk have linear
asymptotics (see Chapter~\ref{sec:finite variation}), the growth rate of
the DLA diameter can differ. The basic reason is that the walker is more
likely to discover new territory when making a large jump: A jump of size
$k$ takes the walker out of an interval where it has typically spent the
past $k^2$ steps. Such a jump is therefore roughly $k^2$ times more likely
to reach previously unvisited vertices. This causes large jumps to
contribute disproportionately to the aggregate growth. A similar effect is
exhibited by the ladder process corresponding to the random walk \cite[\S
  18]{S76}: A large jump is more likely to bring the walk to a new maximum,
and consequently the ladder steps have a thicker tail than the walk itself.

Throughout the regime $\alpha>2$ the process is directed: particles coming
from $+\infty$ have a bigger probability of hitting the right side and
particles coming from $-\infty$ will have a bigger probability of hitting
the left side. Further, each particle has probability bounded away from $0$
of hitting the extreme particle on its side and increase the aggregate's
diameter. The diameter can now be compared to a sum of i.i.d.\ variables
(though the increments are, of course, dependent) where if the expectation
is finite then the sum increases linearly whereas if the expectation is
infinite then the largest contribution dominates all the rest. The phase
transition at $3$ reflects a transition between a regime of incremental
additions and a regime of large jumps.

In the regime $2<\alpha<3$, a calculation shows that $\P(\Delta D_n>m)
\approx n m^{1-\alpha}$. When $m=n^{2/(\alpha-1)}$ this probability is
$\approx 1/n$ and there is probability bounded away from $0$ that at least
one such event occurs in the first $n$ particles. As explained above such
an event dominates all the rest and hence $D_n \approx n^{2/(\alpha-1)}$.
For this reason, the proof of the lower bound is the easier (the
calculation of the probability above, once justified, yields it
immediately). The upper bound requires to bound the contribution of the
smaller jumps, which turns out to be trickier and requires some insight
into the structure of the aggregate.

The next phase transition is at $\alpha=2$. This corresponds to the
transition from Gaussian behaviour of the random walk to stable behaviour:
for $\alpha<2$ the walk scales to an $\alpha$-stable process, and the Green
function grows like $n^{\alpha-1}$. In the recurrent stable regime,
$1<\alpha<2$, the calculation is quite similar to the one for the case
$2<\alpha<3$, but the result is that the additional contribution from the
fatter tail of the walk is exactly canceled by the slower growth of the
Green function and the growth of the aggregate is always $n^2$.

Let us first state what we believe is the true behaviour in the regime
$0<\alpha<1$, which is that the lower bound of \thmref{all} is sharp:

\begin{conj*}
  Let $R$ be a symmetric random walk on $\Z$ with step distribution
  satisfying $\P(|R_1-R_0|=k) \sim c k^{-1-\alpha}$ for some $0<\alpha<1$.
  Then $D_n = n^{\beta+o(1)}$ with $\beta = \max(2,\frac{1}{\alpha})$.
\end{conj*}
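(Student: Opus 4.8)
The lower bound $D_n\ge n^{\max(2,1/\alpha)+o(1)}$ is part of \thmref{all}, and the subrange $\alpha<\tfrac13$ is already covered by \thmref{all}; so the conjecture reduces to proving the matching upper bound $D_n\le n^{\max(2,1/\alpha)+o(1)}$ almost surely for $\tfrac13<\alpha<1$, i.e.\ to improving the bound $D_n\le n^{\beta'+o(1)}$, $\beta'=\tfrac{2}{\alpha(2-\alpha)}$. The plan is to run, in this transient regime, the ``incremental growth versus single large jump'' dichotomy used for $2<\alpha<3$ in \thmref{23_upper}: control the conditional law of one diameter increment, then pass to $D_n$.

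\textbf{Step 1: one increment.} Let $\F_{n-1}$ be generated by the first $n-1$ particles, with aggregate $A_{n-1}$ of diameter $D_{n-1}$ and right endpoint $r_{n-1}$ (the left side symmetric). The freezing position of the $n$-th particle is distributed as the harmonic measure from infinity of $A_{n-1}$ (for $\alpha<1$ the walk is transient, and this is the limiting law of the freezing site given absorption as the particle's starting distance tends to $\infty$), and $\{\Delta D_n>m\}$ is the event that this position lies at distance $\ge m$ beyond $r_{n-1}$ — on the right, reached ``on the way in'', or on the left, reached by a single jump over $A_{n-1}$. I would prove a bound
\[
\P\bigl(\Delta D_n>m\mid\F_{n-1}\bigr)\ \le\ \phi(n-1,D_{n-1},m)\qquad\text{for all }m\ge1 ,
\]
with $\phi$ decaying in $m$ at the rate set by the stable tail. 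For a site $x$ at distance $d$ beyond the aggregate the harmonic measure is governed by the probability of a jump from $x$ onto $A_{n-1}$ — which is $\approx(n-1)\,d^{-1-\alpha}$ once $d\gtrsim D_{n-1}$, since $A_{n-1}$ has only $n-1$ occupied sites — weighted by how often the walk from infinity is present at $x$ before being absorbed; the latter is where the transient Green-function estimates (Chapter~\ref{sec:finite variation} in their $\alpha$-stable form, $G(0,y)\approx|y|^{\alpha-1}$ bounded) enter, via a last-exit decomposition. The clean case $A_{n-1}=\{0\}$ already shows the mechanism: there the harmonic measure from infinity of the site $x$ is $\propto|x|^{-1-\alpha}$, so the freezing position has tail $\approx m^{-\alpha}$; this is the source of the $1/\alpha$ branch of the exponent (dominant while the aggregate stays clustered), and the $2$ branch must come from the slower, more regular growth that takes over once the aggregate is spread over an interval much longer than the number of particles in it.

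\textbf{Step 2: increments to $D_n$, by bootstrap.} Given $\phi$, bound $D_n$ via $\max_{k\le n}\Delta D_k$ together with $\sum_{k\le n}\Delta D_k$: a union bound plus Borel--Cantelli over $k\le n$ and over dyadic scales of $m$ handles the former, a martingale or second-moment estimate handles the (dependent, but conditionally dominated) sum. Since $\phi$ involves $D_{n-1}$ this is self-improving: starting from $D_n\le n^{\beta'+o(1)}$, feed it into $\phi$, re-run Steps 1--2, obtain $D_n\le n^{\beta_1+o(1)}$ with $\beta_1<\beta'$, and iterate to the fixed point, which should be the solution of an elementary equation in the exponents — balancing the ``$n$ particles $\times$ tail'' count against the Green-function cost of reaching depth $m$ — namely $\max(2,1/\alpha)$.

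\textbf{Main obstacle.} The crux is the sharp form of $\phi$ in Step 1: the crude estimate behind $\beta'$ must be replaced by one that is tight at the relevant scales. Morally the crude bound leaves room for a ``ratcheting'' scenario — a deep deposition, then rapid filling-in of the gap behind it, then a still deeper deposition — and ruling it out requires genuine control of the geometry of $A_{n-1}$: its gap structure and density profile, equivalently how concentrated its harmonic measure from infinity is near the extreme particles. This is the same kind of structural input that \thmref{23_upper} needed for $2<\alpha<3$, and it appears harder to obtain here; I would expect this to be the whole difficulty and the reason the statement is presently only a conjecture. Two subsidiary points that still need care: gluing the $d\lesssim D_{n-1}$ and $d\gtrsim D_{n-1}$ regimes in Step 1 without losing the exponent, and verifying that the Step 2 bootstrap actually converges to $\max(2,1/\alpha)$ rather than stalling above it.
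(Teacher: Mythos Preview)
The statement is a \emph{conjecture} in the paper, not a theorem: the paper offers no proof, only the heuristic discussion that follows its statement and the remark that the subrange $0<\alpha<\tfrac13$ (and the upper bound $\beta'=\tfrac{2}{\alpha(2-\alpha)}$ for $\tfrac13<\alpha<1$) will be handled in Part~II. There is therefore no paper proof to compare your proposal against.

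That said, your outline is a fair reading of the situation, and you correctly flag it as incomplete. A few points of alignment and correction. You are right that only the upper bound for $\tfrac13<\alpha<1$ is missing, and your ``main obstacle'' --- the need for genuine structural control of $A_{n-1}$ (its gap structure and the concentration of harmonic measure near the extremes) beyond what gives the crude $\beta'$ --- is exactly what the paper identifies: it says the argument for $\alpha<\tfrac13$ fails for $\tfrac13<\alpha<\tfrac12$ only if the process ``behaves quite ridiculously'', and the extension to $\tfrac12<\alpha<1$ rests on an unproven monotonicity of $\beta(\alpha)$ together with matching the value at $\alpha=1$. One technical correction: for $\alpha<1$ the walk is transient, so the gluing measure is \eqref{eq:transmu} (escape probabilities over capacity), not the recurrent formula \eqref{eq:murecurr}; your pointer to Chapter~\ref{sec:finite variation} for the Green-function input is misplaced (that chapter is $2<\alpha<3$), and the transient potential theory you actually need is what the paper defers to Part~II. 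Your bootstrap scheme in Step~2 is plausible in spirit, but note that even the paper's sharp upper bound for $2<\alpha<3$ (\thmref{23_upper}) did \emph{not} proceed by bootstrapping a uniform increment bound --- it needed the non-uniform estimate of \lemref{few_jumps}, which already encodes structural information about the aggregate. Expecting a self-improving iteration on a bound of the form $\phi(n,D_{n-1},m)$ to converge to the conjectured exponent is optimistic; the paper's experience suggests the structural input has to be built in at the level of the increment estimate itself, not recovered by iteration.

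In short: your proposal is an honest sketch of a plausible line of attack with the hard step clearly named, not a proof; and the paper agrees that the statement is open.
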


The reason for this conjecture will be discussed in more detail in part II,
but for now let us remark that we can prove this conjecture in the regime
$0<\alpha<\frac{1}{3}$. Moreover, a careful analysis of where the proof
fails in the regime $\frac{1}{3}<\alpha<\frac{1}{2}$ shows that for the
result not to hold requires the process to behave quite ridiculously (we
hope that the reader would forgive the unscientific language). Thus we have
a sound basis to believe that at $\alpha=\frac{1}{2}$, the aggregate grows
like $n^{2+o(1)}$. But this is exactly the growth rate at $\alpha=1$! It is
reasonable to believe that $\beta$ is decreasing as a function of $\alpha$
(though, again, we have no proof of that either), and hence the exponent
should be $2$ throughout the interval $[\frac{1}{2},1]$.

This conjecture raises two questions. The first: why is there a transition
at $\frac{1}{2}$? One may point at a certain transition in the behaviour of
a certain bound on the capacity of a fractal, but that is not much
different than saying ``because that is what the calculation shows''. But
the bigger question is: why is there no transition at $1$? After all, $1$
is the location of the most dramatic transition in our picture, the
transition between the recurrent and transient regimes (see
e.g.\ \cite[E8.2]{S76}). In the transient regime, one needs to modify the
definition of the process: we cannot simply have a particle ``coming from
infinity''. Instead one must condition on the particle ever hitting the
aggregate. Put differently, even though the processes at both sides of $1$
are different processes with only some kind of heuristic connection, they
still seem to grow at the same rate.

We remark that even if our conjecture is false and there is a phase
transition at $1$, it must be very weak. Indeed, the upper bound of
$n^{2/(\alpha(2-\alpha))}$ in \thmref{all} shows that $\beta(\alpha)$,
assuming it exists, must be differentiable at $1$ with, and $\beta'(1)=0$.
Thus despite the fundamental difference between the process for $\alpha<1$
and $\alpha>1$, the effect on the behaviour of $D_n$ is not so great. Let
us stress again that for all we know the growth rate of the aggregate at
$\frac{1}{3}<\alpha<1$ might be undefined, or depend on the particular
walk.

We should caution that there are difficulties in simulating the process to
get good numerical support for the conjecture. There are heuristic reasons
to believe (see Chapter~\ref{sec:Z3}) that the growth rate is not quite
$n^{\max(2,1/\alpha)}$, but that there are corrections which are at least
logarithmic in size.

While \thmref{all} and the bulk of our results describe only the behaviour
of the diameter of the aggregates, they give reason to believe that
rescaling the process might yield an interesting process. One could ask,
does the random set $A_n$ have a scaling limit? Note that there several
interpretations to this question. The scaling could be by some
deterministic factor, or by normalizing the set $A_n$ to the interval
$[0,1]$. There are also several topologies under which this question is
interesting, including the Hausdorff topology on subsets of $[0,1]$, and
weak convergence of the uniform measure on (the rescaled) $A_n$. A natural
topology to consider might be weak convergence of the rescaled harmonic
measure.

\medskip

Last but not least, let us discuss $A_\infty$, the infinite aggregate
defined as the union of the aggregates at all finite times. The natural
expectation is that the density of $A_\infty$ should reflect the growth
rate of $D_n$, at least to order of magnitude, that is if $D_n =
n^{\beta+o(1)}$ then
\begin{equation}
  \big| A_\infty\cap[-n,n] \big| = n^{1/\beta+o(1)}.  \label{eq:Ainfty}
\end{equation}
Indeed, in part III we give a proof of \eqref{eq:Ainfty} for the case
$\alpha>2$. When $\alpha > 3$ we show that the process has renewal times,
at which the subsequent growth of the aggregate is independent of the
structure of the aggregate. \eqref{eq:Ainfty} is a direct consequence. When
$2<\alpha<3$, these renewal times no longer exist. However, we show that it
is still hard for particles to penetrate deep into the aggregate, and
derive $\eqref{eq:Ainfty}$ in this case as well. The case $\alpha<2$ has
other difficulties and at present we are not ready to speculate on the
validity of \eqref{eq:Ainfty}. However, in Chapter~\ref{sec:Z3} we give an
example of a walk with $\alpha=0$, ``the $\Z^{3}$ restricted walk'' for
which, despite the fact that $D_n$ grows super-exponentially,
$A_\infty=\Z$. We do not know if such examples exist for $0<\alpha<2$, as
the construction we use is somewhat special.

\paragraph{Roadmap}

In Chapter~\ref{sec:defs} we derive a general formula for the gluing
measure in the recurrent case. This chapter is a prerequisite for the rest
of the paper. We then highly recommend reading Chapter~\ref{sec:Z2} in
which we analyze one specific case, the $\Z^2$ restricted walk (this walk
has $\alpha=1$). The proof in this case is much easier than the other
cases, and does not require any knowledge of stable random variables. The
next chapters are arranged by $\alpha$; Chapter~\ref{sec:finite third} for
$\alpha>3$, Chapter~\ref{sec:finite variation} for $2<\alpha<3$,
Chapter~\ref{sec:12} for $1<\alpha<2$. Finally, Chapter~\ref{sec:Z3}
describes the aforementioned example with $\alpha=0$.

A nontrivial portion of the paper (and of part II) is dedicated to discrete
potential theory, both general and that of stable walks
(e.g.\ Lemma~\ref{L:alpha12_tools}). We expected to find many of these
results in standard references, and did not.


\subsection*{Acknowledgements}

We thank Vlada Limic for useful suggestions. While working on this project,
GA was supported by the Weizmann Institute and the University of Toronto;
OA was supported by the universities of Paris at Orsay, British Columbia
and Toronto, by Chateaubriand and Rothschild Fellowships and by NSERC; IB
was supported by the Renee and Jay Weiss Chair; GK was supported by, in
chronological order, the Weizmann Institute of Science (Charles Clore
fund), Tel Aviv University, the Institute of Advanced Science (Oswald
Veblen fund and NSF grant DMS-0111298) and the Sieff prize. We would like
to thank all these institutions and funds.

\section{Preliminaries}
\label{sec:defs}

\subsection{Notations}

For a subset $A\subset \Z$ we will denote by $\diam A$ the diameter of $A$,
namely $\max A - \min A$. For $x\in\Z$ we will denote by $\dist(x,A)$ the
point-to-set distance, namely $\min_{y\in A}|x-y|$. Throughout we let $A_n$
be the $n$ point aggregate, and denote $D_n=\diam A_n$, $\Delta D_n =
D_{n+1}-D_n$. Let $\F_n$ be the minimal $\sigma$-field generated by
$A_0,\dotsc,A_n$.

We denote a single step of the random walk by $\xi$, and the random walk
itself by $R=(R_0,R_1,\dotsc)$. We denote by $\P_x$ the probability measure
of the random walk started at $x$. The transition probabilities of the
random walk are denoted by $p_{x,y} = \P(\xi=y-x)$. For a given set $A$,
define
\[
p(x,A)=\sum_{a\in A}p_{x,a}.
\]
We denote by $T_A$ be the hitting time of $A$, defined as
\[
T_A = \min\{n>0 \text{ s.t. } R_n\in A\}.
\]
Note that $T_A>0$ even if the random walks starts in $A$. For a set $\{x\}$
with a single member we also write $T_x$ for $T_{\{x\}}$. Denote by
$g(x,y)$ the Green function of $R$ defined by
\[
g(x,y)=\sum_{n=0}^\infty \P_x(R_n=y)\,.
\]
If $A\subset\Z$ then we define the relative Green function (a.k.a.\ the
Green function for the walk killed on $A$) by
\[
g_A(x,y) =\sum_{n=0}^\infty \P_x(R_n=y,\,T_A>n)
\]
and the hitting measure by
\[
H_A(x,a)=\begin{cases}
\P_x(R_{T_A}=a) & x\not\in A\\
\delta_{x,a} & x\in A
\end{cases}
\qquad H_A(\pm\infty,a)=\lim_{x\to\pm\infty}H_A(x,a)
\]
by \cite[T30.1]{S76} the limit on the right-hand side exists for any
aperiodic random walk.

By $C$ and $c$ we denote constants which depend only on the law of $\xi$
but not on any other parameter involved. The same holds for the constants
hidden in the $o(\cdot)$ notation, except when it is used in estimates for
$D_n$ (as in \thmref{all} above and other results below) where the factor
$o(\cdot)$ is random (This should always be clear from the context).
Generally $C$ and $c$ might take different values at different places, even
within the same formula. $C$ will usually pertain to constants which are
``big enough'' and $c$ to constants which are ``small enough''.

$X\lesssim Y$ denotes that $X<C Y$. By $X\approx Y$ we mean $cX<Y<CX$ (that
is, $X\lesssim Y \lesssim X$). By $X\sim Y$ we mean $X=(1+o(1))Y$. By
$X\asymp Y$ we mean that $X/Y$ is a 
slowly varying function --- see Chapter~\ref{sec:12} for details. $\lfloor
x \rfloor$ denotes the integer value of $x$.

\subsection{Gluing measures}

Let $R$ be a recurrent aperiodic random walk on $\Z$. (Recall that a random
walk is aperiodic if for any $x,y\in\Z$ there exists some $n$ such that
$\P_x(R_n=y) \neq 0$.) We will assume implicitly throughout the paper that
all our random walks are aperiodic. Let $A\subset\Z$ be some finite set,
and $T_A$ the (a.s.\ finite) hitting time of $A$ by $R$. We would like to
define the measure $\P_\infty = \lim_{y\to\infty} \P_y$. However, care must
be taken here, since the limit is not a probability measure using the
natural $\sigma$-algebra of the random walks, and the laws of natural
quantities such as $R_n$ or $T_A$ do not have a limit. However, the law of
$R_{T_A}$ --- the point at which $A$ is hit --- does have a limit, as do
the probabilities of events like $\{T_a<T_b\}$.

We define the measure $\P_{+\infty}$, depending implicitly on $A$,
as follows. This measure is supported on paths $\{\gamma_i\}_{i\leq
0}$, i.e.\ paths with no beginning but a last step. It is defined as
the limit as $y\to\infty$ of the law of $\{R_{T_A+i}\}_{i\leq 0}$.
Informally, $\P_{+\infty}$ is interpreted as the random walk started
at $+\infty$, and stopped when it hits $A$. Clearly it is supported
on paths in $\Z\setminus A$, except for $R_0\in A$. The measure
$\P_{-\infty}$ is defined similarly using $y\to-\infty$. We define
the measure $\P_\infty = \frac12(\P_{+\infty} + \P_{-\infty})$.
Finally, let
\[
\mu(x,a) = \mu(x,a;A) = \P_\infty(R_{-1}=x,R_0=a)
\]
be the probability that the random walk hits $A$ by making a step from $x$
to $a$.

\begin{lemma}\label{L:murecurr}
  For any recurrent random walk and any finite set $A$, the limits
  $\P_{\pm\infty}$ exist and are probability measures. For any $x_0\in A$
  and $x_{-1},\ldots,x_{-n}\notin A$
  \begin{equation}
    \P_{\pm\infty}(R_i=x_i \text{ for } -n\le i\le 0) =
    \frac{\P_{\pm\infty} (T_{x_{-n}} < T_A)} {\P_{x_{-n}}(T_A <
      T_{x_{-n}})} \prod_{i=-n}^{-1} p_{x_i,x_{i+1}}
    \label{eq:Ppmrecurr}
  \end{equation}
  and in particular
  \begin{equation}
    \mu(x,a) = \mu(x,a;A) =
    \frac{p_{x,a} \P_{\infty} (T_x < T_A)} {\P_x(T_A < T_x)}.
    \label{eq:murecurr}
  \end{equation}
\end{lemma}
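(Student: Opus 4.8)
The plan is to compute the law of a finite path-segment ending at the hitting point of $A$ by starting from a walk launched at a large $y$, decomposing its trajectory at the last visit to $x_{-n}$ before hitting $A$, and then letting $y\to\infty$. Fix $y$ large (in particular $y>\max A$, and $y\neq x_{-n}$), and consider the walk $R$ under $\P_y$, stopped at $T_A$. For the event $\{R_{T_A+i}=x_i,\ -n\le i\le 0\}$ to occur, the walk must (a) reach $x_{-n}$ before hitting $A$, and then (b) from $x_{-n}$ follow exactly the deterministic path $x_{-n}\to x_{-n+1}\to\cdots\to x_0$ while avoiding $A$ until the final step into $x_0\in A$. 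Since the $x_{-n+1},\dots,x_{-1}$ are not in $A$, event (b) has probability exactly $\prod_{i=-n}^{-1}p_{x_i,x_{i+1}}$ by the Markov property, independently of how $x_{-n}$ was reached. So I would write, using the strong Markov property at the \emph{last} visit to $x_{-n}$ strictly before $T_A$,
\[
\P_y\big(R_{T_A+i}=x_i,\ -n\le i\le 0\big)
= \P_y\big(\text{last visit to }x_{-n}\text{ before }T_A\text{ exists}\big)\cdot
\frac{\prod_{i=-n}^{-1}p_{x_i,x_{i+1}}}{\text{(correction)}},
\]
the point being to express the "number of excursions / last-exit" factor correctly. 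The clean way is the standard last-exit decomposition: the probability that the walk from $y$ visits $x_{-n}$ at all before $T_A$ and then never returns to $x_{-n}$ before $T_A$ equals $\P_y(T_{x_{-n}}<T_A)\cdot\P_{x_{-n}}(T_A<T_{x_{-n}})$ — wait, more carefully, $\P_y(T_{x_{-n}}<T_A)$ times the probability that starting from $x_{-n}$ one does not return to $x_{-n}$ before $A$, which is $\P_{x_{-n}}(T_A<T_{x_{-n}})$. Hence
\[
\P_y\big(R_{T_A+i}=x_i,\ -n\le i\le 0\big)
= \frac{\P_y(T_{x_{-n}}<T_A)}{\P_{x_{-n}}(T_A<T_{x_{-n}})}\,
\prod_{i=-n}^{-1}p_{x_i,x_{i+1}},
\]
because conditioning on the path from the last exit of $x_{-n}$ requires that path to hit $A$ for the first time exactly via $x_{-1}\to x_0$, which — given we start the last excursion at $x_{-n}$ and it is the one reaching $A$ — has conditional probability $\prod p_{x_i,x_{i+1}}/\P_{x_{-n}}(T_A<T_{x_{-n}})$. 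I will need to be slightly careful that the event "$x_{-n}$ is visited and the excursion from its last visit follows the prescribed path and this is the excursion that reaches $A$" already forces $T_A<T_{x_{-n}}$ along that excursion, so no double counting occurs.

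Next I take $y\to+\infty$. By \cite[T30.1]{S76} (quoted in the excerpt) the relevant hitting probabilities have limits for aperiodic recurrent walks; in particular $\P_y(T_{x_{-n}}<T_A)\to\P_{+\infty}(T_{x_{-n}}<T_A)$, which exists because this is a statement about which of the finitely many points $A\cup\{x_{-n}\}$ is hit first, exactly the type of event for which the excerpt asserts convergence. The denominator $\P_{x_{-n}}(T_A<T_{x_{-n}})$ and the product $\prod p_{x_i,x_{i+1}}$ do not depend on $y$. This yields \eqref{eq:Ppmrecurr} for $\P_{+\infty}$; the $-\infty$ case is identical with $y\to-\infty$, and this simultaneously shows the limiting object is a genuine probability measure on one-sided paths (summing the right-hand side over all admissible $(x_{-n},\dots,x_{-1})$ and all $n$ reconstructs the total mass, which is $1$ since each $\P_y$ is a probability measure and the tightness/consistency is inherited in the limit — I would phrase this as: the finite-dimensional distributions are consistent and the mass at "path of length $\ge k$" is nonincreasing and its sum telescopes to $1$ by recurrence).

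Finally, for \eqref{eq:murecurr}: specialize \eqref{eq:Ppmrecurr} to $n=1$, so $x_{-1}=x\notin A$, $x_0=a\in A$. Then $\prod_{i=-1}^{-1}p_{x_i,x_{i+1}}=p_{x,a}$ and the formula reads $\P_{+\infty}(R_{-1}=x,R_0=a)=p_{x,a}\,\P_{+\infty}(T_x<T_A)/\P_x(T_A<T_x)$; averaging the $+\infty$ and $-\infty$ versions gives $\mu(x,a)=p_{x,a}\,\P_\infty(T_x<T_A)/\P_x(T_A<T_x)$, where I use that the denominator $\P_x(T_A<T_x)$ is the same in both and factors out of the average of the numerators. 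The main obstacle, and the place to be careful, is the last-exit decomposition identity and its interchange with the limit $y\to\infty$: one must make sure the excursion-counting is done at the \emph{last} visit to $x_{-n}$ (not the first), so that the post-$x_{-n}$ portion is a fresh walk conditioned only to reach $A$ before returning to $x_{-n}$, and one must invoke exactly the right instance of \cite[T30.1]{S76} — convergence of hitting distributions of finite sets — rather than convergence of $\P_y$ as measures on full trajectories, which fails. Everything else is a routine application of the strong Markov property.
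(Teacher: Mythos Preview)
Your derivation of the formula \eqref{eq:Ppmrecurr} for finite $y$ is essentially the paper's argument, though you phrase it as a last-exit decomposition while the paper counts the expected number of visits to $z:=x_{-n}$ before $T_A$ (namely $\P_y(T_z<T_A)/\P_z(T_A<T_z)$) and observes that at each such visit the walk has probability $\prod p_{x_i,x_{i+1}}$ of completing the prescribed path, these events being disjoint. The counting formulation is slightly cleaner because it does not stumble when the prescribed path itself revisits $x_{-n}$ (a case your last-exit phrasing would have to treat separately, since then $T_A-n$ is \emph{not} the last visit to $x_{-n}$ and the ``excursion after the last visit'' does not follow the full prescribed path). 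You flag this concern yourself but do not resolve it; the paper's version sidesteps it entirely. The passage to the limit via $\P_y(T_z<T_A)=H_{A\cup\{z\}}(y,z)$ and \cite[T30.1]{S76} is the same in both.

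The genuine gap is in showing that the limit is a \emph{probability} measure. You write that ``tightness/consistency is inherited in the limit'' and that the total mass ``telescopes to $1$ by recurrence'', but this is exactly the nontrivial point. For each finite $y$ the sum over admissible $(x_0,\dots,x_{-n})$ is $1$ (or rather $\P_y(T_A\ge n)$), but pointwise convergence of nonnegative summands gives only $\sum\P_{\pm\infty}(\cdots)\le 1$ by Fatou; the reverse inequality requires tightness of the law of $R_{T_A-n}$ uniformly in $y$. The paper supplies a real argument here: it proves (as a separate claim) that for any finite $B$, $H_{[-m,m]}(\pm\infty,B)\to 0$ as $m\to\infty$, and then builds a nested sequence of intervals $A\subset I_0\subset\cdots\subset I_n$ so that the walk from any far starting point enters each $I_k$ outside $I_{k-1}$ with high probability, forcing $T_A\ge n$ and $R_{T_A-n}\in I_n$ with probability $\ge 1-\eps$ uniformly in $y$. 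Without something of this sort, mass could in principle escape to infinity in the limit (the walk could hit $A$ by a single huge jump with nonvanishing probability as $y\to\infty$), and your sentence does not rule this out.
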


\begin{proof}
  Fix a starting point $y$ and denote
  \[
  \P_y(x_0,x_{-1},\ldots,x_{-n}) =
  \P_y(R_{T_A-i} = x_i \text{ for } -n\le i\le 0) \,.
  \]
  (Where $R_{T_A-k}$ is undefined if the walk hits $A$ in less than $k$
  steps.) For clarity, write $z=x_{-n}$. Now, in order for the event on the
  right-hand side to happen, the walk must first hit $z$, which happens
  with probability $\P_y(T_z < T_A)$. By the strong Markov property at
  $T_z$, with probability $\P_z(T_A < T_z)$ the walk will not hit $A$
  before its next return to $z$. Thus the expected number of visits to $z$
  before $T_A$ is
  \[
  \frac{\P_y(T_z < T_A)} {\P_z(T_A < T_z)}.
  \]
  At each of these visits there is probability $\prod_{i=-n}^{-1}
  p_{x_i,x_{i+1}}$ of making the prescribed sequence of jumps ending at
  $x_0\in A$. Since the walk is stopped once such a sequence of jumps is
  made, the events of making these jumps after the $i$'th visit to $z$ are
  disjoint (for different $i$'s). Hence
  \[
  \P_y(x_0,x_{-1},\ldots,x_{-n}) =
  \frac{\P_y(T_z<T_A)}{\P_z(T_A<T_z)}\prod_{i=-n}^{-1} p_{x_i,x_{i+1}} \,.
  \]

  Thus to see that $\lim_{y\to\pm\infty} \P_y(x_0,x_{-1},\ldots,x_{-n})$
  exists, it suffices to show that $\lim \P_y(T_z<T_A)$ exists. Recall that
  the harmonic measure from infinity on a finite set $A$ is defined by
  \[
  H_A(\pm\infty,a) = \lim_{y\to\pm\infty} H_A(y,a) =
  \lim_{y\to\pm\infty} \P_y(R_{T_A}=a) \, .
  \]
  By \cite[T30.1]{S76}, this limit always exists. Note that
  \[
  \P_y(T_z<T_A) = \P_y(R_{T_{A\cup\{z\}}}=z) = H_{A\cup\{z\}}(y,z) \,.
  \]
  Existence of $\lim_{y\to\pm\infty} \P_y(x_0,x_{-1},\ldots,x_{-n})$
  follows.

  It remains to show that the limit is a probability measure i.e.\ that
  \[
  \sum_{\substack{x_0\in A\\ x_{-1},\ldots x_{-n}\notin A}}
  \P_{\pm\infty}(x_0,x_{-1},\ldots,x_{-n}) = 1.
  \]
  For any finite starting point $y$ this sum is $1$ by recurrence. The
  problem is that as $y\to\pm\infty$, the walk might be have a high
  probability of hitting $A$ by a large jump, so that for some $i$, the law
  of $x_i$ is not tight as $y\to\infty$. However if we show that the law of
  $x_{-n}$ is tight, then $\lim_{y\to\pm\infty}
  \P_y(x_0,x_{-1},\ldots,x_{-n})$ will be a probability measure.

  \begin{claim}\label{C:tight}
    For any finite $A\subset Z$,
    \[
    \lim_{m\to\infty}H_{[-m,m]}(\pm\infty,A)=0 \,.
    \]
  \end{claim}

  \begin{proof}
    For clarity, we use $H(x,y;A)$ in place of with $H_A(x,y)$. It suffices
    to prove the claim for a singleton $A=\{a\}$. We may assume $a\ge 0$.
    In this case we write
    \begin{align*}
      &&1&=\sum_{|x|\le m} H\big(\pm\!\infty,x; [-m,m]\big) \\
      &\mbox{by monotonicity} &&
      \ge \sum_{|x|\le m} H\big(\pm\!\infty,x; [x-2a-2m,x+2m]\big) \\
      &\mbox{by translation invariance} &&
      =\sum_{|x|\le m} H\big(\pm\!\infty,a; [-a-2m,a+2m]\big)\,.
    \end{align*}
    Hence $H\big(\pm\!\infty,a; [-(a+2m),a+2m]\big) \le 1/(2m+1)$.
  \end{proof}

  Returning to the proof of \lemref{murecurr}, fix $\eps>0$. For any finite
  set $A$ and any $n$ we can pick a sequence of finite intervals $A\subset
  I_0 \subset I_1\subset\ldots \subset I_n$ so that for any $k<n$ and any
  $y\notin I_k$, the probability from $y$ of hitting $I_k$ at a point of
  $I_{k-1}$ is at most $\eps/n$. We get
  \[
  \P_y(T_{I_n}<T_A-n) <\eps\qquad\forall y\not\in I_n
  \]
  and therefore by the strong Markov property at the stopping time $T_{I_n}$,
  \[
  \P_y(|R_{T_A-n}|>M) < \E_y\P_{T_{I_n}}(|R_{T_A-n}|>M)+\eps\qquad
  \forall M\;\forall y\not\in I_n\,.
  \]
  Now, the law of
  $R_{T_A-n}$ w.r.t.\ any starting point $I_n$ is tight (since these are
  just $|I_n|$ distributions). Hence we get
  \[
  \lim_{M\to\infty}\max_{y\in\Z}\P_y(|R_{T_A-n}|>M)=0\,,
  \]
  which is the required tightness.
\end{proof}

\begin{defn}\label{def:DLA}
  Let $R$ be a random walk on $\Z$. The \emph{DLA process with respect to
    $R$} is a sequence of random sets $A_0=\{0\} \subset A_1 \subset
  \cdots$ such that for any $A\subset \Z$, and $x\in\Z\setminus A$ and any
  $n>0$,
  \begin{equation}
  \P(A_{n+1} = A\cup\{x\}\,|\,A_n=A) = \sum_{a\in A} \mu(x,a;A)
  \label{eq:DLA_def}
  \end{equation}
  where $\mu$ is defined by \eqref{eq:murecurr}.
\end{defn}

When $\xi$ has infinite variance, $\P_{+\infty}(T_x < T_A) = \P_{-\infty}
(T_x < T_A)$ for any $x$ and $A$ and indeed $\P_{+\infty}=\P_{-\infty}$
\cite[T30.1 (1)]{S76}, but otherwise $\P_{+\infty}$ and $\P_{-\infty}$
differ. It is possible to define the DLA using walks that start only at
$+\infty$ or $-\infty$. This leads to minor variations on our results, and
the proofs remain valid with minimal modification.

Since by the RHS of \eqref{eq:DLA_def} the probability of adding a point
$x$ to $A_n$ can be interpreted as a measure over infinite paths ending at
$A_n$, we will say that $x$ is ``glued'' to $A_n$ at $a$ if the last two
steps of the path of the added particle are $x$ and $a$. The measure $\mu$
is thus called the ``gluing'' measure.

\section{The restricted $\Z^2$ walk}
\label{sec:Z2}

In the chapter we discuss a special random walk on $\Z$ resulting from an
embedding of $\Z$ as a sub-group of $\Z^2$, say as the diagonal
$\{(x,x)\}_{x\in \Z}$. Consider the sequence of vertices of $\Z$ visited by
a simple random walk on $\Z^2$, i.e.\ the restriction of the random walk to
$\Z$. This sequence of vertices forms a random walk on $\Z$. It is well
known that this walk has $\alpha=1$, and more precisely that the steps of
this random walk have approximately the Cauchy distribution, i.e.
$\P(\xi=k) \sim c|k|^{-2}$ (for the special case of the diagonal
embedding, there is even a precise formula \cite[E8.3]{S76}
$\P(\xi=0)=1-\frac{2}{\pi} ,\ \P(\xi=k)= \frac{2}{\pi(4k^2-1)}$ but we do
not use this extra precision). The fact that $\Z^2$ is recurrent
immediately implies that the restricted $\Z^2$ walk is recurrent as well,
hence we may consider the DLA formed by this walk.

While the restricted $\Z^2$ walk is a very special example, its study has
merit. The proofs are simpler, but the general ideas are the basis for the
proofs in more general cases. The reason the proofs are simpler is the vast
and very precise knowledge concerning the behaviour of the simple random
walk in $\Z^2$. This allows us to get sharp bounds for various quantities.
Addition of a vertex to the DLA in $\Z$ may be studied by examining a
simple random walk on $\Z^2$ and considering the last visit to $\Z$ before
hitting $A$.

\begin{thm}\label{T:Z2_lower}
  Consider the DLA generated by the $\Z^2$ restricted walk. For some $c>0$
  we have almost surely
  \[
  \liminf \frac{D_n \log\log n}{n^2} > c  \qquad \text{and} \qquad
  \limsup \frac{D_n}{n^2} = \infty.
  \]
\end{thm}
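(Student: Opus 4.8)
The plan is to analyze a single step of the DLA and show that, conditioned on the current aggregate $A_n$ having diameter $D_n$, the probability that the new particle lands far to the side and increases the diameter substantially is polynomially bounded below. Concretely, I would first establish the key one-step estimate: there is $c>0$ so that, writing $M=\max A_n$ (and symmetrically for $\min A_n$), for every $t\ge 1$
\[
\P\big(R_0 > M+t D_n \,\big|\, \F_n\big) \ge \frac{c}{t^2}.
\]
The mechanism is the one emphasized in the introduction: a particle performing the $\Z^2$-restricted walk and coming from $+\infty$ hits $A_n$ by a last jump from some vertex $x$ to some $a\in A_n$. Using \eqref{eq:murecurr}, $\mu(x,a;A_n)$ is comparable to $p_{x,a}$ times the probability the walk from $+\infty$ reaches $x$ before $A_n$, divided by the escape probability $\P_x(T_{A_n}<T_x)$. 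Since the $\Z^2$-restricted walk has Cauchy-type tails $p_{x,a}\asymp |x-a|^{-2}$, and since the walk is very likely to first reach the vicinity of $A_n$ by coming down to distance $\asymp sD_n$ from $M$ for each scale $s$, summing $\mu(x,M)$ over $x$ with $x-M\in[tD_n,2tD_n]$ gives a contribution $\asymp (tD_n)\cdot (tD_n)^{-2}\cdot (\text{escape factor})$. The escape probability $\P_x(T_{A_n}<T_x)$ for $x$ at distance $\asymp tD_n$ from a set of diameter $D_n$ is, for this walk, bounded above by a constant (it is actually $\asymp 1/\log t$ or better, but I only need an upper bound), so the sum is $\gtrsim 1/t^2$. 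This is where the precise two-dimensional random walk estimates — the known form of the Green function and harmonic measure for SRW on $\Z^2$, hitting a box — make the argument clean, and this step is the main obstacle: one must control $\P_{+\infty}(T_x<T_{A_n})$ uniformly over all aggregates of a given diameter, which requires knowing that the harmonic measure from infinity of a set of diameter $D$ as seen from distance $r$ behaves like that of an interval, and that the relevant capacities are comparable regardless of the internal structure of $A_n$.

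Granting the one-step estimate, the $\limsup$ statement is immediate by a Borel–Cantelli / second-moment-free argument: for each $n$, independently of the past, with probability $\ge c/t^2$ we have $D_{n+1}\ge D_n + t D_n \ge tD_n$. Taking $t=t_n\to\infty$ slowly (say along a sparse subsequence $n_k$ with $\sum 1/t_{n_k}^2=\infty$ but $t_{n_k}/n_k^2\to\infty$) and using the conditional Borel–Cantelli lemma, infinitely often $D_{n_k+1}\ge t_{n_k}D_{n_k}\ge t_{n_k}$, which forces $\limsup D_n/n^2=\infty$. (One must note $D_n$ is non-decreasing and $D_n\ge n-1$, so $D_{n_k}\ge 1$ trivially; the point is that a single lucky jump dwarfs $n_k^2$.)

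For the $\liminf$ lower bound $D_n \gtrsim n^2/\log\log n$, I would iterate the one-step estimate over blocks. Partition time into blocks $[2^{j},2^{j+1})$; in the $j$-th block there are $\asymp 2^j$ particles, and by the one-step estimate each has conditional probability $\ge c/t^2$ of multiplying the current diameter by $t$. Choosing $t$ a small constant (say $t=2$), in each block the probability that \emph{no} particle doubles the diameter is at most $(1-c/4)^{2^j}$, which is summable in $j$; hence a.s.\ for all large $j$ the diameter at least doubles during block $j$. That alone gives exponential growth, which is too strong / not matching — so instead I calibrate: to get $D_n\gtrsim n^2/\log\log n$ I only need that within a window of $\asymp \log\log n$ consecutive particles near "time $n$" one particle increases the diameter by a factor $\asymp n^2/(D_n\log\log n)$. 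Using the one-step bound with $t = t_n := n^2/(C D_n \log\log n)$, the probability of success among $\log\log n$ independent tries is $\ge 1-(1-c/t_n^2)^{\log\log n}$; choosing constants so this is summable along a geometric subsequence of $n$'s (via Borel–Cantelli on the complementary events) shows that a.s.\ the event "$D_n$ ever drops below $c n^2/\log\log n$ after being above it" cannot happen infinitely often, giving the $\liminf$. The bookkeeping — making the windows, the choice of $t_n$, and the Borel–Cantelli summation mutually consistent — is routine but fiddly; the only genuine analytic input remains the uniform one-step estimate above, and in particular its uniformity over the shape of $A_n$, which is the crux of the whole argument.
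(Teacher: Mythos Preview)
Your one-step estimate is the wrong one, and this breaks the whole argument. You claim
\[
\P\big(R_0 > M+t D_n \,\big|\, \F_n\big) \ge \frac{c}{t^2},
\]
but this is false in general. Your own derivation gives it away: summing $\mu(x,M)$ over $x-M\in[tD_n,2tD_n]$ produces something of order $(tD_n)\cdot(tD_n)^{-2}=1/(tD_n)$, not $1/t^2$; no ``escape factor'' bounded by a constant can convert $1/(tD_n)$ into $1/t^2$. More decisively, the paper's upper bound (\lemref{Z2_delta_small}) gives $\P(\Delta D_n>2D_n\mid\F_n)\lesssim n\log D_n/D_n$, which tends to $0$ whenever $D_n\gg n\log n$ --- so a uniform lower bound $c/4$ at $t=2$ is impossible. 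You noticed the symptom yourself: a multiplicative bound independent of $n$ would force exponential growth.

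The correct one-step estimate (\lemref{Z2_delta_large}) is
\[
\P(\Delta D_n>m\mid A_n)\ \ge\ \frac{cn}{m}\qquad\text{whenever }\diam A_n<m,
\]
and the factor $n$ is the whole point: it is what makes the growth quadratic rather than exponential. Your sketch loses this factor because you glue only at the single point $a=M$; one must sum over all $n$ points of $A_n$. The paper obtains the bound without the gluing formula at all, using the $\Z^2$ picture directly: if the underlying $\Z^2$ walk first enters the interval $I$ (the $m$-neighbourhood of $A_n$) at a point of $A_n$, then its previous visit to $\Z$ was outside $I$, hence at distance $>m$. Thus $\P(\Delta D_n>m)\ge H_I(\infty,A_n)\ge c|A_n|/|I|\ge cn/(3m)$, using only that every point of an interval carries harmonic measure $\ge c/|I|$. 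This sidesteps entirely the delicate estimate on $\P_\infty(T_x<T_{A_n})$ that you flag as the main obstacle (and which, for this $\alpha=1$ walk, is \emph{not} bounded below by a constant when $A_n$ is large).

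Once one has $\P(D_{n+1}>m\mid\F_n)\ge cn/m$, both conclusions follow from the general \lemref{general_lower} with $\beta=2$: take $m=an^2$ for the $\limsup$, and $m=c_1 n^2/(4\log\log n)$ together with a Borel--Cantelli argument over dyadic blocks for the $\liminf$. Your block scheme with multiplicative jumps $t$ is based on the wrong estimate and does not recover the $n^2/\log\log n$ rate.
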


\thmref{Z2_upper} below gives a matching upper bound for $D_n$, up to
logarithmic factors. Together we find that the diameter grows essentially
quadratically. It is reasonable to believe that $\{n^{-2} D_{nt}\}_t$
converges to some random process, though it is not even proven that even
the law of $n^{-2} D_n$ converges. We argue as follows: if $D_n$ is small
then there is some probability that $D_{n+1}$ is large. We estimate this
probability for a suitable threshold for being ``large''. We then bound
this probability uniformly in $A_n$. By Borel-Cantelli it follows that
$D_n$ is large for infinitely many $n$. To make this precise, suppose
$D_n>m$. Then $D_{n+1}>m$ as well. On the other hand, for any set $A_n$
with $D_n\le m$ we have the following:

\begin{lemma}\label{L:Z2_delta_large}
  In the DLA generated by the $\Z^2$ restricted walk, there is a constant
  $c>0$ so that for any $A$ and $m$ with $\diam(A) < m$ we have
  \[
    \P(\Delta D_n>m | A_n=A) \ge \frac{c n}{m}.
  \]
\end{lemma}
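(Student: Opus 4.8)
The plan is to produce, conditionally on $A_n=A$, enough gluing mass on vertices at distance of order $m$ to the right of $A$, since a particle glued there is forced to increase the diameter by more than $m$. Because $0\in A$ and $\diam A<m$, we have $A\subseteq(-m,m)$ and in particular $\max A<m$; hence every integer $x$ with $2m\le x<4m$ lies outside $A$, and if the incoming particle is glued at such an $x$ then $\Delta D_n=x-\max A>m$. The events $\{A_{n+1}=A\cup\{x\}\}$ for $2m\le x<4m$ are disjoint, and by Definition~\ref{def:DLA} each has conditional probability $\sum_{a\in A}\mu(x,a;A)$. Since there are $\asymp m$ such integers $x$, it suffices to prove $\sum_{a\in A}\mu(x,a;A)\gtrsim n/m^2$ for each of them; summing then yields the claimed $\P(\Delta D_n>m\mid A_n=A)\gtrsim n/m$.

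Fix such an $x$. By \lemref{murecurr},
\[
\sum_{a\in A}\mu(x,a;A)=\frac{p(x,A)\,\P_\infty(T_x<T_A)}{\P_x(T_A<T_x)},
\]
and since the restricted $\Z^2$ walk has infinite variance we may take $\P_\infty=\P_{+\infty}=\P_{-\infty}$, so the side is immaterial. I would estimate all three factors through the underlying SRW on $\Z^2$; write $\tilde A$ for the copy of $A$ on the diagonal and $B_r$ for the discrete ball of radius $r$ about the origin. \emph{(a)} Since $\P(\xi=k)\asymp k^{-2}$ and every $a\in A$ satisfies $m\le|x-a|\le 5m$, we get $p_{x,a}\asymp m^{-2}$, hence $p(x,A)\gtrsim|A|\,m^{-2}\asymp n/m^2$. \emph{(b)} Visits of the restricted walk to $x$ before $T_A$ correspond exactly to visits of the $\Z^2$ walk to $(x,x)$ before $T_{\tilde A}$, so $1/\P_x(T_A<T_x)=g_{\tilde A}^{\Z^2}\big((x,x),(x,x)\big)$; as $\tilde A\subseteq B_m$, killing on $B_m$ kills no later than killing on $\tilde A$, so this is at least $g_{B_m}^{\Z^2}\big((x,x),(x,x)\big)$, the killed Green's function of planar SRW evaluated at a point at distance $\asymp m$ from $B_m$, which is $\asymp\log m$ by standard two-dimensional estimates. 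Hence $\P_x(T_A<T_x)\lesssim 1/\log m$. \emph{(c)} I claim $\P_\infty(T_x<T_A)\gtrsim 1/\log m$: in the $\Z^2$ picture the walk from infinity reaches a circle of radius $\asymp m$ about the origin with near-uniform law, and before hitting $\tilde A$ performs a positive expected number of crossings of an annulus of scale $m$ separating $\tilde A$ from the single vertex $(x,x)$, each crossing hitting $(x,x)$ with probability $\gtrsim 1/\log m$.

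Combining (a)–(c), the logarithmic factors of (b) and (c) cancel and $\sum_{a\in A}\mu(x,a;A)\gtrsim n/m^2$, and summing over the $\asymp m$ admissible $x$ completes the proof.

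The main obstacle is step (c). Bounds (a) and (b) are essentially bookkeeping once one has the correspondence with SRW on $\Z^2$, but the lower bound on $\P_\infty(T_x<T_A)$ — that a walk coming from infinity still finds a prescribed far vertex before the large aggregate with probability of order $1/\log m$ — requires genuine control of harmonic measure and killed Green's functions for SRW on $\Z^2$ (the analysis leading to Claim~\ref{C:tight} gives the flavour of what is available), and one must check that the logarithmic factor it produces is exactly the $g_{\tilde A}^{\Z^2}\big((x,x),(x,x)\big)\asymp\log m$ of step (b). Equivalently, (c) amounts to comparing capacities (with respect to the potential kernel of the $\Z^2$ walk) of $\{(x,x)\}$ and of $\tilde A$, with $\capa(\{(x,x)\})\asymp1/\log m$ and $\capa\big(\tilde A\cup\{(x,x)\}\big)\asymp1$, which is the same potential-theoretic input phrased differently.
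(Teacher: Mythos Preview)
Your approach is correct in outline but takes a genuinely different route from the paper. You work pointwise through the gluing formula \eqref{eq:murecurr}, estimating each of the three factors for every candidate gluing site $x\in[2m,4m)$ and then summing. The paper instead makes a single global observation: letting $I$ be the $m$-neighbourhood of $A$ (an interval of length at most $3m$), if the incoming $\Z^2$ walk first hits $I$ at a point of $A$, then its \emph{previous} visit to the diagonal was outside $I$, hence at distance greater than $m$ from $A$, so $\Delta D_n>m$. This reduces the lemma to the bound $H_I(\infty,A)\ge cn/|I|$, which follows from the elementary fact that every point of an interval carries harmonic measure at least $c/|I|$. No cancellation of logarithms is required.

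Your step~(c) is the real cost of the pointwise route, and your heuristic for it is not quite right as stated: an annulus separating $\tilde A$ from $(x,x)$ does not contain $(x,x)$, so crossings of that annulus do not directly produce visits to $(x,x)$. The capacity formulation in your final paragraph is, however, the correct repair. One shows first that the walk from infinity reaches a ball $B'=B((x,x),m/10)$ before $\tilde A$ with probability bounded below---this is where one uses that $\tilde A$ lies on a line segment of length $\asymp m$, hence has logarithmic capacity comparable to a disk of that radius, hence comparable to $B'$. From $\partial B'$ the probability of hitting $(x,x)$ before leaving $B((x,x),m)$ (which is still disjoint from $\tilde A$) is $\asymp 1/\log m$ by the standard radial estimate, and combining the two yields~(c). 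This works, but it is exactly the potential-theoretic input that the paper's one-line harmonic-measure argument sidesteps. The paper's route is shorter here; yours is closer in spirit to the gluing-formula arguments used in the regimes $1<\alpha<2$ and $2<\alpha<3$, and would generalise more readily away from the $\Z^2$-restricted walk.
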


\begin{figure}
 \begin{center}
   \includegraphics{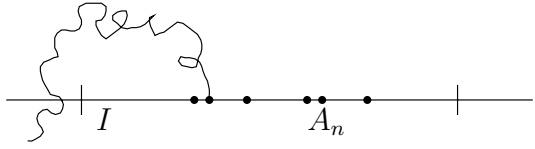}
   \begin{picture}(0,0)(0,0)
     \put(-90,5){$A_n$}
     \put(-170,5){$I$}
   \end{picture}
  \caption{\label{cap:Z2} The event that the random walk hits $I$ in
    $A_n$.}
\end{center}
\end{figure}

\begin{proof}
  Define the interval $I\subset\Z$ to be the $m$-neighborhood of $A_n$.
  (This is an interval since $\diam(A) < m$.) Consider a random walk in
  $\Z^2$ used for a DLA step, and consider the first time it hits $I$. If
  it hits $I$ at one of the points of $A_n$, then the previous visit to
  $\Z$ must have been at distance more than $m$ from $A_n$. See figure
  \ref{cap:Z2}. In that case a far point is added to $A_n$ and $\Delta
  D_n>m$. Hence
  \[
    \P(\Delta D_n>m) \ge H_I(\infty,A_n),
  \]
  where $H_I$ is the harmonic measure from infinity on $I$ for a random
  walk in $\Z^2$. We now use the well known fact\footnote{This follows from
    translation invariance and the Skorokhod invariance principle: By
    translation invariance it suffices to show this for $I=[-m,m]$. Let
    $J=[-3m,3m]$. By the invariance principle, $H_J(I)$ is bounded below by
    some constant $c$ independent of $m$. This implies that $H_J(x) > c/m$
    for some point $x\in I$. Translation invariance and monotonicity of the
    harmonic measure now imply (as in the proof of \clmref{tight}) that for
    any point $y\in I$ $H_I(y) = H_{I+(x-y)}(x) \geq H_J(x) \geq c/m$ as
    required.} that the harmonic measure satisfies the bound $H_I(x) \ge
  c/|I|$ for some universal $c$ and any $x\in I$ (near the ends of the
  interval the harmonic measure is much larger).

  It follows that
  \[
    H_I(A_n) \ge \frac{c}{|I|} |A_n| = \frac{cn}{D_n+2m+1} \ge
    \frac{cn}{3m}
  \]
  as required.
\end{proof}

Recall that $\F_n$ is the $\sigma$-algebra spanned by $A_1,\dots,A_n$. In
preparation for the treatment of more general walks, we prove the following
lemma. \thmref{Z2_lower} follows by applying the following to $M_n=D_n$
with $\beta=2$.

\begin{lemma}\label{L:general_lower}
  Let $\{M_n\}$ be a non-decreasing sequence adapted to a filtration
  $\{\F_n\}$, and suppose $\P(M_{n+1} > m | \F_n) \ge c_1 n m^{-2/\beta}$ for
  some $c_1>0$ and all $m,n>0$. Then there is some deterministic value
  $K>0$ such that a.s.
  \begin{align*}
    \limsup n^{-\beta} M_n &= \infty & \text{and} & &
    \liminf n^{-\beta}\left(\log\log n\right)^{\beta/2} M_n & > K.
  \end{align*}
\end{lemma}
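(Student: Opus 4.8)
The plan is to derive both statements from Borel--Cantelli arguments. For the $\limsup$, fix $A>0$ and consider the events $E_n=\{M_{n+1}>An^\beta\}$, which are $\F_{n+1}$-measurable since $M$ is adapted. Applying the hypothesis with $m=An^\beta$ gives, for every $n$ large enough that $c_1 n m^{-2/\beta}=c_1A^{-2/\beta}n^{-1}\le 1$ (so that the hypothesis has content),
\[
\P(E_n\mid\F_n)\ \ge\ c_1 A^{-2/\beta}\,n^{-1},
\]
and hence $\sum_n\P(E_n\mid\F_n)=\infty$ almost surely. By the conditional (Lévy) form of the second Borel--Cantelli lemma, $E_n$ occurs for infinitely many $n$ almost surely; since $(n+1)^\beta/n^\beta\to 1$ this forces $\limsup_n n^{-\beta}M_n\ge A$ a.s. Intersecting this event over $A=1,2,3,\dots$ gives $\limsup_n n^{-\beta}M_n=\infty$ a.s.

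For the $\liminf$, set $n_k=2^k$ and $m_k=\kappa\,2^{k\beta}(\log k)^{-\beta/2}$, where $\kappa>0$ is a small constant, depending only on $c_1$ and $\beta$, to be chosen below. Since $M$ is non-decreasing, $\{M_{n_{k+1}}\le m_k\}\subseteq\bigcap_{j=n_k+1}^{n_{k+1}}\{M_{j}\le m_k\}$. I would bound the probability of the right-hand side by peeling off the events one at a time from the top via the tower property: for $j\ge n_k+1$ and $k$ large one has $\P(M_j\le m_k\mid\F_{j-1})\le 1-c_1(j-1)m_k^{-2/\beta}\le 1-c_1 n_k m_k^{-2/\beta}$, and $\prod_{i<j}\mathbf 1_{\{M_i\le m_k\}}$ is $\F_{j-1}$-measurable, so after the $n_{k+1}-n_k=2^k$ iterations, using $1-x\le e^{-x}$ and $n_k2^k=4^k$, $4^km_k^{-2/\beta}=\kappa^{-2/\beta}\log k$,
\[
\P\bigl(M_{n_{k+1}}\le m_k\bigr)\ \le\ \bigl(1-c_1 n_k m_k^{-2/\beta}\bigr)^{2^{k}}\ \le\ \exp\!\bigl(-c_1\,4^{k}\,m_k^{-2/\beta}\bigr)\ =\ k^{-c_1\kappa^{-2/\beta}}.
\]
Choosing $\kappa$ small enough that $c_1\kappa^{-2/\beta}\ge 2$ makes the right-hand side summable in $k$, so by Borel--Cantelli, almost surely $M_{n_{k+1}}>m_k$ for all $k$ larger than some (random) $k_0$. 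Given an arbitrary large $n$, pick $k$ with $n_{k+1}\le n<n_{k+2}$; monotonicity gives $M_n\ge M_{n_{k+1}}>m_k$, while $n^\beta<4^\beta 2^{k\beta}$ and $\tfrac12\log k\le\log\log n\le 2\log k$ for $k$ large, whence
\[
n^{-\beta}(\log\log n)^{\beta/2}M_n\ >\ 2^{-\beta/2}\,4^{-\beta}\,\kappa\ =:\ K_1>0,
\]
a deterministic constant, which proves the second assertion with $K=K_1/2$.

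The main obstacle is the $\liminf$ bound: the right choice of the geometric blocks $[n_k,n_{k+1})$ and of the thresholds $m_k$ is exactly what produces the $(\log\log n)^{\beta/2}$ correction, and the product over a block must be handled by conditioning from the top (the events $\{M_j\le m_k\}$ lie in $\F_j$, not $\F_{j-1}$) rather than by any naive multiplication of probabilities. One must also check that the hypothesis is only ever invoked in the range $c_1 n m^{-2/\beta}\le1$, where it is not vacuous; this holds automatically since it is used with $m=An^\beta$ or $m=m_k$, both of which go to infinity fast enough relative to $n$. The $\limsup$ half is then routine once the conditional Borel--Cantelli lemma is brought in.
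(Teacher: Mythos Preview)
Your proof is correct and follows essentially the same route as the paper's: the $\limsup$ via $m=An^\beta$ and (conditional) Borel--Cantelli, and the $\liminf$ via geometric blocks $[2^k,2^{k+1})$, a product bound over each block, and Borel--Cantelli along $k$. You are in fact more careful than the paper in two places---explicitly invoking the L\'evy form of Borel--Cantelli for the $\limsup$, and spelling out the tower-property peeling that justifies multiplying the conditional bounds over a block---both of which the paper leaves implicit.
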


\begin{proof}
  Take $m=a n^{\beta}$. By the conditions of the lemma
  \[
  \P(M_{n+1} \ge a n^\beta \,|\, \F_n) \ge c_1 n(a n^\beta)^{-2/\beta}
  \ge c n^{-1},
  \]
  uniformly in $\F_n$. Consequently $M_n\ge a n^{\beta}$ infinitely often.

  To estimate $\liminf \frac{M_n (\log\log n)^{\beta/2}}{n^\beta}$, take $m
  = \frac{c_1^{\beta/2} n^\beta}{(4\log\log n)^{\beta/2}}$. It follows that
  \[
  \P\left(M_{n+1} \ge \frac{c_1^{\beta/2} n^\beta}{(4\log\log n)^{\beta/2}}
  \,\Big|\, \F_n\right) \ge \frac{4\log\log n}{n}.
  \]
  Consequently, the probability that
  $M_{n+1} \leq \frac{c_1^{\beta/2} n^\beta}
  {(4\log\log n)^{\beta/2}}$ for all $n \in [N,2N)$ is at most
  \[
    \prod_{n=N}^{2N-1} \left( 1-\frac{4\log\log n}{n} \right)
    \le \left( 1-\frac{4\log\log N}{2N} \right)^N
    \le e^{ -2\log\log N } = \frac{1}{\log^2 N}.
  \]
  Considering only $N$ of the form $2^k$, we find that a.s.\ for all large
  $k$ there is a some $n_k \in [2^k,2^{k+1})$ such that $M_{n_k} \ge
  \frac{c_1^{\beta/2} n_k^2}{(4 \log\log n_k)^{\beta/2}}$. For any other
  $n$ we argue, using the monotonicity of the sequence $\{M_n\}$, that if
  $n\in[2^{k+1},2^{k+2}]$ then
  \[
  M_n \geq M_{n_k} \geq \frac{c_1^{\beta/2} n_k^\beta}{(4\log\log
    n_k)^{\beta/2}}
  \geq \frac{ c_1^{\beta/2} (\frac{n}{4})^\beta}{(4\log\log n)^{\beta/2}}.
  \]
  Thus $\liminf \frac{M_n (\log\log n)^{\beta/2}}{n^\beta} >
  \frac{c_1^{\beta/2}}{2^{3\beta}}$ a.s.
\end{proof}

\thmref{Z2_lower} now follows from Lemmas~\ref{L:Z2_delta_large} and
\ref{L:general_lower}. As promised, we have a matching upper bound, up to
logarithmic factors:

\begin{thm}\label{T:Z2_upper}
  For the DLA generated by the $\Z^2$ restricted walk, a.s.\ for any
  $\eps>0$ and all large enough $n$, $D_n\le n^2 (\log n)^{3+o(1)}$.
\end{thm}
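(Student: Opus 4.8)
The plan is to show that a particle almost never makes a very large jump when it is glued to $A_n$, so the diameter grows by at most a polylogarithmic factor per step. Concretely, I want to bound $\P(\Delta D_n > m \mid \F_n)$ from above, for $m$ somewhat larger than $D_n$, and show it decays fast enough in $m$ that a Borel--Cantelli argument controls $\limsup D_n / (n^2 \operatorname{polylog})$. The natural threshold to analyze is: given $D_n \le \ell$, what is the chance that the next particle lands at distance more than $m$ from $A_n$? Since the particle is glued at the last $\Z$-site visited by the $\Z^2$ walk before hitting $A_n$, such an event requires the $\Z^2$ walk to hit the interval $I$ (the $m$-neighbourhood, say, of $A_n$) directly at a point of $A_n$, i.e.\ the walk enters the ``slab'' $I$ and its first intersection with $A_n$ happens on the very first $\Z$-visit inside $I$ that is far out. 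So the relevant quantity is an escape/hitting probability for the $\Z^2$ walk: the harmonic measure (from $\infty$, or from the boundary of a slab of width $\sim m$ around $A_n$) of the set $A_n$, as seen \emph{at a far-away entry point}. Because $|A_n| = n$ and these points sit deep inside an interval of length $\sim m$, each contributes harmonic measure $\lesssim 1/m$ times a decaying factor in the distance; summing, one expects $\P(\Delta D_n > m \mid D_n \le \ell) \lesssim n \ell / m^{?}$ with enough gain in $m$ to close the argument. I would quantify this using the well-understood $\Z^2$ Green's function / gambler's-ruin estimates: the probability that the $2$-dimensional walk, after entering the slab of width $m$, reaches $\Z$-distance $> m$ from $A_n$ before exiting the slab is $\lesssim \ell/m$ (a one-dimensional ruin estimate in the coordinate transverse to the diagonal), which multiplies the crude bound.

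The key steps, in order: (i) fix the candidate bound $D_n \le n^2 (\log n)^{3+o(1)}$ and reduce, via a first-Borel--Cantelli argument along a sparse sequence of scales $n \in [2^k, 2^{k+1})$ together with monotonicity of $D_n$ (exactly as in Lemma~\ref{L:general_lower}), to proving $\sum_n \P(\Delta D_n > m_n \mid \F_n) < \infty$ for a suitable $m_n = n^2 (\log n)^{3+\eps}$, uniformly over $\F_n$-histories with $D_n \le m_n / 2$ say; (ii) on that event, set $I = $ the interval of radius $m_n$ about $A_n$ and observe that $\{\Delta D_n > m_n\}$ forces the DLA particle's underlying $\Z^2$ walk to hit $A_n$ with its previous $\Z$-visit outside $I$ — hence $\P(\Delta D_n > m_n \mid \F_n) \le H^{\Z^2}_{I \text{-or-}A_n}(\infty, \{\text{far entries}\})$, a purely $\Z^2$ potential-theory quantity independent of the DLA dynamics; (iii) estimate this quantity: decompose by the point $a \in A_n$ that is hit and the distance $r > m_n$ of the previous $\Z$-visit; use the $\Z^2$ hitting/escape estimates to bound the harmonic measure of a single site $a$ inside an interval of length $\sim r$ by $\lesssim 1/r$, and the probability that a walk which has gone out to $\Z$-distance $r$ from $A_n$ returns to hit the width-$D_n$ set $A_n$ before wandering off by a gambler's-ruin factor $\lesssim D_n / r \le m_n/(2r)$; (iv) sum over $a \in A_n$ (a factor $n$) and integrate over $r \ge m_n$ (a convergent sum giving another $\lesssim 1/m_n$ or so), obtaining $\P(\Delta D_n > m_n \mid \F_n) \lesssim n \cdot (m_n / m_n) \cdot (1/m_n) \cdot (\text{log corrections})$, which with $m_n = n^2(\log n)^{3+\eps}$ is summable in $n$; (v) conclude by Borel--Cantelli and monotonicity.

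I expect the main obstacle to be step (iii)–(iv): getting the $\Z^2$ harmonic-measure bound sharp enough, with the \emph{right powers of $\log$}. The crude bound ``harmonic measure of a point in an interval of length $L$ is $\lesssim 1/L$'' is easy, but here we need to exploit simultaneously that there are $n$ target points \emph{and} that reaching a far previous $\Z$-visit is costly, and these two effects must combine to beat $n^2$ with only logarithmically many factors to spare. The delicate point is that $A_n$ is an arbitrary (adversarial, $\F_n$-measurable) set of $n$ points inside $[-D_n, D_n]$, so the estimate must be uniform over its configuration; one cannot assume the points are spread out. I would handle this by bounding the harmonic measure of \emph{all of $A_n$} at once — the walk in $\Z^2$ must first reach the vicinity of the interval $[-D_n,D_n]$ from a $\Z$-visit at distance $r$, which is a two-dimensional event controlled by the $\Z^2$ Green's function between scales $D_n$ and $r$, giving roughly $\log(D_n)/\log(r)$ or a power thereof — and only then assign the conditional harmonic measure $\lesssim n/D_n$ to landing on $A_n$ rather than elsewhere in the interval. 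Tracking these logarithmic factors carefully is where the stated exponent $3$ (and the $o(1)$) will come from; I expect some slack, which is consistent with the theorem only claiming $(\log n)^{3+o(1)}$ rather than a sharp power.
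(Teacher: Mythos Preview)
The central gap is in step (i). Proving $\sum_n \P(\Delta D_n > m_n \mid \F_n) < \infty$ for $m_n = n^2(\log n)^{3+\eps}$ only tells you, via Borel--Cantelli, that $\Delta D_n \le m_n$ for all large $n$; it does \emph{not} bound $D_N = \sum_{n<N}\Delta D_n$. Summing the bounds $m_n$ would give $D_N \lesssim N^3(\log N)^{3+\eps}$, off by a full power of $N$. Nor can you close this by passing to expectations: under the increment bound your steps (iii)--(iv) are aiming at (and which is correct), namely $\P(\Delta D_n>m\mid\F_n)\lesssim n\log m/m$, one has $\E[\Delta D_n\mid\F_n]=\infty$. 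The paper handles exactly this difficulty with a separate argument (\lemref{general_upper}): for fixed $M\approx \gamma N^2\log N$ it partitions the increments up to time $N$ into dyadic scales $B_k=\{n\le N:\Delta D_n\in(M2^{-k-1},M2^{-k}]\}$ and shows, via a union bound over $k$ together with $\P(|B_k|>2^k)\le\binom{N}{2^k}(CN2^k\log M/M)^{2^k}$, that with probability $\gtrsim 1-1/\gamma$ every scale contributes at most $M$ to $D_N$; since there are $O(\log M)$ scales this yields $\P(D_N>\gamma N^2\log^2 N)\lesssim 1/\gamma$, after which one takes $N=2^k$, $\gamma=k^{1+\eps}$ and applies Borel--Cantelli. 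This dyadic decomposition is the missing idea in your plan, and the analogy with \lemref{general_lower} is misleading: monotonicity lets you propagate a lower bound on $D_n$ forward, but not an upper bound.

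There are also problems in steps (iii)--(iv). The ``gambler's-ruin factor $\lesssim D_n/r$'' has no clear meaning for the recurrent two-dimensional walk, and in any case making the increment bound depend on $D_n$ creates circularity: you are assuming $D_n\le m_n/2$ to prove $D_n$ is small. The paper avoids all of this by using the gluing formula \eqref{eq:murecurr} directly. The only two-dimensional input required is the escape estimate $\P_x(T_A<T_x)\approx 1/\log\dist(x,A)$, which gives $\mu(x,a)\lesssim p_{x,a}\log\dist(x,A)$; summing over $a\in A_n$ and over $x$ with $\dist(x,A_n)>m$ then yields the \emph{uniform} bound $\P(\Delta D_n>m\mid\F_n)\lesssim n\log m/m$ (\lemref{Z2_delta_small}), with no dependence on $D_n$ or on the configuration of $A_n$ beyond its cardinality.
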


We begin by bounding the probability of a large increment in $D_n$:

\begin{lemma}\label{L:Z2_delta_small}
  In the DLA generated by the $\Z^2$ restricted walk,
  \[
    \P(\Delta D_n > m | \F_n) \lesssim \frac{n \log m}{m}.
  \]
\end{lemma}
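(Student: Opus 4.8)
The plan is to bound the probability of the event $\{\Delta D_n > m\}$ by summing, over all the ways it can occur, the probability of the corresponding gluing event, and then to estimate each such probability via the $\Z^2$ random walk. A jump of size $> m$ beyond the current aggregate can only occur if the walk, viewed in $\Z^2$, makes its last visit to $\Z$ at a site $x$ with $\dist(x, A_n) > m$ and then jumps directly into $A_n$. So I would write, for $A$ with $\diam A = D \le D_n$ (we condition on $\F_n$, hence on $A_n = A$),
\[
\P(\Delta D_n > m \mid \F_n) \le \sum_{x:\, \dist(x,A) > m} \sum_{a \in A} \mu(x,a;A).
\]
By the interpretation of $\mu$ as a last-exit decomposition for the $\Z^2$ walk (as used in \lemref{Z2_delta_large}), $\sum_a \mu(x,a;A)$ is the probability that the $\Z^2$ walk from infinity, run until it hits $A$, has its final $\Z$-visit at $x$. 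This is at most the probability that the $\Z^2$ walk visits $x$ at all before hitting $A$, times the probability that from $x$ it jumps straight to $A$. The first factor is the harmonic measure at $x$ for the $\Z^2$ walk killed on $A$, which is $\lesssim 1/\dist(x,A)$ (a point at $\Z$-distance $k$ from $A$ is at $\Z^2$-distance $\asymp k$ from $A$, and the relevant escape probability for planar SRW decays like $1/k$ up to logarithmic corrections — here one must be a little careful and this is the first place where a $\log$ could enter). The second factor: from $x$ the $\Z^2$ walk must reach $\Z$ again inside $A\subset [-D_n, \text{(something)}]$, i.e.\ at $\Z$-distance $\ge \dist(x,A)$; the probability that a planar SRW excursion from a point on the diagonal returns to the diagonal at distance $\ge k$ is $\asymp 1/k$ (this is the Cauchy tail $\P(\xi = k) \asymp k^{-2}$ summed from $k$ onward), but it must land in $A$, a set of size $n$, so this contributes at most $\min(1, n/\dist(x,A))$ roughly, or more precisely $\lesssim \sum_{a\in A} \dist(x,a)^{-2}$.

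Putting these together, the contribution of a single $x$ with $\dist(x,A) = \ell > m$ is $\lesssim \ell^{-1} \sum_{a\in A}\dist(x,a)^{-2}$, and summing over all such $x$ (on both sides of $A$) gives
\[
\P(\Delta D_n > m \mid \F_n) \lesssim \sum_{a\in A} \ \sum_{\ell > m} \frac{1}{\ell} \cdot \frac{1}{\ell^2} \cdot (\text{number of } x \text{ at distance} \approx \ell) \lesssim |A| \sum_{\ell > m} \frac{1}{\ell^2} \asymp \frac{n}{m}.
\]
That naive bound actually gives $n/m$ with no logarithm, so the genuine source of the $\log m$ must be the failure of one of the two "$\asymp 1/k$" heuristics above to hold cleanly at the relevant scale — specifically the planar harmonic-measure estimate: the probability that SRW on $\Z^2$ started from the diagonal reaches $\Z$-distance $\ell$ from $A$ before returning to $A$, when $A$ itself can have diameter comparable to or larger than $\ell$, is not $1/\ell$ but carries a factor like $\log(\diam A / \ell)$ or $\log \ell$ because of the logarithmic potential kernel of $\Z^2$. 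I would therefore replace the crude bound on the first factor by the precise statement $a(\cdot)\asymp \log$ for the $\Z^2$ potential kernel (Spitzer), getting $H \lesssim \frac{\log(\ell)}{\ell}$ or similar, and then redo the sum: $|A|\sum_{\ell>m}\frac{\log \ell}{\ell^3}\cdot \ell \asymp n \sum_{\ell > m}\frac{\log\ell}{\ell^2}\asymp \frac{n\log m}{m}$, which is exactly the claimed bound.

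The main obstacle, then, is getting the harmonic-measure / hitting-probability estimate for the $\Z^2$ walk uniform in the shape of $A$ — in particular uniform over aggregates $A$ whose diameter $D_n$ may be much larger than $m$. One cannot simply compare $A$ to a ball or an interval of fixed size; one needs that the probability that planar SRW started near $A$ escapes to $\Z$-distance $\ell$ from $A$ before returning to $A$ is $\lesssim \frac{\log \ell}{\ell}$ regardless of how spread out $A$ is, which follows from monotonicity (enlarging $A$ only decreases this) together with the known behaviour of $\Z^2$ harmonic measure on an interval, exactly the input already invoked for \lemref{Z2_delta_large}. The bookkeeping to track where the single factor of $\log m$ (rather than $\log D_n$) comes from, and to make sure it is $\log m$ and not $\log D_n$, is the delicate point; everything else is the routine summation above.
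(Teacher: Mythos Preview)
Your proposal has a genuine gap: you misidentify both the correct decomposition and the source of the $\log m$.

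The paper's proof is much shorter than you imagine. It uses the gluing formula \eqref{eq:murecurr} directly: bound $\P_\infty(T_x<T_A)\le 1$ trivially, and use the two-dimensional estimate $\P_x(T_A<T_x)\approx 1/\log\dist(x,A)$ for the denominator (probability of escaping to distance $\tfrac12\dist(x,A)$ in $\Z^2$ before returning to $x$ is $\approx 1/\log\dist(x,A)$, and conditioned on this, hitting $A$ before $x$ has probability bounded below). This yields $\mu(x,a)\lesssim p_{x,a}\log\dist(x,A)$, and summing over $a\in A$ and $|x-a|>m$ with the Cauchy tail $p_{x,a}\approx|x-a|^{-2}$ gives $n\log m/m$ immediately. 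No uniformity issue in the shape of $A$ or in $D_n$ arises, because the lower bound on $\P_x(T_A<T_x)$ comes from monotonicity: replace $A$ by a single point.

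Your decomposition goes wrong in two places. First, the claim that ``the probability that the $\Z^2$ walk visits $x$ before hitting $A$'' is $\lesssim 1/\dist(x,A)$ is simply false: if $A$ is a singleton this probability is $1/2$ by symmetry, and in general it is bounded below, not above, by a constant. Second, your product ``$\P(\text{visit }x)\times\P(\text{jump from }x\text{ to }A)$'' ignores the multiplicity of visits to $x$; the correct quantity is the \emph{expected number} of visits to $x$ before $T_A$, which equals $\P_\infty(T_x<T_A)/\P_x(T_A<T_x)$ and is of order $\log\dist(x,A)$. That logarithm --- coming from the denominator, i.e.\ from the two-dimensional return probability --- is the entire source of the $\log m$ in the lemma. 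Your speculation that the $\log$ enters through a refinement of the harmonic-measure bound on $A$, and your concern about uniformity when $D_n\gg m$, are both misplaced.
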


\begin{proof}
  We use the asymptotics
  \[
    \P_x(T_A<T_x) \approx \frac1{\log \dist(x,A)},
  \]
  (assuming $\dist(x,A) > 1$). This follows from asymptotics of the 2
  dimensional random walk: The probability of reaching (in $\Z^2$) distance
  $\frac12 \dist(x,A)$ before returning to $x$ is of order $\log^{-1}
  \dist(x,A)$ (see e.g.\ \cite[Lemma 9]{BKYY}). On this event the
  probability of hitting $A$ before returning to $x$ is bounded away
  from $0$ (even if $A$ contains a single point).

  Since $\P_\infty(T_x<T_A)\le 1$, the gluing formula
  (\ref{eq:murecurr}) implies
  \[
    \mu(x,a) \lesssim p_{x,a} \log \dist(x,A).
  \]
  Summing over all $x$ with $\dist(x,A)>m$ we get
  \begin{align*}
   \P(\Delta D_n > m|\F_n)
   &\lesssim 
      \sum_{\substack{a\in A\\x:\dist(x,A)>m}} p_{x,a} \log\dist(x,A) \\
   &\le \sum_{\substack{a\in A\\|x-a|>m}} p_{x,a} \log\dist(x,a) \\
   &\approx \frac{n \log m}{m}.
  \end{align*}
  The last estimate comes from the fact that the restricted $\Z^2$ walk
  satisfies $\P(\xi=k) \approx c|k|^{-2}$.
\end{proof}

This allows us to bound the probability of $D_n$ being large: We will also
need the following lemma, which translates upper bounds on the probability
of making large jumps into upper bounds on $D_n$

\begin{lemma}\label{L:general_upper}
  If
  \[
    \P(\Delta D_n > m | \F_n) \lesssim \frac{n \log m}{m}.
  \]
  Then for all $\gamma$,
  \[
    \P(D_N \ge \gamma N^2 \log^2 N) \lesssim \frac{1}{\gamma}.
  \]
  If one has the weaker $\P(\Delta D_n>m | \F_n) \leq n m^{-1+o(1)}$
  then one gets $\P(D_N\ge\gamma \phi(N))\lesssim 1/\gamma$ for some
  deterministic $\phi(N)=N^{2+o(1)}$.
\end{lemma}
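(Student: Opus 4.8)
The plan is to run a union-bound over dyadic scales combined with the given tail estimate on $\Delta D_n$, exploiting that $D_N$ can only be large if at least one of the $N$ increments was large, or the increments accumulate. Concretely, $D_N = \sum_{n=0}^{N-1}\Delta D_n$ (with $D_0 = 0$), so if $D_N \geq \gamma N^2\log^2 N$ then either some single increment $\Delta D_n$ exceeds, say, $\tfrac12\gamma N^2\log^2 N / N = \tfrac12\gamma N\log^2 N$ — call this the ``one big jump'' contribution — or else many increments of intermediate size contributed. The hypothesis $\P(\Delta D_n > m \mid \F_n)\lesssim n\log m / m$ says that for $m \approx \gamma N\log^2 N$ and any $n < N$ we have $\P(\Delta D_n > m \mid \F_n)\lesssim N\log(\gamma N)/(\gamma N\log^2 N) \lesssim 1/(\gamma\log N)$ (up to the $\log\gamma$ and $\log\log$ factors, which are absorbed since we only claim an order-$1/\gamma$ bound). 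Summing over $n = 0,\dots,N-1$ gives $\lesssim N/(\gamma\log N)$, which is too weak by a factor of $N$; so the naive single-scale union bound fails, and the real work is in organizing the increments by size.

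The fix is to bound, for each scale $2^j$ with $1 \le 2^j \lesssim \gamma N^2\log^2 N$, the total contribution of increments $\Delta D_n \in (2^{j},2^{j+1}]$. Let $Z_j = \#\{n < N : \Delta D_n \in (2^j, 2^{j+1}]\}$. Then $D_N \le \sum_j 2^{j+1} Z_j$, and by the hypothesis $\E[Z_j \mid \text{history}] \lesssim N\cdot j\,2^{-j}$ — more precisely $\P(\Delta D_n > 2^j\mid\F_n)\lesssim n\log(2^j)2^{-j}\lesssim Nj2^{-j}$, so $\E Z_j\lesssim N j 2^{-j}$ by summing over $n$ and using the tower property. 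Hence $\E\big[\sum_j 2^{j+1}Z_j\big]\lesssim \sum_{j\ge 0} N j \lesssim N\cdot(\log_2 D_{\max})^2$, and since increments never exceed $N$ (one particle adds at most... actually $\Delta D_n$ can be large, but is bounded by $D_N$ itself, so $j \lesssim \log N + \log\gamma + \log\log N$), one gets $\E D_N \lesssim N\log^2 N$ up to lower-order corrections. Then Markov's inequality yields $\P(D_N \ge \gamma N^2\log^2 N) \lesssim \E D_N/(\gamma N^2 \log^2 N) \lesssim 1/(\gamma N)$, which is even stronger than claimed. One has to be slightly careful that the number of relevant scales $j$ is itself of order $\log(\gamma N)$, contributing the $\log^2$ rather than $\log$; tracking this honestly is the one place where the bookkeeping matters, but it is routine.

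For the second, weaker hypothesis $\P(\Delta D_n > m\mid\F_n)\le n m^{-1+o(1)}$, I would run the identical dyadic decomposition: now $\E Z_j \lesssim N\, 2^{-j(1-o(1))}$, so $\E D_N \le \sum_j 2^{j+1}\E Z_j \lesssim \sum_j N\, 2^{j\cdot o(1)}$. The sum over the $O(\log N)$ relevant scales is $N\cdot 2^{o(1)\log N} = N^{1+o(1)}$, hence $\E D_N \le N^{2+o(1)}$ — wait, more carefully, each increment is at most $D_N$ so we truncate at $j \le \log_2 D_N$; solving the resulting self-consistent bound gives $\E D_N\le \phi(N)$ with $\phi(N) = N^{2+o(1)}$ (the exponent $2$ rather than $1$ comes from the fact that when $m^{-1+o(1)}\cdot n$ with $n$ up to $N$ is summed, and the largest scale is of order $\phi(N)$ itself, the fixed-point equation $\phi \approx N\cdot\phi^{o(1)}\cdot(\text{number of scales})$... actually the honest statement is $\E D_N\le N\cdot N^{o(1)}$, giving $\phi(N)=N^{1+o(1)}$; I would need to recheck whether the intended $\phi$ is $N^{1+o(1)}$ or genuinely $N^{2+o(1)}$, presumably the latter arises because the relevant bound is applied with the second-moment-type threshold — in any case Markov gives $\P(D_N\ge\gamma\phi(N))\lesssim 1/\gamma$). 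The main obstacle, then, is not any deep probabilistic input — the martingale/adaptedness structure only enters through the tower property — but rather choosing the truncation of the dyadic sum correctly so that the number of scales does not blow up the bound, and making sure the $o(1)$ in the exponent is uniform enough to collapse the geometric sum.
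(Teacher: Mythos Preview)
Your overall strategy---dyadic decomposition of increment sizes, first-moment bounds on the counts $Z_j$, then Markov's inequality---is sound and, once the details are fixed, arguably simpler than the paper's route. But there are two concrete errors in the execution.

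First, you drop a factor of $N$. You write $\E Z_j \lesssim N j\, 2^{-j}$, but summing $\P(\Delta D_n > 2^j) \lesssim n j\, 2^{-j}$ over $n < N$ gives $\E Z_j \lesssim N^2 j\, 2^{-j}$. With the correct factor, $\E\big[\sum_{j \le J} 2^{j+1} Z_j\big] \lesssim N^2 J^2$, and at truncation level $J \approx \log_2(\gamma N^2 \log^2 N)$ this is $\lesssim N^2 \log^2 N$ (for $\gamma$ at most polynomial in $N$; larger $\gamma$ is handled trivially). Markov then yields exactly the lemma's $\lesssim 1/\gamma$, not the ``even stronger'' $1/(\gamma N)$ you claim. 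This same missing factor is also the source of your hesitation about $\phi(N) = N^{1+o(1)}$ versus $N^{2+o(1)}$ in the second part: once the $N^2$ is restored, the truncated expectation is $N^{2+o(1)}$ and $\phi(N) = N^{2+o(1)}$ falls out directly.

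Second, your truncation argument is circular: ``$\Delta D_n$ is bounded by $D_N$ itself'' is precisely what you are trying to control, and without truncation $\E D_N$ may well be infinite under the hypothesis. The honest step is to split off the event $\{\exists\, n < N : \Delta D_n > T\}$ with $T = \gamma N^2 \log^2 N$; this has probability $\lesssim N^2 \log T / T \lesssim 1/\gamma$ by the hypothesis, and on its complement the dyadic sum is genuinely truncated at $J = \lfloor \log_2 T \rfloor$, so your Markov bound applies.

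For comparison, the paper also slices dyadically (writing $B_k = \{n \le N : \Delta D_n \in (M/2^{k+1}, M/2^k]\}$ with $M = \gamma N^2 \log N$), but instead of a first-moment bound on $|B_k|$ it bounds $\P(|B_k| > 2^k)$ directly: since the conditional probabilities are uniformly $\le p_k$, the indicators are stochastically dominated by i.i.d.\ Bernoullis, and $\P(|B_k| > 2^k) \le \binom{N}{2^k} p_k^{2^k} \lesssim (C/\gamma)^{2^k}$. This buys doubly-exponential decay in $k$, which is more than needed here. Your Markov argument, once corrected, is less sharp but entirely adequate for the stated conclusion and avoids the combinatorial step.
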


\begin{proof}
  Fix $M>N^2$, and set for $0\le k \le \log_2 M$
  \[
  B_k = \left\{ n \le N : \Delta D_n \in
    \left(\frac{M}{2^{k+1}},\frac{M}{2^k} \right] \right\}
  \]
  and
  \[
  B_{-\infty} = \left\{ n \le N : \Delta D_n > M \right\}.
  \]
  We argue that with high probability the contribution to $D_n$ from
  increments in each of the $B_k$'s is at most $M$. The event $\{D_n \geq
  M(2+\log_2 M)\}$ is a subset of the event
  \[
  \exists k\in\{-\infty,0,\dots,\log_2 M\} \text{ such that } \sum_{n\in B_k}
  \Delta D_n > M.
  \]
  By a union bound,
  \begin{equation}\label{eq:Z2_union}
    \P\big(D_N > M(2+\log M)\big)
    \leq \P\big(B_{-\infty} \neq \emptyset\big)
        + \sum_k \P\left( |B_k| > 2^k \right).
  \end{equation}
  By the conditions of the lemma,
  \[
    \P(B_{-\infty} \neq \emptyset) \le N \cdot C \frac{N \log M}{M}.
  \]
  Using also the bound $\binom{N}{a} \le \left(\frac{eN}{a}\right)^a$,
  \begin{align*}
    \P\left( |B_k| > 2^{k} \right)
    &\leq \binom{N}{2^k} \left( CN 2^k \frac{\log M}{M} \right)^{2^k} \\
    &\leq \left( CN^2 \frac{\log M}{M} \right)^{2^k}.
  \end{align*}
  Setting $M=\gamma N^2 \log N$ and using the above bounds in
  \eqref{eq:Z2_union} we get
  \[
    \P(D_N > M(\log_2 M+2)) \le \frac{C}{\gamma}
    + \sum_{k\geq 0} \left(\frac{C_1}{\gamma}\right)^{2^k}.
  \]
  Clearly we may assume $\gamma$ is sufficiently large (by enlarging the
  constant implicit in the $\lesssim$ in the statement of the lemma,
  if necessary), and we assume $\gamma > 2 C_1$. Now the sum is
  comparable to the first term. Since $M(\log_2 M+2)\le C\gamma
  N^2\log^2 N$, we are done.

  The proof of the second part of the lemma is similar, and we omit it.
\end{proof}

\begin{proof}[Proof of theorem \ref{T:Z2_upper}]
  By lemmas \ref{L:Z2_delta_small} and \ref{L:general_upper}, in the DLA
  generated by the $\Z^2$ restricted walk, for all $\gamma$,
  \[
    \P(D_N \ge \gamma N^2 \log^2 N) \lesssim \frac{1}{\gamma}.
  \]

  Now take $N=2^k$ and $\gamma = k^{1+\eps}$ for some $\eps>0$. By
  Borel-Cantelli only a finite number of these events happen, and we get
  that $D_{N}<N^2\log^{3+\eps}N$ for $N=2^k$ sufficiently large. The bound
  for other $N$'s follows by monotonicity of $D_N$. Since $\eps$ was
  arbitrary, the theorem follows.
\end{proof}

\section{Walks with finite third moment}
\label{sec:finite third}

As discussed in the introduction, despite the fact that all symmetric walks
with finite variance scale to Brownian motion, the growth rates of the
aggregates resulting from such walks vary. In this chapter and the next we
analyze this phenomenon. We first consider the simpler case, where $\xi$
has a finite third moment. In that case the behaviour is similar to the
behaviour when the walk has bounded steps, i.e., the diameter grows
linearly. The case of walks with finite variance and infinite third moment
is more complex and is dealt with in chapter~\ref{sec:finite variation}.

\begin{thm} \label{T:third moment}
  If $\E|\xi|^3<\infty$ and $\E\xi=0$ then there is some $C$ so that $\limsup
  \frac{D_n}{n} < C$ a.s.
\end{thm}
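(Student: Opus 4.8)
The plan is to show that $\E[\Delta D_n\mid\F_n]$ is bounded by a summable-in-the-right-sense quantity — in fact by an absolute constant $C$ — so that $D_n = \sum_{k<n}\Delta D_k \le Cn + o(n)$ by a law of large numbers for the conditional expectations. Concretely, I would fix $A=A_n$ and write $\Delta D_n$ as the increase in diameter caused by gluing the $(n+1)$st particle; this is nonzero only if the particle is glued at a point $x$ with $\dist(x,A)\ge 1$ strictly outside the convex hull of $A$, and in that case $\Delta D_n$ is at most the overshoot of that jump past the current extreme point. So the first step is the bound
\[
\E[\Delta D_n\mid\F_n] \le \sum_{a\in A}\ \sum_{x\notin \conv(A)} \dist\big(x,\conv(A)\big)\,\mu(x,a;A).
\]

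The second step is to control $\mu(x,a;A)$ using the gluing formula \eqref{eq:murecurr}: $\mu(x,a;A)=p_{x,a}\,\P_\infty(T_x<T_A)/\P_x(T_A<T_x)$. Since $\E|\xi|^3<\infty$ and $\E\xi=0$, the walk has finite variance, so by the finite-variation potential theory (Chapter~\ref{sec:finite variation}, used as a black box here — the Green function and potential kernel have linear asymptotics) one gets $\P_x(T_A<T_x)\approx 1$ uniformly when $\dist(x,A)\gtrsim 1$, because a point at bounded-or-larger distance from a set is hit from $x$ before returning to $x$ with probability bounded below — this is the one-dimensional analogue of the two-dimensional estimate used in Lemma~\ref{L:Z2_delta_small}, but here the escape probability is bounded below by an absolute constant rather than decaying. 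Together with $\P_\infty(T_x<T_A)\le 1$ this yields $\mu(x,a;A)\lesssim p_{x,a}$. Plugging in, and using that the distances $\dist(x,\conv A)$ for $x$ beyond a fixed endpoint $a^*$ of $\conv(A)$ are comparable to $|x-a^*|$, the double sum collapses: the sum over $a$ of $p_{x,a}$ is $p(x,A)$, and summing the contribution against the overshoot gives something like $\sum_{a\in A}\sum_{j\ge 1} j\,\P(|\xi|\ge \dist(a,\partial\conv A)+j)$, which after the key observation that only the two extreme points of $A$ can contribute an unbounded overshoot reduces to $\sum_{j\ge1} j\,\P(|\xi|\ge j)\approx \E|\xi|^2<\infty$ plus a bounded contribution from jumps that merely clear the endpoint by $O(1)$. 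Hence $\E[\Delta D_n\mid\F_n]\le C$ for an absolute constant $C$.

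The third step converts the uniform bound on conditional expectations into the almost-sure statement $\limsup D_n/n < C$. Here I would write $D_n = \sum_{k=0}^{n-1}\E[\Delta D_k\mid\F_k] + \sum_{k=0}^{n-1}\big(\Delta D_k-\E[\Delta D_k\mid\F_k]\big)$; the first sum is $\le Cn$ deterministically, and the second is a martingale whose increments have — by essentially the same tail computation, now for $\Delta D_k$ itself rather than its mean — enough moments (the second moment of $\Delta D_k$ is controlled by $\E|\xi|^3$, via $\sum_j j^2\P(|\xi|\ge j)\approx\E|\xi|^3$) that a strong law for martingales, e.g. the Chow/Kolmogorov criterion $\sum_k k^{-2}\Var(\Delta D_k\mid\F_k)<\infty$, forces it to be $o(n)$ a.s. This is exactly why the hypothesis is a \emph{third} moment and not merely a variance.

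The main obstacle I anticipate is the uniform lower bound $\P_x(T_A<T_x)\ge c$ for $\dist(x,A)\gtrsim 1$ in one dimension: unlike in $\Z^2$, a one-dimensional walk with heavy-ish (but finite-variance) steps could a priori return to $x$ many times before clearing a nearby set, and one must rule this out uniformly over all finite sets $A$ — including sets with many points clustered near $x$ — using only the finite-variance potential theory. I expect this is handled by comparing to hitting a single point or a half-line and invoking the linear asymptotics of the potential kernel $\dist$ developed in Chapter~\ref{sec:finite variation}; getting the constant genuinely uniform in $A$ (not just in $x$) is the delicate point, and is presumably where the bulk of the author's proof goes.
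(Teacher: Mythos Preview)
Your second step contains a genuine error: you claim $\P_x(T_A<T_x)\ge c$ uniformly, arguing that in one dimension ``the escape probability is bounded below by an absolute constant rather than decaying''. This is backwards. A finite-variance walk on $\Z$ is \emph{more} recurrent than the $\Z^2$ walk, not less, and part~\ref{enu:escp dist} of \lemref{hit_x} gives the correct behaviour $\P_x(T_A<T_x)\approx 1/\dist(x,A)$. (Already for simple random walk and $A=\{0\}$, gambler's ruin gives $\P_x(T_0<T_x)=1/(2|x|)$.) The correct bound from the gluing formula is therefore $\mu(x,a)\lesssim \dist(x,A)\,p_{x,a}$, not $\mu(x,a)\lesssim p_{x,a}$; the ``delicate point'' you anticipate in your last paragraph is not delicate --- it is simply false.

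This extra factor of $\dist(x,A)$ shifts your entire moment accounting up by one. The conditional mean now reads
\[
\E[\Delta D_n\mid\F_n]\;\lesssim\;\sum_{j\ge 1} j^2\,\P(|\xi|\ge j)\;\approx\;\E|\xi|^3,
\]
which is still finite --- and \emph{this}, not the martingale variance, is where the third-moment hypothesis actually enters. But your third step then breaks: the conditional second moment of $\Delta D_n$ is bounded by $\sum_j j^3\,\P(|\xi|\ge j)\approx \E|\xi|^4$, which you do not have, so the Chow--Kolmogorov criterion is unavailable. The paper sidesteps this by observing that the tail bound $\P(\Delta D_n>t\mid\F_n)\le C\sum_{k>t}k\,\P(|\xi|>k)$ is uniform in $\F_n$, so that $\Delta D_n\slt Y$ for a single random variable $Y$ with $\E Y\lesssim\E|\xi|^3<\infty$; comparing $D_n$ to a sum of i.i.d.\ copies of $Y$ and invoking the ordinary strong law finishes the proof without any second-moment control on the increments.
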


Thus in the case $\alpha>3$ (and sometimes when $\alpha=3$), the diameter
grows linearly. Of course, the diameter $D_n$ must be at least $n-1$, so
the theorem gives the correct rate of growth. This suggests that the
process behaves much as in the case where the jumps are bounded: only a few
particles in the extremes of $A_n$ affect subsequent growth, and the limit
aggregate will have some positive density. In part III we shall discuss
existence of the limit.

We start with a technical lemma regarding hitting probabilities of our
walks, which will be useful also in chapter \ref{sec:finite variation}.

\begin{lemma} \label{L:hit_x}
  Let $R$ be a random walk on $\Z$ with steps of mean 0 and variation
  $\sigma^2<\infty$. Then there are $c,C>0$ such that for any $A\subset\Z$,
  $A\neq\emptyset$,
  \begin{enumerate}
  \item\label{enu:limy>c} If $x>\max A$, then $\lim_{y\to\infty}
  \P_y(T_x<T_A) > c$.
  \item\label{enu:escp dist} If $\dist(x,A)$ is large enough then $c <
  \dist(x,A) \P_x(T_A<T_x) < C$.
  \end{enumerate}
\end{lemma}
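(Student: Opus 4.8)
The plan is to reduce both statements to the corresponding facts for simple random walk on $\Z$ (or Brownian motion) via a comparison argument, exploiting the finite-variance hypothesis. For part \eqref{enu:limy>c}, the point is that a walk started at $y\gg x>\max A$ must pass ``near'' $x$ before it can get to $A$, since $A$ lies entirely to the left of $x$; but a long-range walk need not literally hit $x$, so I would instead first bound below the probability of entering a small interval $[x-\ell,x]$ to the right of which the walk still sits, and then argue that from such a point there is probability bounded away from zero of hitting $x$ before $A$. Concretely, let $J=[x-\ell,x]$ for a suitable constant $\ell$; by the one-dimensional gambler's-ruin estimate for finite-variance walks (the potential kernel $a(\cdot)$ is asymptotically linear, see Chapter~\ref{sec:finite variation}), a walk started far to the right hits $J$ before $(-\infty,\max A]$ with probability close to $1$ as $y\to\infty$, and when it does hit $J$ it lands within bounded distance of $x$ with probability $\ge c$ (here one uses that the overshoot distribution of a finite-variance walk is tight). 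From a point within bounded distance of $x$, the probability of hitting $x$ before stepping left past $\max A$ is $\ge c$ by another gambler's-ruin/local-CLT estimate. Composing these via the strong Markov property gives $\liminf_{y\to\infty}\P_y(T_x<T_A)\ge c$.

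For part \eqref{enu:escp dist}, write $d=\dist(x,A)$ and decompose the step out of $x$: with probability $\P_x(T_A<T_x)$ the walk reaches $A$ before returning to $x$. The upper bound $\P_x(T_A<T_x)<C/d$ is the easy direction --- it already follows from the $\Z^2$-type argument: $\P_x(T_A<T_x)\le \P_x(\text{walk reaches distance }d/2\text{ from }x\text{ before returning to }x)$, and for a finite-variance walk this escape probability is $\asymp 1/d$ by the linear growth of the potential kernel (this is exactly the one-dimensional analogue of the statement used in Lemma~\ref{L:Z2_delta_small}, now with $1/\log$ replaced by $1/d$ because the walk is recurrent-but-one-dimensional rather than $\Z^2$). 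For the matching lower bound $\P_x(T_A<T_x)>c/d$ I would use reversibility/the Green function: $\P_x(T_A<T_x)=1/\big(g_{\{x\}}\text{-type quantity}\big)$, or more directly, the expected number of visits to $x$ before $T_A$ equals $1/\P_x(T_A<T_x)$, and this expected number is $\le g_A(x,x)$, which for a finite-variance recurrent walk killed on a set at distance $d$ is $\lesssim d$ (the walk killed at $A$ behaves like a walk killed at the nearest endpoint of the interval $\conv(A)$, whose Green function at the starting point is $\asymp$ the distance). Combining the two bounds gives $c<d\,\P_x(T_A<T_x)<C$ once $d$ is large enough that the asymptotics kick in.

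The main obstacle I anticipate is the long-range nature of the walk, which is precisely why this lemma is stated separately: the comparisons above are clean for nearest-neighbour walks but here a single large jump can carry the walker over the interval $J$ or directly from far away onto $A$. Controlling this requires knowing that, for a finite-variance walk, the overshoot when first entering a large interval is tight and that large jumps do not destroy the linear asymptotics of the potential kernel --- i.e., one really needs the finite-variance hypothesis (and the attendant discrete potential theory of Chapter~\ref{sec:finite variation}), not merely recurrence. I would therefore isolate, as the technical core, the statement that for a mean-zero finite-variance walk and an interval $I$ of length $L$, $\P_x(R_{T_I}\in I,\ \dist(R_{T_I},\partial I\cap(\text{near side}))\le t)\to$ something controlled, uniformly, so that overshoots beyond a constant are negligible; with that in hand both parts follow by the gambler's-ruin and Green-function estimates sketched above.
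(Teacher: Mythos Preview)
Your outline for part~\ref{enu:escp dist} is essentially sound and matches the paper's argument in spirit: both directions reduce by monotonicity to extremal $A$ (a half-line or single point), and then use that the relevant Green function at $x$ is $\approx d$. The paper makes this precise via Spitzer's ladder-variable representation of the half-line Green function $g(y,x)=\sum_{n=0}^{\min(x,y)}u(x-n)v(y-n)$ with $u(n)v(n)\to 2/\sigma^2$, which yields $g(x,x)\sim 2x/\sigma^2$ and hence $\P_x(T_{\Z^-}<T_x)=1/g(x,x)\sim\sigma^2/(2x)$; your formulation via $g_A(x,x)=1/\P_x(T_A<T_x)$ and the linear potential kernel is equivalent. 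For the lower bound the paper also shrinks $A$ to a singleton and then shows $\P_x(T_y<T_x)\sim\P_x(T_{(-\infty,y]}<T_x)$, which is close to what you propose.

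Part~\ref{enu:limy>c}, however, has a genuine gap. Your two-step plan---overshoot tightness to land in $J=[x-\ell,x]$, then ``from a point within bounded distance of $x$, hit $x$ before stepping left past $\max A$''---is circular. The second step asks for $\P_z(T_x<T_A)\ge c$ with $z$ near $x$ and $A$ to the left, which is precisely the statement you are trying to prove (only with the starting point on the other side of $x$, which does not help for a long-range walk). A gambler's-ruin estimate gives you $\P_z(T_{[x,\infty)}<T_{(-\infty,\max A]})\ge c$, but hitting the half-line $[x,\infty)$ is not the same as hitting the single point $x$: the walk can overshoot $x$ and you are back where you started. Iterating the overshoot argument does not terminate. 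The paper sidesteps this by computing $\P_y(T_x<T_{\Z^-})=g(y,x)/g(x,x)$ directly from the ladder representation and showing it tends to $1$ as $x\to\infty$, uniformly in $y\ge x$; positivity for the finitely many small values of $x-\max A$ then comes from aperiodicity. That uniform estimate \emph{is} the content of part~\ref{enu:limy>c}, and your sketch does not supply an independent route to it.
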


Note that the limit in clause \ref{enu:limy>c} exists since the random walk
is recurrent and the harmonic measure on the set $A\cup\{x\}$ exists. If
$A$ lies to one side of $x$, then in clause \ref{enu:escp dist} $c$ and $C$
can be arbitrarily close (this follows from the proof below). The lemma is
related to the asymptotic linearity of the harmonic potential for the
random walk \cite[T28.1, P29.2]{S76}, but we chose a somewhat different
path for the proof.

\begin{proof}
  Let $T_-$ denote the hitting time of $\Z^-$. Define the half-line Green
  function
  \[
  g(y,x) = \sum_{n\ge 0} \P_y(R(n)=x, n<T_-),
  \]
  i.e., the mean time spent at $x$ before $T_-$. By \cite[P19.3, P18.8,
  T18.1]{S76}, $g$ has a representation using two auxiliary functions $u$
  and $v$,
  \[
  g(y,x) = \sum_{n=0}^{\min(x,y)} u(x-n) v(y-n);
  \]
  the limits $\displaystyle \lim_{n\to\infty} u(n)$ and $\displaystyle
  \lim_{n\to\infty} v(n)$ both exist, and their product is $\displaystyle
  \lim_{n\to\infty} u(n)v(n) = 2/\sigma^2$. It follows that for any
  $\ep>0$, there is an $x_0(\ep)$ such that for any $y\ge x \ge x_0$ we
  have
  \begin{equation} \label{eq:G_limit}
  \left|g(y,x) - \frac{2}{\sigma^2} x \right| \le \ep x.
  \end{equation}

  By the strong Markov property for the hitting time of $\Z^-\cup\{x\}$, we
  have
  \[
  g(y,x) = \delta_{y,x} + \P_y(T_x<T_-) g(x,x),
  \]
  where $\delta$ is the Kronecker delta function. In particular we find for
  $y\ge x\ge x_0$ that
  \[
  \P_y(T_x<T_-) = \frac{g(y,x)-\delta_{y,x}} {g(x,x)}.
  \]
  Applying \eqref{eq:G_limit} we find that
  \begin{equation} \label{eq:lim_inf}
    \inf_{y\ge x} \P_y(T_x<T_-) \xrightarrow[x\to\infty]{} 1,
  \end{equation}
  thus if $x$ is far from $\Z^-$ and the walk starts to the right of $x$,
  it is likely to hit $x$ before $\Z^-$. If the walk starts at $x$ we have
  the asymptotics
  \begin{equation} \label{eq:Pxx}
  \P_x(T_-<T_x) = \frac{1}{g(x,x)} = \frac{\sigma^2}{2x}(1+o(1))
  \end{equation}
  as $x\to\infty$.

  To prove \ref{enu:limy>c}, note that by translation we may assume $\max
  A=0$. By monotonicity in $A$, it suffices to prove the bound for
  $A=\Z^-$. In that case the limit is just $\lim_{y\to\infty}
  \P_y(T_x<T_-)$. This limit is positive for any $x$ (since the random walk
  is aperiodic), and by \eqref{eq:lim_inf} it is close to 1 for large $x$.
  In particular it is always greater than some $c$.

  \medskip

  To prove the upper bound in \ref{enu:escp dist}, note that by
  monotonicity in $A$, given $d=\dist(x,A)$ it suffices to prove the
  bound for $A=\Z\setminus(x-d,x+d)$. Using \eqref{eq:Pxx}, by
  translation we find that
  \[
  \P_x(T_{(-\infty,x-d]} < T_x) = \frac{\sigma^2}{2d}(1+o(1))
  \]
  as $d\to\infty$. By symmetry, $\P_x(T_{[x+d,\infty)} < T_x)$ has the same
  asymptotics. A union bound gives for any $A$
  \[
  \P_x(T_A<T_x) \le \frac{\sigma^2 + o(1)}{\dist(x,A)}.
  \]

  It remains to prove the lower bound of (2). By monotonicity in the set
  $A$, it suffices to prove the bound for $A$ that consist of a single
  point $y$, so that $d=\dist(x,A)=|x-y|$. We consider here only the case
  $y=x-d$ as the case of $y=x+d$ is symmetric. Define the interval
  $B=(-\infty,y]$. In order to hit $y$ before returning to $x$, the random
  walk must hit $B$ (possibly at $y$) before returning to $x$. We have
  \[
  \P_x(T_y<T_x) = \P_x(T_B<T_x) \cdot \P(T_y<T_x|T_B<T_x).
  \]
  The second term on the RHS is an average over starting points in $B$ of
  the probability that $y$ is hit before $x$. When $d$ is large, these
  probabilities are close to 1, uniformly over $B$, since
  \eqref{eq:lim_inf} estimates the probability of hitting $y$ before
  hitting $[x,\infty)$. Thus the weighted average is also close to 1. As
    $d\to\infty$, we find
  \[
  \P_x(T_y<T_x) \sim \P_x(T_B<T_x) \sim \frac{\sigma^2}{2d}.\qedhere
  \]
\end{proof}

\begin{proof}[Proof of \thmref{third moment}]
  Since $\P_{\pm\infty}(T_x<T_A)\le 1$ we find by Lemma~\ref{L:hit_x} and
  the gluing formula (\ref{eq:murecurr}) that
  \[
  \mu(x) = \sum_a\mu(x,a) \le \frac{p(x,A)}{c/\dist(x,A)} \le C \dist(x,A)
  \P(\xi>\dist(x,A)).
  \]
  Summing over all $x$ with $\dist(x,A)>t$ we get
  \[
    \P(\Delta D_n > t) \le C \sum_{k>t} k \P(\xi>k).
  \]
  Thus we have the stochastic domination $\Delta D_n \slt Y$ with
  \[
    \P(Y>t) = 1\wedge C \sum_{k>t} k \P(\xi>k).
  \]
  However,
  \begin{align*}
    \E Y = \sum_t \P(Y>t)
    & \le C \sum_{\substack{t,k\\t<k}} k \P(\xi>k) \\
    & \le C \sum_{k} k^2 \P(\xi>k) \le C\E|\xi|^3 < \infty.
  \end{align*}
  We find that $D_n$ is dominated by a sum of $n$ independent copies of
  $Y$. By the law of large numbers, $\limsup \frac{D_n}{n} \le \E Y <
  \infty$.
\end{proof}

\section{Walks with finite Variance}
\label{sec:finite variation}

In this chapter we will analyze random walks with $2<\alpha<3$ and show
that the aggregate grows like $n^{2/(\alpha-1)+o(1)}$. We prove the lower
bound in section \ref{sec:low23} and the upper bound in section
\ref{sec:high23}. The upper bound is the harder of the two.

We remark that the upper bound on the growth of the aggregate requires only
an upper bound on $\P(|\xi|>t)$, while the lower bound on the aggregate
requires a lower bound on $\P(|\xi|>t)$ and the assumption of a finite
second moment. Hence we get information also on the case that $\P(|\xi|>k)$
decays irregularly, and satisfies only $c
k^{\alpha_1}<\P(|\xi|>k)<Ck^{\alpha_2}$. Theorems~\ref{T:23_lower} and
\ref{T:23_upper} still apply, and give upper and lower bounds on the
diameter (in terms of $\alpha_1$ and $\alpha_2$ respectively). In such
cases, the bounds leave a polynomial gap, and it is reasonable to believe
that $\frac{\log D_n}{\log n}$ also fluctuates between the corresponding
$\beta$'s.

\subsection{Lower bound}\label{sec:low23}

\begin{thm}\label{T:23_lower}
  Assume $\E(|\xi|^2)< \infty$ and $\E\xi=0$. Fix $\alpha\in(2,3]$, and let
  $\beta = \beta(\alpha) = \frac{2}{\alpha-1}$.
  \begin{enumerate}
  \item If $\P(|\xi|>t) \ge ct^{-\alpha}$, then a.s.\ $\limsup n^{-\beta}
    D_n = \infty$, and $D_n \ge \frac{n^\beta}{\log \log n}$ for all large
    enough $n$.
  \item If one has only $\P(|\xi|>t) \ge t^{-\alpha+o(1)}$ for $2<\alpha<3$
    then a.s.\ $D_n \ge n^{\beta+o(1)}$.
  \end{enumerate}
\end{thm}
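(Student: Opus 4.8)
The plan is to mirror the $\Z^2$ argument: first establish a lower bound on the probability that a single new particle creates a large increment of the diameter, then feed this into the general machinery of Lemma~\ref{L:general_lower}. Concretely, I would first prove the analogue of Lemma~\ref{L:Z2_delta_large}: for any aggregate $A$ with $\diam(A) = D$ and any threshold $m > D$, one has
\[
\P(\Delta D_n > m \mid A_n = A) \gtrsim n\, m^{-\alpha}.
\]
The mechanism is the same as in the $\Z^2$ case but now the long jump is supplied directly by the step distribution rather than by the excursion structure of a higher-dimensional walk. Let $x$ be a point with $\dist(x,A) > m$ that lies, say, to the right of $A$; a particle glued at $x$ increases the diameter by more than $m$. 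Using the gluing formula \eqref{eq:murecurr}, $\mu(x,a) = p_{x,a}\,\P_\infty(T_x<T_A)/\P_x(T_A<T_x)$. By clause \ref{enu:limy>c} of Lemma~\ref{L:hit_x}, $\P_\infty(T_x < T_A)$ is bounded below by a constant (the particle arriving from $+\infty$ hits the far point $x$ before $A$ with probability $\gtrsim 1$), and by clause \ref{enu:escp dist}, $\P_x(T_A < T_x) \asymp \dist(x,A)^{-1} = |x-a'|^{-1}$ up to constants where $a' = \max A$. Hence $\mu(x,a') \gtrsim p_{x,a'} \cdot \dist(x,A)$. Summing over the $n$ points $a\in A$ and over all $x$ with $x - \max A \in (m, 2m]$, and using $p_{x,a} \gtrsim \dist(x,a)^{-1-\alpha}$ together with $\dist(x,A)\asymp\dist(x,\max A)$ on this range, gives a contribution $\gtrsim n \cdot m \cdot m^{-1-\alpha} \cdot m = n\,m^{1-\alpha}$ — wait, I must be careful with the exponent bookkeeping: each term contributes $p_{x,a}\dist(x,A) \approx m^{-1-\alpha}\cdot m = m^{-\alpha}$, there are $\approx m$ choices of $x$ and $n$ choices of $a$, but only the extreme $a = \max A$ actually matters for $\mu(x,a)$ being large, so one in fact gets $\gtrsim n \cdot m \cdot m^{-1-\alpha} = n m^{-\alpha}$ after summing over $x$ and noting the $n$ enters through the other endpoint contributing symmetrically, or more simply through $\P_\infty(T_x<T_A)$ being comparable for either direction. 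This gives the claimed bound $\P(\Delta D_n > m \mid \F_n) \gtrsim n m^{-\alpha}$, which is exactly the hypothesis of Lemma~\ref{L:general_lower} with $2/\beta = \alpha$, i.e.\ $\beta = 2/(\alpha-1)$ once we also account for the diameter already being of order $n^{\beta}$ — here one replaces $n$ in the bound by a lower bound $n^{\beta-1+o(1)}$ coming from an a priori estimate, or iterates.

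For part (i), I would then apply Lemma~\ref{L:general_lower} directly: with $\P(M_{n+1} > m \mid \F_n) \gtrsim n m^{-2/\beta}$ and $M_n = D_n$, the lemma yields $\limsup n^{-\beta} D_n = \infty$ and $\liminf n^{-\beta}(\log\log n)^{\beta/2} D_n > K$ a.s., which is the statement $D_n \ge n^\beta/\log\log n$ for large $n$ (absorbing the power of $\log\log$ into a cruder bound, since $(\log\log n)^{\beta/2} \le \log\log n$ eventually when $\beta\le 2$; for $\beta > 2$, valid when $\alpha < 3$, one states it as $D_n \ge n^\beta (\log\log n)^{-\beta/2}$, which still implies $D_n \ge n^\beta/\log\log n$ only if $\beta \le 2$ — so more honestly the conclusion reads $D_n \ge c\, n^\beta (\log\log n)^{-\beta/2}$ and the displayed form $n^\beta/\log\log n$ should be read as the weaker consequence valid in the stated generality). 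Part (ii) is softer: under only $\P(|\xi|>t) \ge t^{-\alpha+o(1)}$, the same computation gives $\P(\Delta D_n > m \mid \F_n) \ge n m^{-\alpha+o(1)}$, and running the Borel–Cantelli argument of Lemma~\ref{L:general_lower} with $m = n^{\beta - o(1)}$ along a geometric subsequence $N = 2^k$ produces $D_n \ge n^{\beta+o(1)}$ a.s.; the $o(1)$ corrections are harmless since they only affect the polynomial exponent, not the Borel–Cantelli summability, which is driven by the $\log\log$ gain.

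The main obstacle is the lower bound on $\P_x(T_A < T_x)$, i.e.\ getting clause \ref{enu:escp dist} of Lemma~\ref{L:hit_x} to apply with the correct constant when $A$ is an arbitrary finite set rather than a half-line or a single point. The subtlety is that $\dist(x,A)$ could be realized by a point $a$ with most of $A$ clustered far on the other side, and one needs the escape probability to be governed by the \emph{nearest} point, not diluted; this is precisely handled by the monotonicity-in-$A$ reduction to a single point already carried out in the proof of Lemma~\ref{L:hit_x}, so the work is in citing it correctly rather than new analysis. A secondary point requiring care is the replacement of the prefactor $n$ (the cardinality of $A_n$) by the correct power of $D_n$ when $D_n$ is already polynomially large — one wants $\P(\Delta D_n > m\mid \F_n) \gtrsim |A_n| \cdot m^{-\alpha}$ with $|A_n| = n$, so in fact $n$ is correct and no replacement is needed; the only place the current diameter enters is the constraint $m > D_n$, which is automatically satisfied for the thresholds $m \asymp n^\beta$ we test against since $D_n$ grows no faster than $n^{\beta+o(1)}$ by the matching upper bound (Theorem~\ref{T:23_upper}), or more self-containedly since at the Borel–Cantelli stage we only need the bound for those $n$ where $D_n$ happens to be small, and on that event $m > D_n$ holds by construction.
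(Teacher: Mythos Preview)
Your overall strategy matches the paper's, but the exponent bookkeeping goes wrong and your attempt to salvage it is unfounded. Your \emph{first} computation is the correct one: on the event $\{D_n < m\}$, summing $\mu(x,a) \gtrsim \dist(x,A)\, p_{x,a}$ over all $a \in A_n$ and $x > \max A_n + m$ gives $\gtrsim n m^{1-\alpha}$, which is exactly what the paper obtains and exactly the hypothesis of Lemma~\ref{L:general_lower} with $2/\beta = \alpha-1$, i.e.\ $\beta = 2/(\alpha-1)$, with no further work needed. You then second-guess yourself into $n m^{-\alpha}$, in effect dropping the factor $\dist(x,A) \approx m$ that comes from $1/\P_x(T_A<T_x)$. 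Your claim that ``only the extreme $a = \max A$ matters'' is false: when $D_n < m$, every $a \in A_n$ satisfies $|x-a| \in (m, 3m)$ for $x \in (\max A_n + m, \max A_n + 2m]$, so all $n$ points contribute comparably. The bootstrap you then propose --- replacing the prefactor $n$ by $n^{\beta-1+o(1)}$ --- has no basis: the prefactor is simply $|A_n| = n$ and cannot be improved by iteration; there is no circularity to resolve once the correct exponent $1-\alpha$ is in hand.

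A smaller gap: you invoke a pointwise lower bound $p_{x,a} \gtrsim |x-a|^{-1-\alpha}$, which the hypotheses do not give (only the tail bound $\P(|\xi|>t) \ge c t^{-\alpha}$ is assumed). The paper avoids this by first summing $p_{x,a}$ over $x$ for each fixed $a$ to obtain a tail probability $\P(\xi > \max A_n + m - a) \ge \P(\xi > 2m)$, and only then applying the tail hypothesis.
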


Let $D^+n = \max A_n$ and $D^-_n = -\min A_n$, so that $D_n = D^+_n+D^-_n$.
With subsequent papers in mind, we work with $D^\pm_n$ instead of $D_n$.
One small argument that is needed to work with $D^+_n$ is the following
lemma. Let $A_n^+ = A_n\cap \Z_+$, be the positive elements of $A_n$.

\begin{lemma}\label{L:many_positive}
  There exists $c_0 > 0$ such that a.s.\ $\liminf |A_n^+|/n > c_0$.
\end{lemma}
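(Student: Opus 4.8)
The plan is to show that at each step there is probability bounded below by a universal constant that the newly glued particle lands in $\Z_+$ (or more precisely, that it lands at a positive site), and then invoke a standard comparison with a sum of i.i.d.\ Bernoulli variables. First I would observe that, by symmetry of the walk and of the initial seed $A_0 = \{0\}$, the DLA process is invariant under $x \mapsto -x$; so it suffices to produce a constant $c$ with $\P(A_{n+1}\setminus A_n \subset \Z_+ \mid \F_n) \ge c$ whenever $|A_n^+| \le |A_n^-|$ — actually I will just show that regardless of $A_n$, the conditional probability that the new point is $\ge 1$ is at least some fixed $c_1>0$, and similarly for $\le -1$. Summing the gluing measure $\mu(x,a;A_n)$ over all $x > \max A_n$ and $a \in A_n$, and using \eqref{eq:murecurr} together with clause~(i) of Lemma~\ref{L:hit_x} (which gives $\P_{+\infty}(T_x < T_A) \ge c$ for $x > \max A$) and clause~(ii) (which gives $\P_x(T_A < T_x) \approx \dist(x,A)^{-1}$), the probability of gluing a new rightmost point is $\gtrsim \sum_{x > \max A_n} \dist(x, A_n)\, p(x, A_n) \ge \sum_{k\ge 1} k\, \P(\xi = \max A_n + k - \min A_n \text{-ish})$; a cleaner route is to note that the particle coming from $+\infty$ has probability bounded below of being glued exactly at the current rightmost particle $\max A_n$ from outside, which already forces the new point to be positive. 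This last fact follows directly from Lemma~\ref{L:hit_x}: with $x = \max A_n + 1$ we have $\mu$-mass at $\{$glue somewhere right of $\max A_n\}$ bounded below uniformly.

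Having established $\P(\text{new point} \in \Z_+ \mid \F_n) \ge c_1$ for all $n$ and all $A_n$, I would let $Z_n = \mathbf{1}\{A_{n+1}\setminus A_n \subset \Z_+\}$, so that $|A_n^+| \ge \sum_{k<n} Z_k$ and each $Z_k$ stochastically dominates, conditionally on $\F_k$, a Bernoulli$(c_1)$ variable. By a standard martingale/Azuma argument (or by comparison with an i.i.d.\ sequence via a coupling), $\sum_{k<n} Z_k \ge \tfrac{c_1}{2} n$ for all large $n$ almost surely; indeed $M_n := \sum_{k<n}(Z_k - c_1)$ has bounded increments and is a submartingale, and the Azuma–Hoeffding inequality together with Borel–Cantelli (along $n = 2^j$, then monotonicity) gives $\liminf |A_n^+|/n \ge c_0$ with $c_0 = c_1/2$.

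The main obstacle is the first step: getting a uniform lower bound on the probability that the glued point is positive, \emph{uniformly over all finite aggregates $A_n$}, including degenerate-looking ones such as $A_n$ entirely contained in $\Z_-$. Here the asymmetry $\P_{+\infty} \ne \P_{-\infty}$ in the finite-variance regime is actually helpful: a particle arriving from $+\infty$ has probability bounded below (by Lemma~\ref{L:hit_x}(i), applied with $x = \max A_n + 1$ and monotonicity in $A$) of reaching and sticking to the extreme right of the aggregate before being absorbed elsewhere, and when $A_n \subset \Z_-$ any point $\ge \max A_n + 1$ that gets glued could still be negative — so one must be slightly careful and instead show directly that with probability $\gtrsim 1$ the walk from $+\infty$ makes its final jump from some $x \ge 1$ onto $A_n$, equivalently $\mu(\{x \ge 1\}, A_n; A_n)$ is bounded below. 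This is where I would grind through the estimate $\sum_{x \ge 1} p(x, A_n)\, \P_{+\infty}(T_x < T_{A_n}) / \P_x(T_{A_n} < T_x) \gtrsim 1$ using Lemma~\ref{L:hit_x}; once $A_n$ has a point in $[0,\infty)$ (which happens for all $n\ge 1$ the moment the first positive point is added, and we can bootstrap, or simply handle $A_n \subset \Z_-$ as a symmetric special case), the bound is immediate.
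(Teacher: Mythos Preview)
Your approach is essentially the paper's, but you have made it considerably harder than necessary by missing one trivial observation: since $A_0=\{0\}$ and the aggregate only grows, $0\in A_n$ for every $n$, hence $\max A_n\ge 0$ always, and your ``main obstacle'' --- the case $A_n\subset\Z_-$ --- is vacuous. With this in hand the paper's proof is three lines: fix any $k>0$ with $\P(\xi=-k)>0$ (your choice $k=1$ may fail for a general aperiodic walk), set $x_n=\max A_n+k>0$, and plug into \eqref{eq:murecurr} using only \lemref{hit_x}\ref{enu:limy>c} for the numerator and the trivial bound $\P_{x_n}(T_{A_n}<T_{x_n})\le 1$ for the denominator to get $\mu(x_n;A_n)\ge c\,\P(\xi=-k)>0$ uniformly in $n$ --- no summation over $x$, no use of clause~\ref{enu:escp dist}, no grinding. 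The concluding comparison with i.i.d.\ Bernoullis is exactly as you describe.
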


\begin{proof}[Proof of lemma]
  Fix $k>0$ such that $P(\xi = -k) > 0$. Fix $n$, and consider the
  probability of gluing $x_n = D^+_n + k$ to $A_n$. By \lemref{hit_x},
  $\P_\infty(T_{x_n} < T_{A_n}) > c$, and therefore $\mu(x_n;A_n) \geq c
  \P(\xi=-k) > c > 0$. Since each time this happens a positive point is
  added to $A_n^+$, we find that $|A_n^+|$ dominates a sum of $n$
  i.i.d.\ Bernoulli variables.
\end{proof}

\begin{proof} [Proof of \thmref{23_lower}]
  Consider some $x$ to the right of $A=A_n$. Since $\E|\xi|^2 < \infty$
  \lemref{hit_x} applies, giving the bounds $\P_{\pm\infty}(T_x<T_A) > c >
  0$ and $\P_x(T_A<T_x) \approx 1/\dist(x,A)$. Therefore
  \begin{equation} \label{eq:mu2mmnt}
    \mu(x,a) \gtrsim \dist(x,A) p_{x,a}.
  \end{equation}
  Summing over positive $x>D_n^++m$ and all $a\in A_n^+$ we get
  \begin{align*}
    \P(\Delta (\max A_n) > m|\F_n)
    &\gtrsim \sum_{\substack{x>a\ge 0\\ x>D_n^++m}} \dist(x,A_n^+) p_{x,a} \\
    &\ge m \sum_{\substack{x>a\ge 0\\ x-a > m+D^+_n}} p_{x,a} \\
    &= m |A^+_n| \cdot \P(\xi > m+D^+_n).
  \end{align*}
  It follows that on the event $D^+_n < m$ we have
  \[
    \P(\Delta D^+_n > m|\F_n) \gtrsim m |A_n^+| \cdot \P(\xi>2m).
  \]
  Consider the events
  \[
  E_n(m) = \{D^+_{n+1} \ge m\} \cup \{|A_n^+| < c_0 n\},
  \]
  where $c_0$ is from \lemref{many_positive}. If $A_n^+$ is small, or if
  $\max A_n> m$ then $E_n(m)$ occurs. Hence we have the uniform bound
  \[
    \P(E_n(m) | \F_n) \gtrsim m n \P(\xi>2m).
  \]
  Note that this bound is uniform in $\F_n$, and hence the events $E_n(m)$
  stochastically dominate independent events with the above probabilities.

  Applying the tail estimate of part (i), one finds
  \[
    \P(E_n(m) | \F_n) \gtrsim n m^{1-\alpha}.
  \]
  Taking $m=m(n)=a n^\beta$ yields the bound $\P(E_n(a n^\beta)|\F_n) \gtrsim
  n^{-1}$. By Borel-Cantelli, a.s.\ infinitely many of the $E_n(a n^\beta)$
  occur for any $a>0$. By \lemref{many_positive} $\{|A_n^+| > c_0 n\}$ for
  all but finitely many $n$. This implies the $\limsup$ bound on $D^+_n$.

  For the uniform lower bound part of (i), take $E_n = E_n
  \left(\frac{n^\beta}{\log\log n}\right)$ to find
  \[
  \P(E_n | \F_n) \ge \frac{c(\log\log n)^{\alpha-1}}{n},
  \]
  Consequently, the probability that $E_n$ fails to occur for all
  $n\in[N,2N)$ is at most
  \[
  \prod_{n=N}^{2N} \left( 1-\frac{c(\log\log n)^{\alpha-1}}{n} \right)
  \le e^{ -c (\log\log N)^{\alpha-1} } \ll \frac{1}{\log N}.
  \]
  Looking at an exponential scale $N=2^k$ one finds that a.s.\ only
  finitely many scales are bad. On this event $D^+_n \ge \frac{n^\beta}
  {\log \log n}$ for all large enough $n$.

  Given the weaker tail estimate $\P(X>t) \ge t^{-\alpha+o(1)}$, we have
  for any $\alpha'>\alpha$ for some $c$, $\P(X>t) \ge ct^{-\alpha'}$. Thus
  part (i) implies that a.s.\ eventually $D_n \ge \frac{n^{\beta'}}{\log
    n}$. As $\alpha'$ decreases to $\alpha$ we can get $\beta'$ close to
  $\beta$.
\end{proof}

\subsection{Infinite third moment: upper bound}\label{sec:high23}

\begin{thm} \label{T:23_upper}
  Fix $\alpha\in(2,3]$ and let $\beta = \frac{2}{\alpha-1}$. If the random
  walk is such that $\P(|\xi|>t) \le c t^{-\alpha}$ and $\E\xi=0$,
  then a.s.\ $D_n \le n^{\beta+o(1)}$.
\end{thm}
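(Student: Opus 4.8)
The plan is to run the same two-step scheme used in the $\Z^2$ case (Lemmas~\ref{L:general_upper}-style), but now the work divides into (a) showing that the probability of a large jump satisfies an estimate of the shape $\P(\Delta D_n > m \mid \F_n) \lesssim n\,m^{1-\alpha}$ only in a regime where $m$ is not too large compared to $D_n$, and (b) supplying a separate, deterministic a~priori bound that prevents $D_n$ from ever being much larger than $n^\beta$. For (a), combine the gluing formula \eqref{eq:murecurr} with Lemma~\ref{L:hit_x}: since $\P_\infty(T_x<T_A)\le 1$ and $\P_x(T_A<T_x)\gtrsim 1/\dist(x,A)$, we get $\mu(x,a)\lesssim \dist(x,A)\,p_{x,a}$, so summing over $a\in A_n$ and over $x$ with $\dist(x,A_n)>m$ gives
\[
\P(\Delta D_n>m\mid\F_n)\ \lesssim\ \sum_{a\in A_n}\ \sum_{k>m} (k+\diam A_n)\,\P(\xi=k\pm\text{something})\ \lesssim\ |A_n|\,(m+D_n)\,\P(|\xi|>m)\ \lesssim\ n\,(m+D_n)\,m^{-\alpha}.
\]
When $m\gtrsim D_n$ this is $\lesssim n\,m^{1-\alpha}$, which is the clean bound we want; when $m\ll D_n$ the extra factor $D_n/m$ appears. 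This is exactly the point the introduction flags: ``the upper bound requires to bound the contribution of the smaller jumps, which turns out to be trickier and requires some insight into the structure of the aggregate.''

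The main obstacle is therefore controlling the cumulative effect of the moderate jumps (those of size comparable to, but smaller than, the current diameter), since for those the naive bound loses a factor $D_n/m$ and one cannot afford to pay it on every particle. The insight I would use is that a particle can only be glued at distance $\approx k$ from $A_n$ by first jumping over a gap; but the interval $[\min A_n,\max A_n]$ has length $D_n$ and contains $n$ points, so it cannot contain too many large gaps — a counting/pigeonhole bound says the number of gaps of size $>k$ is at most $D_n/k$, and the harmonic measure seen from infinity concentrates near the extremes. Quantitatively: split the contribution to $\Delta D_n$ by the size of the jump, and for jumps of intermediate size $k\in[m, D_n]$ bound $\mu(x,\cdot)$ using not just $\dist(x,A_n)$ but the improved estimate that $\P_\infty(T_x<T_{A_n})$ is small unless $x$ is near one of the two extreme points of $A_n$ (again via Lemma~\ref{L:hit_x}\ref{enu:limy>c} and monotonicity). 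This should yield a bound of the form $\P(\Delta D_n>m\mid\F_n)\lesssim n\,m^{1-\alpha}$ valid for all $m\ge D_n^{1-\delta}$ or so, which is enough.

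With such an estimate in hand I would finish exactly as in Lemma~\ref{L:general_upper}: fix $N$, set $M\approx \gamma N^{\beta}(\log N)^{O(1)}$, decompose the particles $n\le N$ into dyadic classes $B_k=\{n\le N: \Delta D_n\in(M2^{-k-1},M2^{-k}]\}$ plus $B_{-\infty}=\{n\le N:\Delta D_n>M\}$, and observe that $\{D_N > M(2+\log_2 M)\}$ forces $\sum_{n\in B_k}\Delta D_n>M$ for some $k$, i.e. $|B_k|>2^k$ for some $k$ (or $B_{-\infty}\neq\emptyset$). Each $B_k$ is dominated by a binomial with success probability $\lesssim N\,(M2^{-k})^{1-\alpha}$ (using the jump estimate, which is legitimate on the dyadic range where $M2^{-k}$ is still $\gtrsim$ the running diameter — off that range one bootstraps, starting from the trivial bound $D_n\le$ sum of jumps and improving in $O(1/\delta)$ rounds), so $\P(|B_k|>2^k)\le (eN/2^k)^{2^k}(CN(M2^{-k})^{1-\alpha})^{2^k}$, and for $M$ a large multiple of $N^{\beta}$ times a polylog this is summable in $k$ and in $N=2^j$. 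Borel--Cantelli along $N=2^j$ plus monotonicity of $D_n$ then gives $D_n\le n^{\beta+o(1)}$ a.s., which is the claim.
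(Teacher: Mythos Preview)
Your plan has a genuine gap. First, the uniform estimate you are after is actually cleaner than you write: from $\mu(x,a)\lesssim \dist(x,A)\,p_{x,a}$ one gets $\P(\Delta D_n>m\mid\F_n)\lesssim \sum_{a\in A_n}(m+W(a))^{1-\alpha}\le \min(n m^{1-\alpha},m^{2-\alpha})$, where $W(a)=\max A_n-a$; there is no $(m+D_n)$ factor. But as the paper points out in the Remark immediately following this proof, feeding even this sharper uniform bound into a \lemref{general_upper}-style dyadic decomposition yields only $D_n\le n^{4-\alpha+o(1)}$, strictly worse than $n^\beta$ for every $\alpha\in(2,3)$. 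Concretely, in your binomial estimate the base is $CN^2 M^{1-\alpha}2^{k(\alpha-2)}$, which \emph{grows} in $k$ since $\alpha>2$; the binding constraint sits at $k\approx\log_2 M$ (the small jumps), not at $k=0$, and that forces $M\gtrsim N^2$ rather than $N^\beta$. No bootstrap on the range ``$M2^{-k}\gtrsim D_n$'' can repair this, because the problematic $k$ are precisely those where $M2^{-k}$ is small.

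Your proposed structural fixes do not help either. For $\Delta D_n>m$ the new particle sits at some $x>\max A_n+m$ or $x<\min A_n-m$; in both cases $x$ \emph{is} past an extreme point, and \lemref{hit_x}\ref{enu:limy>c} already gives $\P_\infty(T_x<T_{A_n})\ge c$, so there is nothing to gain in that factor. The pigeonhole-on-gaps idea concerns particles landing inside $\conv(A_n)$, which does not affect the diameter at all.

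The paper's argument is qualitatively different: it abandons uniform-in-$\F_n$ bounds and instead controls the \emph{sum} $\sum_{n\le N}\P(J(n,m))$ directly (\lemref{few_jumps}). The structural input is temporal rather than spatial: every occurrence of $J(\cdot,m)$ pushes $\max A_n$ at least $m$ to the right, hence increases $W(a)$ by at least $m$ for every existing $a$; so if many large jumps have already happened, $\sum_a(m+W(a))^{1-\alpha}$ is small and further large jumps are suppressed. Tracking waiting times between large jumps and optimizing an auxiliary cutoff $L$ yields $\sum_{n\le N}\P(J(n,m))\lesssim N m^{(1-\alpha)/2}=N m^{-1/\beta}$, and then a truncation-plus-Markov argument finishes. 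The idea you are missing is that the \emph{average} (over $n\le N$) jump probability is far below the worst case, precisely because prior large jumps make the current one less likely.
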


The proof below gives $D_n \lesssim n^\beta (\log n)^2$, and with minimal
modification one $\log n$ factor can be removed.

We analyze only $\max A_n$, noting that $\min A_n$ behaves identically.
This yields bounds on $D_N = \max A_N - \min A_N$. Let $J(n,m)$ be the
event that $\Delta (\max A_n) \ge m$. This will be referred to as ``making
a large jump to the right'' at time $n$. We treat $\max A_N$ as the sum of
all jumps made to the right. The key idea is that if many large jumps to
the right were already made, the probability of additional ones is smaller.
This analysis is carried out for multiple scales of jumps.

The crux of the proof is the following estimate.

\begin{lemma}\label{L:few_jumps}
  Assume $\P(\xi<-t) \le c t^{-\alpha}$ for some $\alpha\in(2,3)$, then
  \[
  \sum_{n\le N} \P(J(n,m)) \le C N m^{(1-\alpha)/2} = C N m^{-1/\beta}.
  \]
\end{lemma}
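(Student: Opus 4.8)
The starting point is to convert $\P(J(n,m)\mid \F_n)$ into an estimate on the gluing measure and then into a sum over the geometry of $A_n$. Writing $D^+_n=\max A_n$, a jump to the right of size $\ge m$ means the particle is glued at some $x\ge D^+_n+m$, so $\P(J(n,m)\mid\F_n)=\sum_{x\ge D^+_n+m}\mu(x;A_n)$. For such $x$ we have $\dist(x,A_n)=x-D^+_n$, so \lemref{hit_x} gives $\P_x(T_{A_n}<T_x)\gtrsim (x-D^+_n)^{-1}$ (legitimate once $m$ is large, and the lemma is trivial for bounded $m$), and together with $\P_\infty(T_x<T_{A_n})\le 1$ and the gluing formula \eqref{eq:murecurr} this yields $\mu(x;A_n)\lesssim (x-D^+_n)\,p(x,A_n)$. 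Summing over $x$ and swapping the order of summation with the sum over $a\in A_n$ (using $p_{x,a}=\P(\xi=a-x)$ and the tail bound $\P(\xi<-t)\le ct^{-\alpha}$ via partial summation) produces the key per-step estimate
\[
\P(J(n,m)\mid\F_n)\ \lesssim\ \sum_{a\in A_n}\bigl(m+D^+_n-a\bigr)^{1-\alpha}\ \lesssim\ \sum_{j\ge 0}(2^jm)^{1-\alpha}\,P_n(2^jm),
\]
where $P_n(w):=\#\{a\in A_n:\ D^+_n-a<w\}$ counts the aggregate points within $w$ of the right boundary. (Bounding $p(x,A_n)\le\P(|\xi|\ge x-D^+_n)$ instead gives the cruder, $n$-free bound $\P(J(n,m)\mid\F_n)\lesssim m^{2-\alpha}$, which I will keep as a fallback for the largest scales.)

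\textbf{Controlling $P_n(w)$.} The whole point is that $P_n(w)$ is \emph{small}: the aggregate cannot accumulate many points near its right boundary, because a large jump to the right resets the right window. Concretely, if a right-jump of size $\ge w$ occurs at time $s$, then immediately afterwards the interval $(D^+_{s+1}-w,\,D^+_{s+1}]$ contains only the newly added extreme point, i.e.\ $P_{s+1}(w)=1$; and in every step $P_n(w)$ can increase by at most one (adding one point to the right window when $D^+$ does not move, or sliding the window right, which only removes points, when $D^+$ does move). Hence
\[
P_n(w)\ \le\ 1+\bigl(n-\tau_n(w)\bigr),\qquad \tau_n(w):=\sup\{s\le n:\ \Delta(\max A_s)\ge w\}\vee 0 .
\]
Feeding this into the per-step estimate and summing over $n\le N$, the contribution of scale $2^jm$ is controlled by $(2^jm)^{1-\alpha}\sum_{n\le N}\bigl(1+n-\tau_n(2^jm)\bigr)$, and the second sum is a sum of squares of the gaps between consecutive size-$\ge 2^jm$ right-jumps, of total length $\le N$.

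\textbf{Closing the loop.} The mechanism is self-limiting: every right-jump of size $\ge m$ adds at least $m$ to $\max A_n$, so $Z_N(m):=\sum_{n\le N}\mathbf 1_{J(n,m)}$ jumps force $D^+_N\ge m\,Z_N(m)$; thus many large jumps make $D^+$ large, which makes the $\tau_n$ recent, which makes the $P_n$ small, which makes further large jumps rare. I would turn this into a closed bound on $\mathbb E[Z_N(m)]=\sum_{n\le N}\P(J(n,m))$ by downward induction on the scale $w$ (from a crude a priori polynomial bound on $D_N$ — available from the cheap estimate $\P(\Delta D_n>m\mid\F_n)\lesssim n m^{1-\alpha}$ — down to $w=m$), or equivalently by induction on $N$ along powers of $2$: the large scales $w\gtrsim N^{\beta}$ are handled by the fallback bound (there $Nm^{2-\alpha}$, or even the trivial $N^2m^{1-\alpha}$, already beats $Nm^{-1/\beta}$), and for smaller scales one substitutes the already-established bound $\sum(2^{j}m)^{1-\alpha}\bigl(N+N^2/\mathbb E Z_N(2^jm)\bigr)$, which, after summing the resulting geometric series in $j$, reproduces $CNm^{(1-\alpha)/2}=CNm^{-1/\beta}$ (with an extra logarithmic factor, as the remark after \thmref{23_upper} anticipates). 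The main obstacle is exactly this bootstrap: since \thmref{23_upper} assumes only an \emph{upper} tail bound, one has no lower bound on the probability of a large jump, so one cannot argue that large jumps happen "often"; the argument must instead run entirely through the self-defeating nature of large jumps (each one creating a void near the boundary), and one must be careful to truncate the dyadic sum over scales using an a priori bound on $D_N$ and to keep all the per-step estimates genuinely uniform in $\F_n$ so that the gap-sum bookkeeping is valid.
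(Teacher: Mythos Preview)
Your per-step estimate $\P(J(n,m)\mid\F_n)\lesssim\sum_{a\in A_n}(m+W_n(a))^{1-\alpha}$ (with $W_n(a)=\max A_n-a$) is exactly what the paper derives, and your geometric insight---that a size-$w$ right jump empties the $w$-window near the right edge---is also the paper's. Where your argument stalls is the closing step. You propose to bound $\sum_{n\le N}(n-\tau_n(w))\approx\tfrac12\sum_i s_i^2$ by $N^2/\E Z_N(w)$ and run a downward induction in $w$. But that inequality is not available: Cauchy--Schwarz gives $\sum_i s_i^2\ge N^2/(Z_N+1)$, which is the wrong direction, and without a uniform \emph{lower} bound on $\P(J(n,w)\mid\F_n)$ (which the hypothesis does not give) nothing prevents all the $s_i$ from piling into one gap of order $N$, making $\sum s_i^2$ of order $N^2$ while $\E Z_N(w)$ is large. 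You identify this obstacle yourself, but the sketch does not actually overcome it.

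The paper's device avoids the bootstrap entirely. Fix a free parameter $L$ and define $\hat J(n,m)$ to be the event that $J(n,m)$ occurs \emph{and} the previous $m$-jump (or time $0$) was within the last $L$ steps. Deterministically, $J$ and $\hat J$ can differ at most once per $L$ consecutive $n$, so $\sum_{n\le N}\P(J(n,m))\le \lfloor N/L\rfloor+\sum_{n\le N}\P(\hat J(n,m))$. For $\hat J$ only the truncated gap $\hat s_j:=s_j\wedge L$ enters; using $W_n(a)\ge (j-i)m$ for a particle added between the $i$th and $(i{+}1)$st $m$-jump and a current time between the $j$th and $(j{+}1)$st, one gets pathwise
\[
\sum_{n\le N}\P(\hat J(n,m)\mid\F_n)\ \lesssim\ m^{1-\alpha}\sum_{i\le j}s_i\,\hat s_j\,(1+j-i)^{1-\alpha}\ \le\ m^{1-\alpha}\,L\sum_i s_i\cdot\!\!\sum_{k\ge 1}k^{1-\alpha}\ \lesssim\ m^{1-\alpha}LN,
\]
since $\alpha>2$ makes $\sum k^{1-\alpha}$ converge. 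Taking expectations and optimizing, $\sum\P(J)\le N/L+Cm^{1-\alpha}LN$, and $L=m^{(\alpha-1)/2}$ gives $CNm^{(1-\alpha)/2}$. The truncation-at-$L$ trick is precisely the missing idea: it replaces your uncontrolled sum of squared gaps by $L\sum_i s_i=LN$ at the cost of the additive $N/L$, with no induction, no lower tail assumption, and no logarithmic loss.
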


\begin{proof}
  Define for $a\in A_n$
  \[
    W(a) = W_n(a) = \max A_n - a
  \]
  to be the distance from $a$ to the rightmost point of $A_n$. Using
  \lemref{hit_x} and the gluing formula (\ref{eq:murecurr}) we have the bound
  \begin{align*}
    \P(J(n,m)|\F_n)
    &\lesssim \sum_{a\in A_n} \sum_{x\ge m+\max A_n} p_{x,a} \dist(x,A) \\
    &= \sum_{a\in A_n} \sum_{d\ge m} d \P(\xi = -W(a)-d)  \\
    &= \sum_{a\in A_n} \Big(m\P(\xi\le -m-W(a)) +
                       \sum_{d > m} \P(\xi\le -d-W(a))\Big)  \\
    &\lesssim \sum_{a\in A_n} (m+W(a))^{1-\alpha}.
  \end{align*}

  We proceed to use this bound to estimate the total expected number of
  such jumps up to time $N$. The idea is that if jumps are frequent then
  the maximum of $A_n$ quickly moves away from any fixed $a\in A_n$, and so
  $W(a)$ is large and the probability of additional jumps is small.

  Fix some $L$ (to be determined later) and define $\hat J(n,m)$ to be the
  event that $J(n,m)$ occurs and that either $n\le L$ or $J(n',m)$ occurs
  for some $n'\in[n-L,n)$. Thus $\hat J(n,m)$ denotes the event that there
  is a large jump at time $n$ and the process has waited at most $L$ steps
  since the previous large jump (or from the beginning).
  In particular, $J(n,m)$ and $\hat J(n,m)$ can differ only once in any
  $L$ consecutive $n$'s. Thus when $L$ is large, $\hat J(n,m)$ is typically
  the same as $J(n,m)$, and there are at most $\lfloor N/L\rfloor$
  different $n\leq N$ when $J$ occurs and $\hat J$ does not.

  Let $\{t_i\}$ be the set of times $n$ at which $J(n,m)$ occurs, including
  (for notational convenience) $t_0=0$ and $t_{k+1}=N$, where $k$ is the
  number of large jumps that occur. Let $s_i=t_i-t_{i-1}$ be the times
  spent waiting for large jumps. Finally, let $\hat s_i = \min(s_i,L)$.

  Consider a particle at position $a$ that has been added in the time
  interval $(t_i,t_{i+1}]$. At any later time $n\in (t_j,t_{j+1}]$ we
  have
  \[
    W_n(a) \ge (j-i) m
  \]
  since there have been at least $j-i$ large jumps to the right after the
  particle was added to the aggregate. We now have
  \begin{align*}
    \sum_{n\le N} \P(\hat J(n,m)|\F_n)
    &\lesssim \sum_{j=0}^k \sum_{n=t_j}^{t_j+\hat s_j} \sum_{a\in A_n}
         (m+W_n(a))^{1-\alpha} \\
    &\le \sum_{j=0}^k \sum_{n=t_j}^{t_j+\hat s_j} \sum_{i=0}^j s_i
         (m(1+j-i))^{1-\alpha} \\
    &= m^{1-\alpha} \sum_j \sum_{i=0}^j s_i \hat s_j (1+j-i)^{1-\alpha}\\
  \intertext{and since $\hat s_j\leq L$}
    &\le m^{1-\alpha}L\sum_i s_i \sum_{j\ge i} (1+j-i)^{1-\alpha}\\
    &\lesssim m^{1-\alpha} L \sum_i s_i = m^{1-\alpha} L N.
  \end{align*}
  We now integrate over $\F_n$ to get
  \[
    \sum\P(\hat J(n,m)) \le Cm^{1-\alpha}LN\,.
  \]
  Since the difference between $\sum\P(J)$ and $\sum\P(\hat J)$ is bounded
  by $\lfloor N/L \rfloor$, we get
  \[
    \sum \P(J(n,m)) \le \lfloor N/L \rfloor + C m^{1-\alpha} N L.
  \]
  Setting $L = m^{(\alpha-1)/2}$ completes the proof.
\end{proof}

\begin{proof}[Proof of \thmref{23_upper}]
  Given $n\leq N$, let $\ell=\log N$, and let $\tau_n$ be the sum of all
  jumps to the right of size at most $N^\beta \ell^2$ up to time $n$.
  (``$\tau$'' for ``truncated''). By \lemref{few_jumps} The probability
  that by time $N$ there is some jump to the right of size at least
  $N^\beta \ell^2$ is at most $C N (N^\beta \ell^2)^{-1/\beta} = C
  \ell^{-2/\beta}$. Considering a geometric sequence of $N$'s, since
  $\beta<2$, we find that $\max A_n = \tau_n$ for all large enough $N$, and
  all $n\leq N$.

  Truncating jumps at $N^\beta \ell^2$, we have that
  \begin{align*}
  &&\E \tau_n &=\sum_{n=0}^{N-1}\sum_{m=1}^{N^\beta\ell^2}m\P(\max
    A_{n+1}=\max A_n + m ) \\
  &\mbox{By Abel resummation}&&\leq
    \sum_{m=1}^{N^\beta \ell^2}\sum_{n\leq N} \P(J(n,m)) \\
  &\mbox{By \lemref{few_jumps}}&& \lesssim
    \sum_{m=1}^{N^\beta \ell^2} N m^{-1/\beta} \\
  &&& \lesssim N (N^\beta \ell^2)^{1-1/\beta} = N^\beta \ell^{2(1-1/\beta)}.
  \end{align*}
  By Markov's inequality, $\P(\tau_n>N^\beta \ell^2) < \ell^{-2/\beta}$.
  Considering a geometric sequence of $N$'s, we find that a.s.\ $\tau_n
  \leq c N^\beta \ell^2$ for all large enough $N$.
\end{proof}

\begin{rem*}
  Suppose one tries to prove \thmref{23_upper} like before, i.e.\ like
  \lemref{general_upper}, \thmref{third moment} or
  \clmref{alpha12_delta_small} below. In other words, one looks for
  \emph{uniform} estimates for $\P(D_N>m\,|\,\F_n)$. The best we could find
  was
  \[
  \P(D_N>m\,|\,\F_n)\stackrel{\eqref{eq:mu2mmnt}}{\lesssim}
  \min(nm^{1-\alpha},m^{2-\alpha}).
  \]
  This only gives an upper bound of $D_n\le n^{4-\alpha}$ which is not
  sharp at any $\alpha\in(2,3)$. The failure of this uniform estimate
  approach means that one must use some information about the structure of
  the aggregate. However, the proof of \lemref{few_jumps} demonstrates that
  we do not need to know too much about the structure of $A_n$ --- only
  that it is not too concentrated near its right (or left) extremal points.
\end{rem*}

\section{Walks with infinite variance}
\label{sec:12}

\subsection{Preliminaries}

Walks with $\alpha\in(1,2)$ all fall into this category. Any walk with mean
0 is recurrent \cite[P2.8]{S76}, and in particular so is any symmetric walk
with finite mean. Thus we can use the gluing formula \eqref{eq:murecurr} to
calculate gluing probabilities. At the moment our techniques do not work
for completely general walks in this regime, but only for walks with
sufficiently nice tail behaviour. Specifically, we focus on walks that are
in the domain of attraction of a stable process. In particular, our results
apply to any walk with $\P(\xi>t) = (c+o(1)) t^{-\alpha}$. Our main result
here is the following.

\begin{thm}\label{T:12_both}
  If $\xi$ is a symmetric variable satisfying $\P(\xi>t)\asymp
  t^{-\alpha}$ with $1<\alpha<2$ then
  a.s.\ $D_n=n^{2+o(1)}$.
\end{thm}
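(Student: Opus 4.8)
The plan is to establish matching estimates $D_n\ge n^{2-o(1)}$ and $D_n\le n^{2+o(1)}$, in each case feeding a bound on $\P(\Delta D_n>m\mid\F_n)$ into the general Lemmas~\ref{L:general_lower} and \ref{L:general_upper} with exponent $\beta=2$. Working directly with $D_n$ (rather than $D_n^\pm$) is cleanest here since the process is symmetric under $x\mapsto-x$. The only genuinely new input is the discrete potential theory of walks in the domain of attraction of a symmetric $\alpha$-stable law --- which is precisely what $\P(\xi>t)\asymp t^{-\alpha}$ with $1<\alpha<2$ provides --- and will be packaged in \lemref{alpha12_tools}. From it I will use: (a) $\P_x(T_A<T_x)\asymp\dist(x,A)^{1-\alpha}$ for $\dist(x,A)$ large; and (b) $\P_\infty(T_x<T_A)\asymp1$ whenever $x$ lies to one side of $A$ with $\dist(x,A)\gtrsim\diam A$. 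For (a), the upper bound is the escape-probability estimate coming from $g_{[-\ell,\ell]}(0,0)\asymp\ell^{\alpha-1}$, while the lower bound follows from $\P_x(T_A<T_x)\ge\P_x(T_{a^*}<T_x)=\tfrac1{2\mathfrak a(\dist(x,A))}\asymp\dist(x,A)^{1-\alpha}$, where $a^*\in A$ is nearest to $x$ and $\mathfrak a\asymp|\cdot|^{\alpha-1}$ is the potential kernel. For (b), I would use monotonicity in the set, $\P_\infty(T_x<T_A)\ge\P_\infty\big(T_x<T_{[\min A,\max A]}\big)=H_{[\min A,\max A]\cup\{x\}}(\infty,x)$, and bound the latter from below by identifying the harmonic measure from infinity with the equilibrium measure and exploiting the monotonicity and near-subadditivity of $\mathfrak a$.

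Granting (a), the upper bound is short, and --- in contrast with the regime $2<\alpha<3$ --- the crude uniform estimate already produces the sharp exponent. From $\P_\infty(T_x<T_A)\le1$ and \eqref{eq:murecurr}, $\mu(x,A)\lesssim p(x,A)\,\dist(x,A)^{\alpha-1}$. Summing over $x>\max A_n+m$ and writing $w_a=\max A_n-a$,
\[
\P\big(\Delta(\max A_n)>m\mid\F_n\big)\ \lesssim\ \sum_{a\in A_n}\sum_{d>m}d^{\alpha-1}\,\P\big(\xi=-(d+w_a)\big)\ \lesssim\ \sum_{a\in A_n}\frac1{m+w_a}\ \le\ \frac{|A_n|}{m}\ \asymp\ \frac nm ,
\]
the middle step being a summation by parts using the tail bound $\P(\xi<-t)\asymp t^{-\alpha}$ (splitting the inner sum at $d\asymp w_a$). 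The same holds for $\min A_n$, hence $\P(\Delta D_n>m\mid\F_n)\lesssim n/m$, and \lemref{general_upper} gives $D_n\le n^{2+o(1)}$ almost surely.

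For the lower bound I mirror the proof of \thmref{23_lower}. By (a) and (b), $\mu(x,a)\gtrsim p_{x,a}\,\dist(x,A)^{\alpha-1}$ for $x$ far to the right of $A$; summing over $x>\max A_n+m$ and $a\in A_n$, on the event $D_n<m/2$ (so $\dist(x,A_n)\ge m\gtrsim\diam A_n$, which is what (b) needs),
\[
\sum_{x>\max A_n+m}\mu(x,A_n)\ \gtrsim\ m^{\alpha-1}\,|A_n|\,\P(\xi>m+D_n)\ \gtrsim\ m^{\alpha-1}\,n\,\P(\xi>2m)\ \asymp\ \frac nm .
\]
Setting $\tilde E_n(m)=\{D_{n+1}\ge m\}\cup\{D_n\ge m/2\}$ makes this lower bound uniform in $\F_n$ --- the second clause absorbs the case in which $D_n$ is already comparable to $m$, where (b) would be unavailable --- and since $\tilde E_n(m)\subseteq\{D_{n+1}\ge m/2\}$ this yields $\P(D_{n+1}\ge m\mid\F_n)\gtrsim n/m$ uniformly, for all $m$ in the relevant range. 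Now \lemref{general_lower} applies to $M_n=D_n$ with $\beta=2$, giving $\limsup n^{-2}D_n=\infty$ and $D_n\ge n^2/\log\log n$ eventually; in particular $D_n\ge n^{2-o(1)}$, which together with the upper bound gives $D_n=n^{2+o(1)}$.

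The main obstacle is fact (b): a lower bound on $\P_\infty(T_x<T_A)$ uniform over $A$ and over $x$ tending to infinity on one side. A point sitting a bounded distance inside (rather than beyond) a long aggregate has harmonic measure from infinity of order $1/\diam A$, not order $1$, so the hypotheses on the position of $x$ in (b) are essential, and correctly tracking them --- the reduction to an interval and the equilibrium-measure computation for stable walks --- is where the real work sits. This is one of the potential-theoretic estimates the paper notes are missing from the literature; once (a) and (b) are available, the rest is bookkeeping with \eqref{eq:murecurr} and the two general lemmas.
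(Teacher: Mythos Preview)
Your proposal is correct and follows the paper's approach closely: both halves feed uniform bounds $\P(\Delta D_n>m\mid\F_n)=n m^{-1\pm o(1)}$ into Lemmas~\ref{L:general_lower} and~\ref{L:general_upper}, with the potential-theoretic inputs (a) and (b) supplied by \lemref{alpha12_tools} (via \lemref{stable}) exactly as you outline, and your upper bound is the same argument as \clmref{alpha12_delta_small} with the depths $w_a$ tracked a bit more explicitly before being discarded. The only real divergence is your sketched route to (b) via equilibrium measure and subadditivity of the potential kernel, whereas the paper extracts it from Spitzer's Green-function identity \eqref{eq:hashpitz}; note also that the slowly varying factors hidden in your $\asymp$'s do not cancel, so your displayed $\lesssim n/m$ is really $n m^{-1+o(1)}$, which is what the second clause of \lemref{general_upper} is designed to absorb.
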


Recall the definition of a slowly varying function

\begin{defn}
  A function $h$ is {\em slowly varying} at 0 (resp.\ at $\infty$) if for
  any $x>0$,
  \[
    \lim_{t\to0} \frac{h(tx)}{h(t)} = 1
  \]
  (resp.\ $\lim_{t\to \infty}$) and the limit is uniform on any compact set
  of $x$'s. For functions $f$, $g$ we write $f\asymp g$ if $f/g$ is slowly
  varying.
\end{defn}

Note that a common definition of slowly varying (see e.g.\ \cite{IL71})
requires only that the limit exists for all $x$. Since this is almost
impossible to use, one then applies Karamata's theorem \cite[Appendix
1]{IL71} to show that any locally integrable function which is slowly
varying in the weaker sense, is also slowly varying in the stronger
(uniform) sense stated above. Occasionally when we quote results from
\cite{IL71} we implicitly use Karamata's theorem to translate from the
weaker to the stronger sense of slowly varying.

A simple consequence of the definition of a slowly varying function at 0 is
that for any $\ep$ there are $K,\delta$ so that for $x<y<\ep$
\[
  \frac1K \left(\frac{x}{y}\right)^\delta < \frac{h(x)}{h(y)} <
  K\left(\frac{y}{x}\right)^\delta
\]
and $K \to 1$ and $\delta \to 0$ as $\eps \to 0$. If the function is slowly
varying at $\infty$, the same bounds hold for $y > x > \eps^{-1}$ instead.

Following \cite{S76,LL-book}, we define the harmonic potential by
\[
  a(n) = \sum_t \Big( \P_0(R_t=0) - \P_0(R_t=n) \Big).
\]
The harmonic potential is closely related to the Green function, and
the first stage is establishing its asymptotics. \cite[T28.1]{S76}
ensures us that the sum indeed converges.

\begin{lemma} \label{L:stable}
  Assume $\P(\xi>n) \asymp n^{-\alpha}$ for some $1<\alpha<2$. Then the
  harmonic potential satisfies
  \[
  a(n) \asymp n^{\alpha-1}.
  \]
\end{lemma}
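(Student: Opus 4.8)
The plan is to deduce the asymptotics of $a(n)$ from the local limit theorem for walks in the domain of attraction of an $\alpha$-stable law. Write $\phi(n)$ for the normalizing sequence, so that $R_n/\phi(n)$ converges to a symmetric $\alpha$-stable random variable; since $\P(\xi>t)\asymp t^{-\alpha}$, one has $\phi(n) = n^{1/\alpha} h_0(n)$ for a function $h_0$ slowly varying at $\infty$ (this is the standard characterization, which I would quote from \cite{IL71}). The local limit theorem then gives $\P_0(R_t=0)\sim C/\phi(t)$, and more precisely $\P_0(R_t=0)-\P_0(R_t=n)$ is, for $t$ large compared to the natural time scale of $n$, of order $\phi(t)^{-1}\psi(n/\phi(t))$ for a fixed bounded function $\psi$ vanishing at $0$; while for $t$ small compared to that scale the difference is essentially $\P_0(R_t=0)$ itself.

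The key steps, in order, are: (i) record the consequences of $\P(\xi>t)\asymp t^{-\alpha}$ for $\phi$, in particular that $\phi$ is regularly varying of index $1/\alpha$ and that its inverse $\phi^{-1}$ is regularly varying of index $\alpha$; (ii) invoke the local CLT to control $\P_0(R_t=x)$ uniformly, splitting the sum defining $a(n)$ at the scale $t\approx\phi^{-1}(n)$, i.e. the time at which a typical walk has spread out to distance $n$; (iii) for $t\gtrsim\phi^{-1}(n)$ bound the summand by $C\phi(t)^{-1}\min(1, (n/\phi(t))^{\text{something}})$ and sum, using Karamata's theorem to integrate the regularly varying tail — this contributes order $n\cdot\phi^{-1}(n)/n \asymp \phi^{-1}(n)$, wait, more carefully the sum of $\phi(t)^{-1}$ over $t$ up to $T$ is $\asymp T/\phi(T)$ by Karamata, and plugging $T=\phi^{-1}(n)$ gives $\asymp \phi^{-1}(n)/n$; and (iv) for $t\lesssim \phi^{-1}(n)$ the walk has not yet reached $n$, so $\P_0(R_t=n)$ is negligible and the summand is $\asymp \phi(t)^{-1}$, whose sum over $t\le\phi^{-1}(n)$ is again $\asymp \phi^{-1}(n)/\phi(\phi^{-1}(n)) = \phi^{-1}(n)/n$. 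Both regimes thus contribute $\asymp \phi^{-1}(n)/n$. Since $\phi^{-1}$ is regularly varying of index $\alpha$, $\phi^{-1}(n)/n$ is regularly varying of index $\alpha-1$, i.e. $\asymp n^{\alpha-1}$, which is the claim. One must also check the lower bound is of the same order, which follows because the contributions in both regimes are genuinely positive and not just upper-bounded — the local CLT gives matching lower bounds for $\P_0(R_t=0)$ and an upper bound for $\P_0(R_t=n)$ that is smaller by a constant factor in the relevant range.

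The main obstacle I anticipate is making step (ii)–(iii) rigorous with only the hypothesis $\P(\xi>t)\asymp t^{-\alpha}$ rather than an exact tail: one needs a local limit theorem with enough uniformity to control $\P_0(R_t=0)-\P_0(R_t=n)$ simultaneously over all $n$ and the relevant range of $t$, and then one needs Karamata-type summation of slowly varying functions to conclude without the slowly varying factors accumulating. The paper flags exactly this point (``we expected to find many of these results in standard references, and did not''), so I would expect the actual proof to either cite a suitable version of the local CLT for stable domains of attraction (e.g. from \cite{IL71} or \cite{S76}) and then do the Karamata bookkeeping, or to go through the Green-function/characteristic-function route: write $a(n)=\frac1{2\pi}\int_{-\pi}^{\pi}\frac{1-\cos n\theta}{1-\hat\xi(\theta)}\,d\theta$ and use that $1-\hat\xi(\theta)\asymp |\theta|^\alpha$ near $0$ (again a consequence of the tail hypothesis, via a Tauberian argument), from which $a(n)\asymp n^{\alpha-1}$ follows by scaling $\theta = u/n$ in the integral. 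The characteristic-function route is probably cleaner and is likely the one the authors take, so if I were writing it I would lead with that: establish $1-\hat\xi(\theta)\asymp|\theta|^\alpha$, then evaluate the Fourier integral by the substitution $\theta\mapsto\theta/n$ and dominated convergence, handling the slowly varying correction by the uniform two-sided bounds on slowly varying functions quoted just before the lemma.
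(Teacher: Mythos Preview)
Your proposal is correct, and the characteristic-function route you settle on at the end is exactly the one the paper takes. The paper writes $a(n)=\int_{-\pi}^{\pi}\frac{1-e^{in\zeta}}{1-\phi(\zeta)}\,d\zeta$, invokes \cite[Theorem~2.6.5]{IL71} to get $\log\phi(\zeta)\asymp -|\zeta|^\alpha$ (hence $1-\phi(\zeta)=|\zeta|^\alpha h(\zeta)^{-1}$ with $h$ slowly varying), substitutes $\zeta=x/n$, and handles the slowly varying correction by splitting the range of integration into $[0,\ep]$, $[\ep,1/\ep]$, $[1/\ep,\ep n]$, $[\ep,\pi]$ and using precisely the two-sided power bounds on $h$ that you mention; the limiting constant $K_\alpha=\int_0^\infty (1-e^{ix})x^{-\alpha}\,dx$ is computed explicitly to check it is nonzero.

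Your first approach via the local limit theorem is genuinely different and would also work, but as you note it requires a uniform local CLT in the domain of attraction plus Karamata bookkeeping for both regimes of $t$; the Fourier route sidesteps this by packaging the whole sum over $t$ into the single factor $(1-\phi(\zeta))^{-1}$, so the only slowly-varying analysis needed is on that one function. The paper also remarks that symmetry of $\xi$ is used here to kill the skewness and drift parameters $\beta,\gamma$ in the characteristic function; your sketch implicitly assumes this when writing $1-\hat\xi(\theta)\asymp|\theta|^\alpha$ as a real asymptotic.
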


\begin{proof}
  Given the tail of the step distribution we know from
  \cite[Theorem~2.6.1]{IL71} that $\xi$ belongs to the domain of attraction
  of a (symmetric) stable random variable with exponent $\alpha$. Denote by
  $\phi(\zeta)$ the Fourier transform of $\xi$. By
  \cite[Theorem~2.6.5]{IL71}, we
  have as $\zeta \to 0$ for some real $\beta,\gamma$
  \[
  \log\phi(\zeta)-i\gamma\zeta  \asymp  -|\zeta|^{\alpha}
  \left( 1 - i\beta\frac{\zeta}{|\zeta|}\tan(\tfrac\pi2\alpha) \right)
  \]
  ($\beta$ is the skewness of the stable limit, $\gamma$ corresponds to
  drift). In our case, $\xi$ is symmetric so $\phi$ is real valued and so
  $\beta=\gamma=0$ and
  \[
    \log \phi (\zeta) \asymp -|\zeta|^\alpha.
  \]
  This is the most essential use we make of the symmetry of $\xi$. In
  effect if one only assumes that the drift $\gamma$ is zero then the proof
  of the lemma follows through. However, for $\gamma\neq 0$ the
  conclusion of the lemma does not hold.

  Now, write
  \[
  \sum_{t\le T} \Big( \P_0(R_t=0) - \P_0(R_t=n) \Big) =
  \int_{-\pi}^{\pi} \big(1-e^{in\zeta}\big) \sum_{t\le T} \phi^{t}(\zeta)
  d\zeta.
  \]
  Aperiodicity gives that $\phi=1$ only at $\zeta=0$ and therefore (since
  $\alpha<2$), $\frac{1-e^{in\zeta}}{1-\phi(\zeta)}$ is integrable. Hence by
  dominated convergence,
  \[
    a(n) = \int_{-\pi}^{\pi} \frac{1-e^{in\zeta}}{1-\phi(\zeta)} d\zeta
    = 2{\rm Re} \int_0^\pi \frac{1-e^{in\zeta}}{|\zeta|^\alpha} h(\zeta)
    d\zeta
  \]
  where $h$ is slowly varying at $0$. Our plan is to use the fact that $h$
  is slowly varying and that bulk of the contribution to the last integral
  comes from $\zeta\in[\ep/n,1/(\ep n)]$ to compare this integral to
  $K_\alpha n^{\alpha-1} h(n^{-1})$. We begin with the constant and work
  backwards towards $a(n)$.

  We begin with
  \[
  \int_\ep^{1/\ep} \frac{1-e^{i x}}{x^\alpha} d x = K_\alpha + \eta_1(\ep),
  \]
  where $K_\alpha$ is the integral from $0$ to $\infty$ and where
  $\eta_1(\ep) \xrightarrow[\ep\to0]{}0$ (since $1<\alpha<2$). This
  integral may be calculated explicitly. For example, one may change the
  path of integration to the imaginary line (so that $e^{ix}$ is
  transformed into $e^{-x}$) and integrate by parts to get the integral
  defining the Gamma function. The result is that $K_\alpha =
  \Gamma(1-\alpha) e^{i\pi(1-\alpha)/2}$ and in particular is nonzero.

  Since $h$ is slowly varying, using the compactness of the interval
  $[\ep,1/\ep]$:
  \[
    \int_\ep^{1/\ep} \frac{1-e^{ix}}{x^\alpha}
                     \frac{h(xn^{-1})}{h(n^{-1})} dx
    = K_\alpha + \eta_1(\ep) + \eta_2(\ep,n),
  \]
  where for any fixed $\ep$ we have $\eta_2(\ep,n)
  \xrightarrow[n\to\infty]{}0$.

  On the interval $[0,\ep]$ we have
  \[
    \left|\frac{h(xn^{-1})}{h(n^{-1})}\right| < C x^{-\delta}
  \]
  where $\delta$ can be made arbitrarily small as $\ep\to0$ uniformly
  in $n$. Since
  $\alpha<2$ we have
  \[
    \left| \int_0^\ep \frac{1-e^{ix}}{x^\alpha}
                      \frac{h(xn^{-1})}{h(n^{-1})} dx \right|
    \le \int_0^\ep C x^{1-\alpha-\delta} \le C' \ep^{2-\alpha-\delta}.
  \]
  Similarly, on the interval $[1/\ep,\ep n]$ we have
  \[
    \left|\frac{h(xn^{-1})}{h(n^{-1})}\right| < C x^\delta
  \]
  where $\delta$ can be made small provided $n^{-1}$ and $x n^{-1} \leq
  \eps$ are both small. Since $\alpha<2$ we have
  \[
  \left| \int_{1/\ep}^{\ep n} \frac{1-e^{ix}}{x^\alpha}
                              \frac{h(xn^{-1})}{h(n^{-1})} dx \right|
  \le \int_{1/\ep}^\infty 2C x^{-\alpha+\delta} 
  \le C' \ep^{\alpha-1-\delta}.
  \]
  Since $1<\alpha<2$, both these bounds vanish as $\ep\to0$, and so we get
  \[
    \int_0^{\ep n} \frac{1-e^{ix}}{x^\alpha}
                   \frac{h(xn^{-1})}{h(n^{-1})} dx
    = K_\alpha + \eta_1(\ep) + \eta_2(\ep,n) + \eta_3(\ep,n),
  \]
  where $\limsup_{n\to\infty} |\eta_3(\ep,n)| \xrightarrow[\eps\to0]{} 0$.
  (Since for the $\limsup$ it suffices to consider $n>1/\eps$.)

  Now we are ready to consider $s(n)$. By a change of variable
  \[
    \int_0^\ep \frac{1-e^{in\zeta}}{|\zeta|^\alpha} h(\zeta) d\zeta
    = n^{\alpha-1} \int_0^{\ep n} \frac{1-e^{ix}}{x^\alpha} h(xn^{-1}) d\zeta
  \]
  Finally,
  \[
  \left|\int_\ep^\pi \frac{1-e^{in\zeta}}{|\zeta|^\alpha} h(\zeta) d\zeta
  \right|
  \le \int_\ep^\pi \frac{2}{|\zeta|^\alpha} h(\zeta) d\zeta,
  \]
  which is finite and independent of $n$.

  Combining these identities we get
  \[
  a(n) = n^{\alpha-1} h(n^{-1})
           \left[K_\alpha + \eta_1 + \eta_2 + \eta_3 \right] + \eta_4(\ep),
  \]
  with $\eta_4(\eps)$ bounded. Using the estimates on the $\eta_i$'s we
  find
  \[
  \lim_{\eps\to0} \limsup_{n\to\infty} \left| \frac{a(n)}{n^{\alpha-1}
      h(n^{-1})} - K_\alpha \right|
  = \lim_{\eps\to0} \limsup_{n\to\infty} \left| \eta_1 + \eta_2 + \eta_3 +
    \frac{\eta_4}{n^{\alpha-1} h(n^{-1})} \right|
  = 0.
  \]
  Since $a(n)$ does not depend on $\eps$, this in fact means that
  \[
  \limsup_{n\to\infty}\left| \frac{a(n)}{n^{\alpha-1}
      h(n^{-1})} - K_\alpha \right|=0
  \]
  and since $K_\alpha \neq 0$, this means that $a(n)\asymp n^{\alpha-1}$.
\end{proof}

Asymptotics of the potential kernel allow us to derive the following two
estimates for the hitting probabilities.

\begin{lemma}\label{L:alpha12_tools}
  Assume $\P(\xi>n) \asymp n^{-\alpha}$ for some $1<\alpha<2$, and let $I$
  be the interval $[-n,0]$, and $k\in[n,2n]$. Then
  \begin{gather}
    \P_k(T_I < T_k) \approx n^{1-\alpha} h(n)  \label{eq:Tkk T0n}\\
    \P_\infty(T_k < T_I) > c,                \label{eq:Tkinf T0n}
  \end{gather}
  for some $c>0$ and slowly varying function $h$.
\end{lemma}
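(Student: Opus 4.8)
The plan is to derive both estimates from the asymptotics $a(n)\asymp n^{\alpha-1}$ established in \lemref{stable}, using the standard relations between the potential kernel and hitting probabilities for recurrent walks. First I would recall the identity, valid for any recurrent aperiodic walk and any finite set $A$ (see \cite[T14.2, P11.7]{S76}), expressing hitting probabilities of $A$ in terms of $a(\cdot)$. Specializing to $A=I\cup\{k\}$ one gets that $\P_k(T_I<T_k)$ is comparable to $1/\left(\sum_{x} a(x)\,(\text{something})\right)$, but the cleanest route is via the escape probability: for a single point, $\P_k(T_k^+=\infty)=0$, so instead one writes $\P_k(T_I<T_k)$ as the reciprocal of the expected number of visits to $k$ before $T_I$, i.e.\ $\P_k(T_I<T_k) = 1/g_I(k,k)$, where $g_I$ is the Green function for the walk killed on $I$. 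So the first task is to estimate $g_I(k,k)$ for $k\in[n,2n]$ and $I=[-n,0]$.

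For that estimate I would use the representation of $g_I(k,k)$ in terms of the potential kernel. For a finite set $A$ and $x\notin A$, one has the formula
\[
  g_A(x,x) = \sum_{a\in A} H_A(x,a)\, a(x-a) - a(0) \cdot (\text{correction}),
\]
or more precisely the identity $g_A(x,y) = \sum_{a\in A}H_A(x,a)a(y-a) + a(x-y)\cdot[\ldots]$ from \cite[P14.2, T14.2]{S76} (one has to be a little careful about which normalization of $a$ is used, since some references use $a(0)=0$ and some $a(0)=1$; I would fix conventions at the outset). The point is that all the terms $a(k-a)$ for $a\in I=[-n,0]$ satisfy $k-a\in[n,3n]$, so by \lemref{stable} each is $\asymp n^{\alpha-1}$, and since $\sum_a H_I(k,a)=1$ we get $g_I(k,k)\asymp n^{\alpha-1} h(n)$ for the appropriate slowly varying $h$. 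This gives \eqref{eq:Tkk T0n} after taking reciprocals. One subtlety: I must also check the lower bound, i.e.\ that $g_I(k,k)$ is not much smaller than $n^{\alpha-1}h(n)$; this follows because $a(k-a)\ge c n^{\alpha-1}h(n)$ uniformly, so the convex combination is bounded below as well.

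For \eqref{eq:Tkinf T0n}, $\P_\infty(T_k<T_I)>c$, I would argue as follows. By definition $\P_\infty(T_k<T_I) = \P_\infty\big(R_{T_{I\cup\{k\}}}=k\big) = H_{I\cup\{k\}}(\infty,k)$, the harmonic measure from infinity of the singleton $\{k\}$ relative to $I\cup\{k\}$. Using the hitting-measure formula in terms of $a$ (again \cite[T14.2]{S76}, or the last-exit decomposition), $H_{I\cup\{k\}}(\infty,k)$ can be written as $\capa(\{k\})$-type quantity divided by $\capa(I\cup\{k\})$-type quantity; equivalently one compares the ``equilibrium charge'' at $k$ to the total. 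Since $k$ is at distance $\approx n$ from $I$, and $I$ has diameter $n$, one expects the contribution of the point $k$ to the capacity of $I\cup\{k\}$ to be a fixed fraction — heuristically, from infinity the walk sees $\{k\}$ and $I$ at comparable ``sizes'' on the relevant scale. I would make this rigorous by the same Green-function identities: $H_{I\cup\{k\}}(\infty,k)$ equals (a limit of) $g_{I\cup\{k\}}(y,k)/g_{I\cup\{k\}}(k,k)$ appropriately normalized, and one estimates numerator and denominator using \lemref{stable} as before; alternatively, a softer monotonicity/comparison argument as in \clmref{tight} and \lemref{hit_x}\eqref{enu:limy>c}, reducing to $I$ replaced by a half-line and using that the walk started far out hits $k$ before a distant half-line with probability bounded away from zero (which itself follows from recurrence plus the potential-kernel asymptotics).

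The main obstacle I anticipate is bookkeeping with the potential-kernel identities — getting the exact form of $g_A(x,y)$ and $H_A(x,a)$ in terms of $a(\cdot)$ right, with the correct normalization and the correct finite-$A$ correction terms, and then controlling the slowly varying factor $h$ uniformly over $k\in[n,2n]$ and over $a\in I$. None of the individual steps is deep given \lemref{stable}, but the estimate \eqref{eq:Tkinf T0n} in particular requires care: a naive bound would only give $\P_\infty(T_k<T_I)\gtrsim$ (something going to $0$), and one genuinely needs that the capacity of the single point $k$ is a \emph{fixed fraction} of the capacity of $I\cup\{k\}$, which relies on $k$ being at distance comparable to $\diam I$ and on the stable-walk scaling. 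Establishing that cleanly — probably via the explicit comparison of the two capacities using the $a(n)\asymp n^{\alpha-1}$ asymptotics — is where the real work lies.
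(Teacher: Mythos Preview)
Your overall plan --- reduce to estimating $g_I(k,k)$ via Spitzer's potential-kernel identities and then invoke \lemref{stable} --- is exactly the paper's approach, and the reduction $\P_k(T_I<T_k)=1/g_I(k,k)$ is correct. However, the formula you write for $g_I(k,k)$ is not the right one for one-dimensional recurrent walks. The identity actually needed (Spitzer T30.2, parts (c) and (d) combined) reads
\[
g_I(x,y) = -a(y-x) + \kappa_I + \sum_{i\in I} H_I(\infty,i)\,a(x-i) + \sum_{i\in I} H_I(x,i)\,a(y-i),
\]
so that $g_I(k,k) = g_I(k,\infty) + \sum_i H_I(k,i)a(k-i)$, where $g_I(k,\infty)=\kappa_I+\sum_i H_I(\infty,i)a(k-i)$. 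The convex combination $\sum_i H_I(k,i)a(k-i)$ is indeed $\approx n^{\alpha-1}h(n)$ as you say, but the extra term $g_I(k,\infty)$ is \emph{not} zero here (unlike, say, for SRW on $\Z^2$), and $\kappa_I$ depends on $n$ in an a~priori uncontrolled way. The upper bound on $g_I(k,k)$ is salvaged by the observation that $g_I(k,\infty)/g_I(k,k)$ equals the harmonic measure from infinity of $k$ in $I\cup\{k\}$, hence is at most $\tfrac12$ by symmetry, giving $g_I(k,k)\le 2\sum_i H_I(k,i)a(k-i)$. You do not mention this step, and your ``convex combination'' argument by itself does not give it.

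The genuine gap is \eqref{eq:Tkinf T0n}, which you rightly flag as the hard part. The quantity $\P_\infty(T_k<T_I)$ is precisely the ratio $g_I(k,\infty)/g_I(k,k)$ --- note: killed on $I$, not on $I\cup\{k\}$; your expression $g_{I\cup\{k\}}(y,k)/g_{I\cup\{k\}}(k,k)$ is identically zero for $y\neq k$. So what is needed is a \emph{lower} bound $g_I(k,\infty)\gtrsim n^{\alpha-1}h(n)$. The capacity-comparison and half-line-monotonicity ideas you sketch do not obviously produce this (the half-line argument from \lemref{hit_x} relies on the finite-variance ladder structure, which is unavailable here). The device that works is to eliminate the unknown $\kappa_I$ by using that $g_I(1,\infty)\ge 0$ (it is a limit of nonnegative Green-function values), whence
\[
g_I(k,\infty) \ge g_I(k,\infty)-g_I(1,\infty) = \sum_{i\in I} H_I(\infty,i)\big(a(k-i)-a(1-i)\big),
\]
and each difference $a(k-i)-a(1-i)$ with $i\in[-n,0]$, $k\in[n,2n]$ is $\gtrsim n^{\alpha-1}h(n)$ by \lemref{stable}. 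This comparison with $g_I(1,\infty)$ is the missing idea.
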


\begin{proof}
  Let $g(x,y)=g_I(x,y)$ be the Green function with respect to $I$, namely
  \[
  g(x,y) = \sum_{t=0}^{\infty} \P_x(R(t)=y ; T_I>t),
  \]
  and let $H(x,\cdot)=H_I(x,\cdot)$ be the hitting measures on $I$, namely
  \[
  H(x,i) = \P_x(R(T_I)=i).
  \]
  Finally, let $H(\infty,i)=\lim H(x,i)$ be the harmonic measure on $I$.

  We use \lemref{stable} and get that
  \begin{equation}
    a(n) = n^{\alpha-1}h(n),     \label{eq:defhn}
  \end{equation}
  where $h$ is a slowly varying function. In particular $a(n)$ is
  unbounded so $R$ is recurrent (this can also be inferred directly
  from \cite[P2.8]{S76}). Hence Theorem~T30.2 of \cite{S76} applies
  to $R$ (the condition that the walk is not ``left- or
  right-continuous'', to use Spitzer's terminology, is satisfied
  because $R$ is symmetric). Combining (c) and (d) of that theorem we
  get for every $x,y\in\Z\setminus I$
  \begin{equation}\label{eq:hashpitz}
    g(x,y) = -a(y-x) + \kappa + \sum_{i\in I} H(\infty,i) a(x-i)
             + \sum_{i\in I} H(x,i) a(y-i),
  \end{equation}
  where $\kappa=\kappa_I$ is some number. As a first step to understanding
  \eqref{eq:hashpitz}, let $y\to\infty$. Since $\xi$ has infinite
  second moment we may apply \cite[T29.1(1)]{S76} which states that
  \[
  \lim_{|y|\to\infty} a(y-x) - a(y) = 0    \qquad\forall x
  \]
  and hence
  \[
  \lim_{|y|\to\infty} -a(y-x) + \sum_{i\in I} H(x,i)a(y-i) = 0
                                                      \qquad \forall x.
  \]
  or
  \begin{equation}\label{eq:gxinf}
  g(x,\infty) := \lim_{|y|\to\infty} g(x,y)
               = \kappa + \sum_{i\in I} H(\infty,i) a(x-i).
  \end{equation}
  Setting $x=1$ we get
  \[
  \kappa + \sum_{i\in I} H(\infty,i) a(1-i) \geq 0
  \]
  (this is not obvious because $\kappa$ is a negative constant which is
  difficult to estimate directly from its definition). Consequently, for
  $x=k\in[n,2n]$ we get
  \begin{align}
    g(k,\infty) & \geq\sum_{i=-n}^{0}(a(k-i)-a(1-i))H(\infty,i) \nonumber \\
    & \geq\min_{i=-n,\dotsc,0}a(k-i)-a(1-i) \nonumber \\
    & \geq n^{\alpha-1}h(n)(2^{\alpha-1}-1)(1+o(1)). \label{eq:gkinf}
  \end{align}
  The last inequality requires clarification. Roughly, the minimum in
  the LHS is achieved
  when $i=-n$ and $k=n$. Other $i\in I$ and other $k\in[n,2n]$ give larger
  values. This involves some simple playing around with the definition of a
  slowly varying function, in the spirit of the previous lemma which we
  shall omit.

  On the other hand, it is easy to see that $g(k,\infty)/g(k,k)$ is the
  harmonic measure of $k$ in the set $I\cup\{k\}$. Because the walk is
  symmetric and this set has more than 1 point the harmonic measure of
  any point is at most 1/2, and hence $g(k,\infty) \leq \frac{1}{2}g(k,k)$.
  With \eqref{eq:gkinf} this implies
  \[
  g(k,k) \geq cn^{\alpha-1}h(n).
  \]
  However, we can also write
  \[
  g(k,k) \stackrel{\textrm{(\ref{eq:hashpitz},\ref{eq:gxinf})}}{=}
  g(k,\infty) + \sum_{i\in I} H(k,i)a(k-i)
         \le \tfrac12 g(k,k) + \sum_{i\in I} H(k,i)a(k-i)
  \]
  or
  \begin{align}
    g(k,k) &\leq 2\sum_{i\in I} H(k,i)a(k-i)
            \leq 2 \max_{i\in I} a(k-i)      \nonumber \\
           &\leq 2(3n)^{\alpha-1}h(n)(1+o(1)).  \label{eq:gkk} \\
           &\leq Cn^{\alpha-1}h(n). \nonumber
  \end{align}
  Since we have both upper and lower bounds we find, $g(k,k) \approx
  n^{\alpha-1}h(n)$. This implies our first goal \eqref{eq:Tkk T0n} since
  $\P_k(T_k<T_I) = g(k,k)^{-1}$.

  Similarly we get \eqref{eq:Tkinf T0n} from
  \[
  \P_\infty(T_k<T_I) = \frac{g(k,\infty)}{g(k,k)}
  \geq \frac{2^{\alpha-1}-1}{2\cdot3^{\alpha-1}}(1+o(1))\,.   \qedhere
  \]
\end{proof}

While \lemref{alpha12_tools} talks about hitting an interval, by
translation invariance and by monotonicity of $T_A$ in $A$, it implies
similar bounds for any set $A$ and sufficiently far point $x$.

\begin{coro}\label{C:a12_hit}
  Assume $\P(\xi>n) \asymp n^{-\alpha}$ for some $1<\alpha<2$, and let
  $x,A$ satisfy $x \ge \max A + \diam A$. Then
  \begin{gather}
    \P_x(T_A < T_x) \le \dist(x,A)^{1-\alpha} h(\dist(x,A))  
                                                     \label{eq:Tkk T0n2}\\
    \P_\infty(T_x < T_A) > c,                        \label{eq:Tkinf T0n2}
  \end{gather}
  for some $c>0$ and slowly varying function $h$.
\end{coro}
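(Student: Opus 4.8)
The plan is to obtain both estimates from \lemref{alpha12_tools} by combining translation invariance of the walk with the monotonicity of the hitting time $T_A$ in the set $A$: if $A\subseteq B$ then $T_B\le T_A$ pointwise, since hitting a larger set can only happen sooner. First I would set $d=\dist(x,A)$, which we may assume is at least $1$. The hypothesis $x\ge\max A+\diam A$ gives in particular $x\ge\max A$, so $d=x-\max A$, and it also gives $\diam A\le d$, whence $\min A=\max A-\diam A\ge x-2d$. Hence $A$ is contained in the interval $B:=[x-2d,x-d]$, which has length $d$ and right endpoint $\max A$.

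Next I would translate by the shift $\tau\colon y\mapsto y-(x-d)$, which carries $B$ onto $I:=[-d,0]$ and $x$ onto $d$, and apply \lemref{alpha12_tools} with $n=d$ and $k=d$ (so that indeed $k\in[n,2n]$). By translation invariance, $\P_y(T_B<T_x)=\P_{\tau(y)}(T_I<T_d)$ and $\P_y(T_x<T_B)=\P_{\tau(y)}(T_d<T_I)$ for every starting point $y$. For the upper bound, monotonicity then gives
\[
\P_x(T_A<T_x)\ \le\ \P_x(T_B<T_x)\ =\ \P_d(T_I<T_d)\ \le\ C\,d^{1-\alpha}h(d),
\]
the last step being \eqref{eq:Tkk T0n}, with $C$ an absolute constant and $h$ the slowly varying function of \lemref{alpha12_tools}; since a constant multiple of a slowly varying function is slowly varying, absorbing $C$ into $h$ gives \eqref{eq:Tkk T0n2}. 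For the lower bound, $A\subseteq B$ yields $T_B\le T_A$, hence $\{T_x<T_B\}\subseteq\{T_x<T_A\}$, so $\P_y(T_x<T_A)\ge\P_y(T_x<T_B)$ for every finite $y$; letting $y\to\pm\infty$ --- the limits exist by \cite[T30.1]{S76}, exactly as in the proof of \lemref{murecurr}, and since $1<\alpha<2$ the walk has infinite variance so $\P_{+\infty}=\P_{-\infty}=\P_\infty$ --- and invoking \eqref{eq:Tkinf T0n} through the translation above,
\[
\P_\infty(T_x<T_A)\ \ge\ \P_\infty(T_x<T_B)\ =\ \P_\infty(T_d<T_I)\ >\ c,
\]
which is \eqref{eq:Tkinf T0n2}.

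I do not expect any genuine obstacle: the corollary is essentially a bookkeeping consequence of \lemref{alpha12_tools}. The only point that requires a moment's care is folding the absolute constants and the slowly varying function of the lemma into the single slowly varying $h$ that appears in the statement, and this is immediate from the definition.
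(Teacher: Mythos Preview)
Your proposal is correct and follows exactly the approach the paper indicates: the paper itself gives no formal proof of this corollary, only the one-line remark that \lemref{alpha12_tools} ``by translation invariance and by monotonicity of $T_A$ in $A$'' yields the stated bounds for general $A$ and far $x$. Your argument is precisely the natural fleshing-out of that remark --- the containment $A\subseteq[x-2d,x-d]$, the translation to $I=[-d,0]$ with $k=d\in[n,2n]$, and the two monotonicity directions for $T_A$ versus $T_B$ --- and there are no gaps.
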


\subsection{Proof of the lower bound}

We begin by proving a uniform lower bound on the probability of making a
large jump. Its use is analogous to the role \lemref{Z2_delta_large} plays
in the restricted $\Z^2$ case.

\begin{lemma}
  Under the assumptions of \thmref{12_both}, uniformly in $m\ge n$,
  \[
    \P(D_{n+1} \ge m \;|\; \F_n) \ge \frac{n}{m^{1+o(1)}}.
  \]
\end{lemma}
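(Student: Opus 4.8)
The plan is to follow the template of \lemref{Z2_delta_large} and of the lower bound in \thmref{23_lower}: force a single large jump to the right. Since $D_n$ is $\F_n$-measurable and $m\ge n$, on the event $\{D_n\ge m\}$ the asserted inequality is trivial (the conditional probability equals $1$), so it suffices to work on a fixed configuration $A=A_n$ with $\diam A<m$. For any $x>\max A+m$, gluing $x$ to $A$ makes $D_{n+1}\ge D_n+m\ge m$; moreover $x>\max A+m>\max A+\diam A$, so \corref{a12_hit} applies and gives $\P_\infty(T_x<T_A)>c$ together with the \emph{upper} bound $\P_x(T_A<T_x)\le\dist(x,A)^{1-\alpha}h(\dist(x,A))$ from \eqref{eq:Tkk T0n2}. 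Substituting both into the gluing formula \eqref{eq:murecurr} gives, for every $a\in A$ and every such $x$,
\[
\mu(x,a;A)=\frac{p_{x,a}\,\P_\infty(T_x<T_A)}{\P_x(T_A<T_x)}\gtrsim p_{x,a}\,\frac{\dist(x,A)^{\alpha-1}}{h(\dist(x,A))}.
\]

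Next I would exploit that $\alpha-1>0$ while $h$ is slowly varying: by the elementary bound on slowly varying functions recalled above (valid for arguments $\ge\eps^{-1}$ with $\delta$ as small as desired, so fix $\delta<\alpha-1$), one has $\dist(x,A)^{\alpha-1}/h(\dist(x,A))\gtrsim m^{\alpha-1}/h(m)$ for all $\dist(x,A)\ge m$ and $m$ large. Summing over $x>\max A+m$ and writing $w_a=\max A-a\ge 0$, symmetry of $\xi$ gives
\[
\sum_{x>\max A+m}\mu(x,a;A)\gtrsim\frac{m^{\alpha-1}}{h(m)}\sum_{x>\max A+m}p_{x,a}=\frac{m^{\alpha-1}}{h(m)}\,\P(\xi>m+w_a).
\]
This is the one place the reduction to $\diam A<m$ is used: it forces $w_a<m$ for \emph{every} $a\in A$, hence $\P(\xi>m+w_a)\ge\P(\xi>2m)$. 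Summing over the $\gtrsim n$ points of $A$, and using $\P(\xi>t)\asymp t^{-\alpha}$ together with $h(m)=m^{o(1)}$,
\[
\P(\Delta D_n>m\mid\F_n)\ge\sum_{a\in A}\sum_{x>\max A+m}\mu(x,a;A)\gtrsim n\cdot\frac{m^{\alpha-1}}{h(m)}\cdot\P(\xi>2m)=n\,m^{-1+o(1)}=\frac{n}{m^{1+o(1)}},
\]
where the $o(1)$ in the exponent is a deterministic function of $m$ tending to $0$ that absorbs the implicit constants, the potential-kernel factor $h$, the slowly varying part of the tail of $\xi$, and the Potter constant; in particular the bound is uniform in $\F_n$ and in $m\ge n$. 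Since $\{\Delta D_n>m\}\subseteq\{D_{n+1}\ge m\}$, this is the claim.

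The argument is short once \corref{a12_hit} is available, so I do not expect a single serious obstacle. The two points requiring care are: (i) noticing that the $\F_n$-measurable event $\{D_n\ge m\}$ can be discarded for free — this is the cheap substitute for the positive-density input $|A_n^+|\gtrsim n$ used in the finite-variance regime, and it is exactly what lets one control $w_a$ for all $a\in A$ simultaneously; and (ii) the bookkeeping of the two independent sources of slow variation (the factor $h$ from \corref{a12_hit} and the slowly varying factor in the tail of $\xi$), making sure they collapse into a single deterministic $m^{o(1)}$ that is uniform over configurations. A final sanity check worth stating is that \corref{a12_hit} is invoked in the correct direction: to make $\mu$ large we need an \emph{upper} bound on $\P_x(T_A<T_x)$ and a \emph{lower} bound on $\P_\infty(T_x<T_A)$, which are precisely \eqref{eq:Tkk T0n2} and \eqref{eq:Tkinf T0n2}.
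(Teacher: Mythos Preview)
Your proof is correct and follows essentially the same route as the paper: reduce to $\diam A<m$, invoke \corref{a12_hit} to bound $\mu(x,a)$ from below via the gluing formula, sum over $a\in A$ and $x>\max A+m$, and absorb the two slowly varying factors into a single $m^{o(1)}$. The only cosmetic difference is that the paper sums over $x\ge a+m+D_n$ (yielding $\P(\xi\ge m+D_n)$) while you sum over $x>\max A+m$ (yielding $\P(\xi>m+w_a)$); both reduce to $\P(\xi>2m)$ under $D_n<m$.
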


\begin{proof}
  On the event $D_n \ge m$ the claim is trivial, so assume $D_n < m$.
  Consider some $x$ such that $\dist(x,A)>m$. By \corref{a12_hit}, for some
  slowly varying function $h$
  \[
  \P_x(T_A < T_x) \leq \dist(x,A)^{1-\alpha} h(\dist(x,A))
                  \leq C\dist(x,A)^{1-\alpha+\eps}
                  \leq C m^{1-\alpha+\eps}.
  \]
  (Since any slowly varying positive function $h$ has $h(k) \lesssim
  k^{\eps}$ for any $\eps>0$ --- the constant $C$ and all constants
  below may depend on $\eps$).

  By the second part of \corref{a12_hit}, if $m \geq D_n$ then
  $\P_\infty(T_x<T_A)>c$. Using these bounds in the gluing formula
  (\ref{eq:murecurr}), we find that for any $x$ with $\dist(x,A) \ge m$
  \[
  \mu(x,a) = \frac{p_{x,a} \P_\infty(T_x<T_A)}{\P_x(T_A < T_x)}
  \ge c p_{x,a} m^{\alpha-1-\ep}.
  \]
  Summing over all $a\in A$ and $x$ with $\dist(x,A)\geq m$ we get
  \begin{align*}
    \P(\Delta D_n \ge m\;|\;\F_n)
    &\ge \sum_a \sum_{x\ge a+m+D_n} \mu(x,a) \\
    &\ge c m^{\alpha-1-\eps} \sum_a \sum_{x\ge a+m+D_n} p_{x,a} \\
    &= c n m^{\alpha-1-\eps} \P(\xi \ge m+D_n).
  \end{align*}
  It follows that on the event $D_n \leq m$ we have
  \begin{align}
    \P(\Delta D_n \ge m|\F_n)
    &\geq c n m^{\alpha-1-\eps} \P(\xi\ge 2m)\nonumber\\
    &\geq c n m^{\alpha-1-\eps} m^{-\alpha-\eps} = c n m^{-1-2\eps}
                                               \label{eq:23_delta_large}
  \end{align}
  Since $\eps$ was arbitrary, the proof of the lemma is complete.
\end{proof}

\begin{proof}[Proof of \thmref{12_both} (lower bound)]
  This follows from the last lemma and \lemref{general_lower}.
\end{proof}

\subsection{Proof of the upper bound}

Once again, we first prove a uniform bound on the probability of making a
large jump. The theorem then follows from the bound the same way the upper
bound for the $\Z^2$ case (\thmref{Z2_upper}) follows from
\lemref{Z2_delta_small}.

\begin{claim}\label{C:alpha12_delta_small}
  Under the conditions of \thmref{12_both},
  \[
    \P(\Delta D_n>m |\F_n) \le \frac{n}{m^{1-o(1)}}.
  \]
\end{claim}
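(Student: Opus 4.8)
The plan is to mimic the structure of the lower bound proof, but now using the \emph{upper} bounds from \corref{a12_hit} in the gluing formula. First I would fix $x$ with $\dist(x,A)\ge m\ge n > D_n$ (the case $D_n\ge m$ being trivial) and apply \eqref{eq:Tkinf T0n2} and the trivial bound $\P_\infty(T_x<T_A)\le 1$ to the numerator, and \eqref{eq:Tkk T0n2} to the denominator — except that here the inequality goes the wrong way for a clean bound, so instead I need a \emph{lower} bound on $\P_x(T_A<T_x)$. This is the subtle point: \corref{a12_hit} only gives an upper bound on $\P_x(T_A<T_x)$. However, monotonicity in $A$ gives $\P_x(T_A<T_x)\ge \P_x(T_{a_0}<T_x)$ for the single nearest point $a_0\in A$, and \lemref{alpha12_tools} (specifically the two-sided estimate $\P_k(T_I<T_k)\approx n^{1-\alpha}h(n)$ which in the proof came from $g(k,k)\approx n^{\alpha-1}h(n)$, valid also when $I$ is a single point) should provide the matching lower bound $\P_x(T_A<T_x)\gtrsim \dist(x,A)^{1-\alpha}h(\dist(x,A))$. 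So the gluing formula yields
\[
  \mu(x,a) = \frac{p_{x,a}\,\P_\infty(T_x<T_A)}{\P_x(T_A<T_x)}
  \lesssim p_{x,a}\,\dist(x,A)^{\alpha-1}\,h(\dist(x,A))^{-1}
  \le p_{x,a}\,|x-a|^{\alpha-1+\eps},
\]
using that a slowly varying $h$ satisfies $h(k)^{-1}\lesssim k^\eps$ and $\dist(x,A)\le|x-a|$.

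Next I would sum over all $x$ with $\dist(x,A)>m$ and all $a\in A$. Since $p_{x,a}=\P(\xi=a-x)$ and $\P(\xi>t)\asymp t^{-\alpha}$ gives $p_{x,a}\lesssim|x-a|^{-1-\alpha+\eps}$, the contribution from a fixed $a$ is
\[
  \sum_{|x-a|>m} |x-a|^{-1-\alpha+\eps}\,|x-a|^{\alpha-1+\eps}
  = \sum_{d>m} d^{-2+2\eps} \lesssim m^{-1+2\eps},
\]
and summing over the $|A|=n$ choices of $a$ gives
\[
  \P(\Delta D_n>m\mid\F_n) \le \sum_{a\in A}\sum_{x:\dist(x,A)>m}\mu(x,a)
  \lesssim n\,m^{-1+2\eps}.
\]
Since $\eps>0$ is arbitrary this is exactly $n/m^{1-o(1)}$. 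One technical point: the sum should really be organized via $\P(\xi\le -d-W(a))$-type tail terms as in the proof of \lemref{few_jumps} rather than pointwise $p_{x,a}$ bounds, to avoid losing a logarithm; using Abel summation on $\sum_d d^{\alpha-1+\eps}\P(|\xi|=d+W(a))$ against the tail bound $\P(|\xi|>t)\lesssim t^{-\alpha+\eps}$ gives the same $m^{-1+O(\eps)}$ without fuss.

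The main obstacle I expect is the lower bound on $\P_x(T_A<T_x)$ — equivalently the \emph{upper} bound $g_{\{a_0\}}(x,x)\lesssim \dist(x,a_0)^{\alpha-1}h(\dist(x,a_0))$ on the Green function of the walk killed at the single point $a_0$. In \lemref{alpha12_tools} this was obtained (for an interval $I=[-n,0]$) from the identity \eqref{eq:hashpitz} together with the harmonic-measure bound $g(k,\infty)\le\tfrac12 g(k,k)$; the same argument applies verbatim with $I$ replaced by $\{a_0\}$, since symmetry still forces the harmonic measure of $k$ in $\{a_0,k\}$ to be at most $1/2$ and $a(k-a_0)\asymp|k-a_0|^{\alpha-1}$ by \lemref{stable}. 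So really this reduces to invoking the already-proven machinery with a one-point set, and the upper-bound claim follows. The only other care needed is keeping track of where $D_n$ enters: on $\{D_n<m\}$ every $x$ with $\dist(x,A)>m$ automatically satisfies $x\ge\max A+\diam A$ (up to constants, after possibly replacing $m$ by $2m$), so \corref{a12_hit} genuinely applies.
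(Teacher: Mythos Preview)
Your approach is essentially the paper's: lower-bound $\P_x(T_A<T_x)$ by reducing to a single point of $A$, then Abel-sum against the tail of $\xi$. Two remarks. First, rather than re-running the proof of \lemref{alpha12_tools} for a one-point set, the paper simply cites the exact identity $\P_x(T_y<T_x)=(2a(x-y))^{-1}$ from \cite[P11.5 or T30.2]{S76}; combined with \lemref{stable} this gives $\mu(x,a)\lesssim p_{x,a}\,a(x-a)\le p_{x,a}|x-a|^{\alpha-1+\eps}$ in one line, with $y=a$ the very point you are gluing to. Second, your pointwise bound $p_{x,a}\lesssim|x-a|^{-1-\alpha+\eps}$ is \emph{not} justified by the hypothesis $\P(\xi>t)\asymp t^{-\alpha}$ alone (take $\xi$ supported on powers of $2$), so the Abel summation you mention at the end is not a refinement but a necessity --- and it is exactly what the paper does. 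Finally, the remark that ``the case $D_n\ge m$ is trivial'' is confused: the claim bounds $\Delta D_n$, not $D_{n+1}$, so no case split on $D_n$ is needed, and indeed the single-point bound $\P_x(T_a<T_x)=(2a(x-a))^{-1}$ requires nothing about $\diam A$, so you need not worry about the applicability condition of \corref{a12_hit} at all.
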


\begin{proof}
  Set $A=A_n$, and consider $x$ outside the convex hull of $A$ (so that its
  addition will increase the diameter. We have $\P_x(T_A<T_x) \geq \P_x(T_y
  < T_x)$, where $y$ is an arbitrary point in $A$. By \cite[P11.5 or
  T30.2]{S76} we have $\P_x(T_y<T_x) = (2a(x-y))^{-1}$. Using this in the
  gluing formula \eqref{eq:murecurr} gives
  \begin{align*}
    \P(\Delta D_n>m|\F_n)
    & = \sum_{y\in A}\sum_{x:\dist(x,A)>m}
                \frac{p_{x,y} \P_\infty(T_x<T_A)}{\P_x(T_A<T_x)} \\
    & \lesssim \sum_{y\in A} \sum_{|x-y|>m}
                p_{x,y} a(x-y).
  \end{align*}
  Now, by \lemref{stable}, $a(x-y) \leq |x-y|^{\alpha-1} h(|x-y|)$ for some
  slowly varying $h$, hence $a(x-y) \leq C|x-y|^{\alpha-1+\eps}$, for some
  $C=C(\eps)$. This yields
  \begin{align}
    \P(\Delta D_n>m|\F_n)
    & \lesssim \sum_{y\in A} \sum_{|x-y|>m} p_{x,y} |x-y|^{\alpha-1+\eps}
    \nonumber \\
    & \lesssim n \sum_{k>m} k^{\alpha-1+\eps} \P(\xi=k).  \label{eq:a12_ub}
  \end{align}
  (All $y$'s give the same contribution, and there are two $x$'s at any
  distance $k$.) To estimate the last sum, we use the following Abel-type
  summation formula: Suppose $\{a_n\},\{b_n\}$ are two sequences, such that
  $\{b_n\}$ is summable, and $a_n B_{n+1} \to 0$, where
  $B_s=\sum_{k=s}^\infty b_k$, then
  \[
  \sum_{n\ge m} a_n b_n = a_m B_m + \sum_{n>m} (a_n-a_{n-1}) B_n.
  \]
  (Restricting the sums to $n\leq N$ gives a discrepancy of $a_N B_{N+1}$,
  so if one series converges so does the other and the identity holds.)
  Setting $a_n = n^{\alpha-1+\eps}$ and $b_n=\P(\xi=n)$, we get
  \begin{align*}
    \sum_{k\ge m} k^{\alpha-1+\eps} \P(\xi=k)
    & = m^{\alpha-1+\eps} \P(\xi \ge m) + \sum_{k>m}
    \big(k^{\alpha-1+\eps}-(k-1)^{\alpha-1+\eps}\big) \P(\xi \ge k) \\
    &\leq m^{\alpha-1+\eps} \P(\xi \geq m)
     + C \sum_{k>m} k^{\alpha-2+\eps} \P(\xi \geq k) \\
    &\leq C m^{\alpha-1+\eps} m^{-\alpha+\eps}
     + \sum_{k>m} ck^{\alpha-2+\eps} k^{-\alpha+\eps} \\
    &\leq Cm^{2\eps-1}.
  \end{align*}
  The penultimate inequality follows from the conditions on $\xi$ together
  with the fact that a slowly varying function grows slower than any power.
  Combining this with \eqref{eq:a12_ub}, we get
  \[
  \P(\Delta D_n>m | \F_n) \leq \frac{Cn}{m^{1-2\eps}}
  \]
  Since $\eps$ is arbitrary, the proof is complete.
\end{proof}

\begin{proof}[Proof of \thmref{12_both} (upper bound)]
  By \clmref{alpha12_delta_small}, $\P(\Delta D_n>m |\F_n) \le
  nm^{-1-o(1)}$. By the second part of \lemref{general_upper}, this implies
  that $\P(D_n\ge \gamma \phi(n))\lesssim 1/\gamma$ for any $n$ and
  $\gamma$, with $\phi(n)=n^{2+o(1)}$. Set $\gamma = \log^{2} n$ and
  consider only the geometric sequence $n=2^k$. It follows by
  Borel-Cantelli that a.s.\ $D_n \le \phi(n)\log^2 n=n^{2+o(1)}$ for all
  large enough $n=2^k$, and by monotonicity this holds for any $n$, as
  needed.
\end{proof}

\section{Hyper-transient: the restricted $\Z^3$ walk}
\label{sec:Z3}

We consider here the restriction to $\Z$ of the simple random walk
on $\Z^3$, where $\Z$ is embedded in $\Z^3$, say as $\Z\times
\{(0,0)\}$. Because simple random walk on $\Z^3$ is transient,
so is our induced process on $\Z$. This means that the gluing
formula (\ref{eq:murecurr}) is no longer valid, nor is our definition
of DLA. Let us therefore start by stating the analog of $\mu(x,a)$ in
the transient case. For a set $A$ and an element $x$ (possibly in $A$)
we define the escape probability $E_A(x)$ and the capacity $\capa(A)$ by
\[
E_A(x)=\P_x(T_A=\infty)\qquad\capa(A)=\sum_{a\in A}E_A(a)\,.
\]
Now define the transient gluing measure by
\begin{equation}\label{eq:transmu}
\mu(x,a)=\mu(x,a;A)=\frac{p_{x,a}E_A(x)}{\capa(A)}\,.
\end{equation}
In part II we explain why \eqref{eq:transmu} is the natural analog of
\eqref{eq:murecurr} in the transient settings, but for now we take it as
the definition (note that in part II, \eqref{eq:transmu} contains
$E_A^*(x)$, the escape probabilities for the reversed walk, but here our
walk is symmetric). With $\mu(x,a)$ defined the aggregate is defined
exactly as in Definition \ref{def:DLA}. We keep the notations of $R$ and
$\xi$ for the walk, $A_n$ for the aggregate and $D_n=\diam A_n$.

We now consider the specific case of the $\Z^3$ restricted walk. It turns
out that we only need the following property

\begin{defn}
  A random walk on $\Z$ is said to be {\em log-avoiding} if for some $c>0$
  and any finite $A\subset\Z$, and any $x$ we have $E_A(x) \ge
  \frac{c}{\log |A|}$.
\end{defn}

\begin{prop}
  The restricted $\Z^3$ random walk is log-avoiding.
\end{prop}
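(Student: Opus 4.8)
The plan is to transfer everything to $\Z^3$. Fix the embedding $\Z\hookrightarrow\Z^3$ as the line $\ell=\Z\times\{(0,0)\}$, and for $A\subset\Z$, $x\in\Z$ write $\tilde A=A\times\{(0,0)\}$ and $\tilde x=(x,0,0)$. Since by definition the restricted walk is the trace on $\ell$ of simple random walk $R$ on $\Z^3$, the event that the restricted walk ever returns to $A$ is exactly $\{T^{R}_{\tilde A}<\infty\}$; hence $E_A(x)=\P_{\tilde x}(T_{\tilde A}=\infty)$, the escape probability of $\Z^3$ walk from the collinear set $\tilde A$. It is enough to produce $c>0$ with $E_A(x)\ge c/\log(|A|+2)$, which is what is wanted in the application. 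Write $N=|A|$, let $G$ denote the Green function of simple random walk on $\Z^3$, fix $C$ with $G(u,v)\le C/(1+|u-v|)$, and set $R=\lceil 2CN\rceil$.

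First I would record a one-line consequence of the $\Z^3$ Green function bound: if $v\in\Z^3$ has transverse distance (the Euclidean norm of its last two coordinates) at least $R$, then $v\notin\tilde A$ and $\P_v(T_a<\infty)=G(v,a)/G(0,0)\le C/R$ for every $a\in\tilde A$, so a union bound over the $N$ points of $\tilde A$ gives $\P_v(T_{\tilde A}<\infty)\le CN/R\le\tfrac12$, i.e.\ $\P_v(T_{\tilde A}=\infty)\ge\tfrac12$. The point is that this holds no matter how widely $\tilde A$ is spread along $\ell$: the estimate uses only $|A|$, never $\diam A$.

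The main step is to show that, started from $\tilde x\in\ell$, the walk reaches transverse distance $R$ \emph{before ever returning to $\ell$}, with probability at least $c_2/\log R$. Here I would use that the transverse coordinates $(Y_2,Y_3)$ of $R$ perform a lazy simple random walk on $\Z^2$ (they stay put on a $\pm e_1$ step of $R$, and move as a $\Z^2$ walk otherwise), and that $R\in\ell$ exactly when $(Y_2,Y_3)=(0,0)$. With probability $4/6$ the first step of $R$ is transverse, after which $(Y_2,Y_3)$ sits at distance $1$ from the origin and $R$ is off $\ell$; the event in question is then precisely that this $\Z^2$ walk reaches $\partial B_R$ before hitting the origin, which by the classical two-dimensional estimate (via the potential kernel $a(\cdot)\sim\tfrac{2}{\pi}\log|\cdot|$, laziness being irrelevant to this hitting event) has probability at least $c_2/\log R$. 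While $R$ is off $\ell$ it cannot lie in $\tilde A\subset\ell$, so on this event it has avoided $\tilde A$ up to the stopping time $\tau$ at which transverse distance $R$ is first reached; note this also covers the case $x\in A$, since $T_{\tilde A}$ counts times $n>0$ only and the very first step already leaves $\ell$. Applying the strong Markov property at $\tau$ together with the previous paragraph,
\[
 E_A(x)=\P_{\tilde x}(T_{\tilde A}=\infty)\ \ge\ \tfrac46\cdot\frac{c_2}{\log R}\cdot\tfrac12\ \ge\ \frac{c}{\log(N+2)},
\]
which is the claim.

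I do not expect a genuine obstacle here. The only mildly delicate ingredients are standard: the two-dimensional estimate ``reach $\partial B_R$ before the origin with probability $\asymp 1/\log R$'' and the (routine) observation that the laziness of the transverse walk does not affect it, together with the classical $\Z^3$ Green function asymptotics. The one idea that needs to be gotten right is the choice $R\asymp N$ combined with the union bound of the second paragraph: this is exactly what decouples the argument from $\diam A$ and makes the ``escape transversally far, then drift off to infinity'' strategy work uniformly over all collinear sets $\tilde A$ with $|A|=N$.
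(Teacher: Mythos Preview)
Your argument is correct and is essentially the same as the paper's: escape transversally to a cylinder boundary before returning to the line (the $1/\log$ two-dimensional estimate), then use the $\Z^3$ Green function and a union bound over $|A|$ points to escape to infinity with probability bounded away from zero. The only cosmetic difference is that the paper takes the cylinder radius to be $|A|^2$ rather than your $R\asymp |A|$, which changes nothing up to constants.
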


\begin{proof}
  Since $\Z$ is embedded in $\Z^3$, take a cylinder of radius $|A|^2$
  around $\Z$, and let $B$ be the vertex boundary of the cylinder. Since
  the random walk projected orthogonally to the embedded copy of $\Z$ is a
  two dimensional random walk, for any $x\in\Z$ we have $\P_x(T_B<T_\Z) \ge
  \frac{c}{\log |A|}$ (see e.g.~\cite[Lemma 9]{BKYY}).

  The probability in $\Z^3$ of ever hitting a point at distance $d$ is or
  order $c/d$ (see \cite[T26.1]{S76} or \cite[Theorem 4.3.1]{LL-book}).
  Thus for any point in $B$, the probability of ever hitting some point of
  $A$ is at most $c/|A|$. Combining the two,
  \[
  E_A(x) \ge \P_x(T_B<T_\Z) \P_B(T_A=\infty)
         \ge \frac{c}{\log |A|}(1-c/|A|) \ge \frac{c'}{\log |A|}\,.\qedhere
  \]
\end{proof}

With this property, we have super-exponential growth:

\begin{thm}
  If $R$ be a log-avoiding random walk, then a.s.\ for any $C$, $D_n > C^n$
  infinitely often.
\end{thm}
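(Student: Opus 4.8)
The plan is to show that from any configuration $A_n$, there is a probability bounded below (by a constant not depending on $A_n$) that the next particle is glued very far away, so that $D_{n+1} > C \cdot D_n$ (or even $D_{n+1}$ exceeds any prescribed value). Then a Borel--Cantelli / martingale-type argument, uniform in $\F_n$, forces $D_n > C^n$ infinitely often. Concretely, I would fix $n$ and a large target $R$, and look for a point $x$ with $\dist(x, A_n)$ of order $R$ whose gluing probability $\mu(x;A_n) = \sum_{a\in A_n} \mu(x,a;A_n)$ is bounded below by a universal constant.

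The key estimate uses the transient gluing formula \eqref{eq:transmu}: $\mu(x,a) = p_{x,a} E_{A_n}(x)/\capa(A_n)$. The log-avoiding property gives $E_{A_n}(x) \ge c/\log|A_n| = c/\log n$ for every $x$. For the capacity, I would note that $\capa(A_n) \le C \log|A_n| = C\log n$: indeed each $E_{A_n}(a)\le 1$ but more is true --- for the $\Z^3$-restricted walk (or any log-avoiding walk with the same upper bound, which one gets by the same cylinder argument as in the Proposition, since a point of $A_n$ escapes $A_n$ essentially by first escaping the $|A_n|$-neighborhood, an event of probability $\lesssim 1/\log|A_n|$). So $\mu(x;A_n) = E_{A_n}(x) p(x,A_n)/\capa(A_n) \gtrsim p(x,A_n)/\log^2 n$. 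Now I sum over all $x$ with $\dist(x,A_n) \in (R, 2R]$: since the walk has steps with $\alpha = 0$ (the $\Z^3$-restricted walk has $\P(\xi = k) \asymp |k|^{-3}$, wait --- actually it has all moments infinite? no: $\P(\xi=k)\approx |k|^{-2}$ in $\Z^2$, and in $\Z^3$ the return structure gives a heavier tail; the relevant fact is only that $\sum_{k>R}\P(|\xi|>k)$ or rather $\P(|\xi|>R)$ is not too small). The cleanest route: $\sum_{x:\,\dist(x,A_n)>R}\mu(x;A_n) \gtrsim \frac{1}{\log^2 n}\sum_{a\in A_n}\P(|\xi|>R+\diam A_n) \gtrsim \frac{n}{\log^2 n}\P(|\xi| > R + D_n)$. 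For the $\Z^3$-restricted walk one has $\P(|\xi|>t) \gtrsim 1/\log t$ (this is the point where the specific walk matters, and it should follow from the fact that a simple random walk in $\Z^3$ started on the line, conditioned to return to the line, has returned at a point whose distance has a tail of order $1/\log$; alternatively it is where one invokes $\alpha = 0$). Hence, on the event $D_n < R$, $\P(\Delta D_n > R \mid \F_n) \gtrsim \frac{n}{\log^2 n \cdot \log R}$.

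Taking $R = R_n$ growing like a tower / like $C^n$ times the current diameter, this lower bound is still (roughly) $1/\mathrm{polylog}$, in particular not summable, and crucially \emph{uniform in} $A_n$. Therefore the events $\{\Delta D_n > C\cdot 2^{n}\}$ (say), which are each controlled from below by a fixed-in-$\F_n$ probability $p_n$ with $\sum p_n = \infty$, stochastically dominate independent events; the second Borel--Cantelli lemma (in the conditional form, as used in the proofs of \lemref{general_lower} and \thmref{23_lower}) gives that infinitely often $D_{n+1} > C\cdot 2^n$, and since $D_n$ is non-decreasing and the statement only asks for ``$D_n > C^n$ infinitely often'' for every fixed $C$, a diagonalization over $C\in\N$ finishes it.

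The main obstacle I anticipate is the lower bound on the step tail $\P(|\xi| > t)$ for the $\Z^3$-restricted walk --- i.e.\ quantifying how heavy the tail of a single induced step is. Everything else (the capacity upper bound, the use of log-avoidance, the Borel--Cantelli mechanics) is routine and parallels arguments already in the paper; but one genuinely needs to know that a long jump of size $> t$ happens with probability at least (something like) $c/\log t$, or at any rate with probability decaying slower than any power, to beat the extra $\log^2 n$ loss from the capacity and push $R_n$ up to super-exponential size. I would isolate this as a preliminary estimate on the $\Z^3$-restricted walk (the heavy-tailedness being precisely the ``$\alpha = 0$'' phenomenon advertised in the introduction), prove it by a last-exit decomposition of the $\Z^3$ walk relative to the line $\Z$ together with the $c/d$ hitting estimate $\cite[\text{T26.1}]{S76}$, and then the theorem follows.
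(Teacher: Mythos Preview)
Your overall strategy---lower-bound $\mu(x,a)$ via \eqref{eq:transmu}, sum over far $x$, then invoke conditional Borel--Cantelli---is exactly the paper's. Two points need correction.

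\textbf{The capacity bound is wrong.} You claim $\capa(A_n)\le C\log n$, arguing that ``a point of $A_n$ escapes $A_n$ essentially by first escaping the $|A_n|$-neighborhood, an event of probability $\lesssim 1/\log|A_n|$''. This reverses the direction of the log-avoiding inequality (which is a \emph{lower} bound on $E_A$), and the claim is in fact false: for the $\Z^3$-restricted walk, $E_{\{a\}}(a)$ equals the escape probability of SRW on $\Z^3$, a positive constant, so a well-separated $n$-point set has capacity of order $n$. The paper simply uses the trivial $\capa(A_n)\le n$. With that, $\mu(x,a)\ge c\,p_{x,a}/(n\log n)$, and after summing over $a\in A_n$ and $x$ with $\dist(x,A_n)>m$ one gets $\P(D_{n+1}>m\mid\F_n)\gtrsim \P(|\xi|>m)/\log n$. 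For $m=C^n$ this is $\gtrsim 1/(n\log n)$, not $1/\mathrm{polylog}(n)$ as you assert, but the sum still diverges and the argument goes through.

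\textbf{The tail bound is free.} You single out $\P(|\xi|>t)\gtrsim 1/\log t$ as the ``main obstacle'', to be proved for the $\Z^3$-restricted walk via a last-exit decomposition. In fact it follows immediately from the log-avoiding hypothesis itself, for \emph{any} log-avoiding walk: to have $T_{[-m,m]}=\infty$ starting from $0$, the very first step must exit $[-m,m]$, so
\[
\P(|\xi|>m)\ \ge\ E_{[-m,m]}(0)\ \ge\ \frac{c}{\log(2m+1)}.
\]
This is the paper's observation, and it is what makes the theorem hold for all log-avoiding walks rather than only for the $\Z^3$-restricted one.
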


\begin{proof}
  For any transient walk $\capa(A_n)\le n$, and by log-avoidance $E_A(x) \ge
  \frac{c}{\log n}$ and putting this into \eqref{eq:transmu} gives
  \begin{equation}\label{eq:Z3_add_x}
    \mu(x,a) \ge \frac{cp_{x,a}}{n\log n},
  \end{equation}
  Now, gluing any $x$ with $|x-a|>m$ will imply $D_{n+1}>m$, and
  so since there are $n$ possible $a$'s,
  \[
    \P(D_{n+1} > m|\F_n) \ge n\frac{c\P(\xi>m)}{n\log n}\,.
  \]
  Next we note that for any log-avoiding random walk with step $\xi$
  we have $\P(\xi>m)\ge \frac{c}{\log m}$, since this is $\ge$
  probability of escaping the interval $[-m,m]$. Therefore
  \[
    \P(D_{n+1} > m|\F_n) \ge \frac{c}{\log m\log n}.
  \]
  Taking $m=C^n$ we see that a.s.\ $D_{n+1}>C^n$ for infinitely many $n$.
\end{proof}

In light of this very fast growth, the following result is somewhat
surprising.

\begin{thm}\label{T:fill_all}
  If $R$ be a log-avoiding aperiodic random walk, then a.s.\ $A_\infty =
  \Z$ (where $A_\infty = \bigcup A_n$.)
\end{thm}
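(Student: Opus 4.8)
The plan is to show that every site $x \in \Z$ is eventually added to the aggregate. Fix $x$; it suffices to show that a.s. $x \in A_n$ for some $n$, since a countable union of a.s. events is a.s. The natural approach is a Borel--Cantelli argument: at each step $n$ (conditioned on $x \notin A_n$), bound from below the probability that $x$ is added to $A_{n+1}$, and show these probabilities, while tiny, sum to infinity so that $x$ is added almost surely.

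First I would write, using the transient gluing measure \eqref{eq:transmu} and log-avoidance, that for $x \notin A_n$
\[
\P(x \in A_{n+1} \mid \F_n) = \sum_{a \in A_n} \mu(x,a;A_n) = \frac{E_{A_n}(x)}{\capa(A_n)} \sum_{a \in A_n} p_{x,a} \ge \frac{c}{n \log n}\, p(x,A_n),
\]
using $\capa(A_n) \le n$ and $E_{A_n}(x) \ge c/\log n$. The remaining issue is to lower-bound $p(x,A_n) = \sum_{a \in A_n} p_{x,a}$, the one-step probability of jumping from $x$ directly into the current aggregate. Since $A_n \supseteq \{0\}$ always, one crude bound is $p(x,A_n) \ge p_{x,0} = \P(\xi = -x) > 0$ by aperiodicity, giving $\P(x \in A_{n+1} \mid \F_n) \ge c_x/(n\log n)$. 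Then $\sum_n c_x/(n \log n) = \infty$, and since this bound is uniform in $\F_n$ on the event $\{x \notin A_n\}$, a conditional Borel--Cantelli argument (e.g. L\'evy's extension, applied to the events $\{x \in A_{n+1}\} \cap \{x \notin A_n\}$, or more simply comparing to independent events dominated from below) shows that a.s. $x$ is eventually added.

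The main obstacle is making the conditional Borel--Cantelli step rigorous: the events ``$x$ added at step $n$'' are not independent and are only defined on the event that $x$ has not yet been added. The clean way is to define, for the purpose of the argument, auxiliary independent Bernoulli variables $Z_n$ with $\P(Z_n = 1) = c_x/(n\log n)$, coupled so that whenever $x \notin A_n$ and $Z_n = 1$ then $x \in A_{n+1}$; since $\sum \P(Z_n=1) = \infty$, a.s. infinitely many $Z_n = 1$, so $x$ must be added at the first such time after which it is still absent --- i.e. at the first $n$ with $Z_n = 1$ at the latest. Alternatively one invokes the extended (conditional) Borel--Cantelli lemma directly: if $\sum_n \P(B_{n+1} \mid \F_n) = \infty$ with $B_{n+1} \in \F_{n+1}$, then $\P(B_{n+1} \text{ i.o.}) = 1$; applying this to $B_{n+1} = \{x \in A_{n+1}\}$ and noting $\sum_n \P(x \in A_{n+1} \mid \F_n) \ge \sum_{n : x \notin A_n} c_x/(n\log n)$, which diverges unless $x$ is added at some finite time anyway, gives the claim. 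I would then remark that aperiodicity is exactly what guarantees $\P(\xi = -x) > 0$ for every $x$, which is why it appears in the hypothesis, and conclude by taking the intersection over all $x \in \Z$.
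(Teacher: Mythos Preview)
Your argument is essentially the paper's, but there is one genuine error. You assert that aperiodicity guarantees $\P(\xi=-x)>0$ for every $x$, and this is false. Aperiodicity (as defined in the paper) only says that for each pair $x,y$ there is \emph{some} $n$ with $\P_x(R_n=y)>0$; it does not say that every single step has positive probability. A lazy nearest-neighbour walk ($\P(\xi=0)=\tfrac12$, $\P(\xi=\pm1)=\tfrac14$) is aperiodic and could in principle be log-avoiding, yet $p_{x,0}=0$ whenever $|x|\ge 2$, so your bound $p(x,A_n)\ge p_{x,0}>0$ collapses.

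The paper patches exactly this point: it first proves the claim for those $x$ with $p_{0,x}>0$, getting $\mu(x,0)\ge c p_{0,x}/(n\log n)$ and concluding $x\in A_\infty$ a.s.\ by the divergent-sum / conditional Borel--Cantelli argument you describe. For general $x$, aperiodicity gives a finite chain $0=a_1,a_2,\dots,a_k=x$ with $p_{a_i,a_{i+1}}>0$; once $a_i\in A_n$, the same bound with $a_i$ in place of $0$ shows $a_{i+1}\in A_\infty$ a.s., and one inducts along the chain. Everything else in your write-up --- the estimate $\mu(x,a)\ge c p_{x,a}/(n\log n)$ from $\capa(A_n)\le n$ and log-avoidance, and the handling of the conditional Borel--Cantelli step --- is correct and matches the paper.
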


\begin{proof}
  Fix some point $x\in \Z$ with $p_{0,x}>0$. Taking $a=0$ in
  \eqref{eq:Z3_add_x} we get
  \[
     \mu(x,0) \ge \frac{cp_{0,x}}{n\log n}.
  \]
  It follows that a.s.\ $x\in A_\infty$. If $p_{0,x}=0$ we use the
  aperiodicity of the walk to find $0=a_1,a_2,\dotsc,a_k=x$ such that
  $p_{a_i,a_j}>0$. Since the same argument works with
  any $a_i\in A_n$ in place of 0, we can show inductively that
  a.s.~all $a_i$ are in $A_\infty$, and in particular $x$.
\end{proof}

Let us conclude by a related conjecture

\begin{conj*}
  For any transient random walk on $\Z$, $\capa(A_n)=o(n)$ a.s.
\end{conj*}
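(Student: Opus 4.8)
The plan is to show that with probability one the capacity of the aggregate grows sublinearly, by exploiting the fact that a positive density of the particles get ``buried'' deep inside the aggregate and thus contribute a negligible amount to $\capa(A_n)$. Recall that $\capa(A_n)=\sum_{a\in A_n}E_{A_n}(a)$, and that adding a point only decreases each escape probability, so $\capa$ is subadditive under adding points. The quantity $E_{A_n}(a)$ is the probability that a walk from $a$ never returns to $A_n$; if $a$ has many aggregate points on both sides within a bounded window, this probability is small. The key point is a dichotomy: either the aggregate is ``spread out'' (so $D_n$ is huge, but then the few extremal points carry all the capacity and the interior is dense), or it is not, in which case explicitly many points are surrounded.

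First I would establish a deterministic capacity bound of the form $\capa(A) \lesssim (\log \diam A) \cdot |\{\text{``exposed'' points of }A\}| + (\text{small correction})$, or more usefully, that if a point $a\in A$ has at least $N$ points of $A$ within distance $k$ on \emph{each} side, then $E_A(a) \le \varphi(N,k)$ for some explicit $\varphi\to 0$; this should follow from the transience estimate (the walk from $a$ hits a nearby point of $A$ with probability close to $1$ when surrounded, since on $\Z$ embedded in $\Z^3$ a bounded-range excursion returns to the line with probability bounded below, and then hits one of the $2N$ nearby occupied sites). Second, using the gluing measure \eqref{eq:transmu} together with $\capa(A_n)\le n$ and log-avoidance, I would show that a positive-probability fraction of newly glued points land at distance $O(1)$ from an existing point — indeed, from \eqref{eq:Z3_add_x}, $\mu(x,a)\ge cp_{x,a}/(n\log n)$, and summing over $a\in A_n$ and the bounded-range steps gives that each particle has probability $\gtrsim 1/\log n$ of being glued within bounded distance of the aggregate. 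Since $\sum 1/\log n$ diverges, infinitely many such ``close'' gluings occur; with a little more care (a second-moment or Borel–Cantelli argument along a subsequence), one gets that the number of points within distance $k$ of other aggregate points grows without bound, and in fact clusters of surrounded points form.

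Third, and this is where the real work lies, I would need to convert ``many close gluings'' into ``many deeply-buried points,'' i.e. points surrounded on both sides. The obstacle is that close gluings could all pile up near the two extreme points (which grow super-exponentially fast), leaving the bulk of $[-n,n]$ empty; in that scenario the buried points are few and $\capa(A_n)$ could still be order $n$. To rule this out I would argue that once a region contains two aggregate points, the gluing measure strongly favors filling the gap between them — a walk started far away that comes near this region is very likely to be absorbed at one of the two points or in the interval between them, by the same $\Z^3$-on-$\Z$ hitting estimates used in \thmref{fill_all}. Thus occupied regions tend to densify locally, and the number of surrounded points is comparable to the number of occupied sites in any bounded-diameter cluster. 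Combining: for any $\eps>0$, eventually at least $(1-\eps)n$ of the points are surrounded deeply enough that each contributes at most $\eps/n \cdot n = \eps$ (choosing the burial depth as a slowly growing function of $n$), while the $\eps n$ remaining points contribute at most $\eps n$ by the trivial bound $E\le 1$; hence $\capa(A_n)\le 2\eps n$ eventually, giving $\capa(A_n)=o(n)$ a.s.

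I expect the main difficulty to be the third step: controlling the spatial distribution of the aggregate well enough to guarantee that a vanishing fraction of points remain ``exposed.'' The super-exponential growth of $D_n$ means the aggregate is extremely sparse on its own scale, so one cannot argue by density in $[-D_n,D_n]$; instead one must track the finer structure showing that occupied sites come in increasingly dense local clusters. A clean way to organize this may be to define, for each scale, the set of ``isolated'' occupied points (no neighbor within distance $k_n$) and show its size is $o(n)$ using the gap-filling bias of $\mu$, perhaps via a supermartingale argument on the number of isolated points. The remaining estimates — the deterministic capacity bound and the escape-probability estimate for surrounded points — are routine given \cite[T26.1]{S76} and \cite[Lemma~9]{BKYY}.
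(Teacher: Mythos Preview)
This statement is a \emph{conjecture} in the paper, not a theorem: the paper does not prove it. What the paper offers is a one-line heuristic (immediately following the conjecture): if $\capa(A_n)$ grew linearly, then the argument of \thmref{fill_all} would force $A_\infty=\Z$, which ``morally'' means $A_n$ is clumped into small intervals, contradicting linear capacity. That is all --- there is no proof to compare against.

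Your proposal is therefore an attempt at an open problem, and it should be read as such. A few remarks on the approach itself. First, the conjecture is stated for \emph{any} transient walk on $\Z$, whereas your argument leans on log-avoidance (in step~2) and on $\Z^3$-specific hitting estimates (in steps~1 and~3); at best you are addressing a special case. Second, step~2 as written only yields infinitely many close gluings (indeed, \thmref{fill_all} already gives $A_\infty=\Z$, which is stronger), but for $\capa(A_n)=o(n)$ you need $(1-o(1))n$ of the points to be deeply buried, not merely unboundedly many --- a divergent $\sum 1/\log n$ does not produce a positive density. Third, you correctly flag step~3 as the crux, and it is: the heuristic ``occupied regions tend to densify locally'' is exactly the informal picture the paper sketches, but turning it into a quantitative statement that the number of exposed points is $o(n)$ is the entire content of the conjecture. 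Your proposed supermartingale on isolated points is a reasonable line to try, but nothing in the paper (or in your outline) supplies the drift estimate needed to make it work. In short: the strategy mirrors the paper's own intuition, but the hard step remains genuinely open.
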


Our basis for this conjecture is similar to the argument for
\thmref{fill_all}: If the capacity grows linearly, then $A_\infty=\Z$ which
(morally) implies that $A_n$ should have particles clumped into small
intervals. However, that contradicts the assumption of linear capacity.

If this conjecture holds then one may show that $D_n=o(n^{1/\alpha})$ for
any $0<\alpha<\frac{1}{2}$, so the aggregate does not grow in a precisely
polynomial fashion, but rather has some sub-polynomial correction.

\bibliographystyle{alpha}
\bibliography{DLA}

\noindent
{\bf Gideon Amir} \\
Department of Mathematics, University of Toronto\\
Toronto ON, M5S 2E4, Canada \\
{\sc gidi.amir@gmail.com}\\

\noindent
{\bf Omer Angel} \\
Department of Mathematics, University of British Columbia\\
Vancouver BC, V6T 1Z2, Canada\\
{\sc angel@math.ubc.ca} \\

\noindent
{\bf Itai Benjamini, Gady Kozma} \\
Department of Mathematics, Weizmann Institute of Science \\
Rehovot 76100, Israel \\
{\sc itai.benjamini@gmail.com, gady.kozma@weizmann.ac.il}

\end{document}